\documentclass[11 pt]{amsart}
\usepackage{graphicx} 
\usepackage{amsmath}
\usepackage{mathtools}
\usepackage{comment}
\usepackage{amstext, amscd, setspace, tikz-cd,mathtools, enumerate, graphics, latexsym, siunitx}
\usepackage{pst-node}
\usepackage{tikz-cd} 
\usepackage{amssymb}
\usepackage{hyperref}

\usepackage{PTSerif} 
\usepackage{graphicx}
\usepackage[cmtip,all]{xy}

\usepackage{relsize}
\usepackage[bbgreekl]{mathbbol}
\usepackage{amsfonts}

\DeclareSymbolFontAlphabet{\mathbb}{AMSb}
\DeclareSymbolFontAlphabet{\mathbbl}{bbold}

\newcommand{\Prism}{{\mathlarger{\mathbbl{\Delta}}}}

\newcommand{\cE}{{\mathcal E}}

\newcommand{\cI}{{\mathcal I }}

\newcommand{\cM}{{\mathcal M}}
\newcommand{\cN}{{\mathcal N }}
\newcommand{\cO}{{\mathcal O}}

\newcommand{\cT}{{\mathcal T }}

\newcommand{\cX}{{\mathcal X }}
\newcommand{\cY}{{\mathcal Y}}

\newcommand{\fN}{\mathfrak N}

\newcommand{\fM}{\mathfrak M}
\newcommand{\fS}{\mathfrak S}

\newcommand{\Q}{{\mathbb Q}}
\newcommand{\bL}{{\mathbb L}}

\newcommand{\Z}{{\mathbb Z}}
\newcommand{\N}{{\mathbb N}}
\newcommand{\C}{{\mathbb C}}

\newcommand{\Hom}{{\rm Hom}}

\newcommand{\Gal}{{\rm Gal~}}
\newcommand{\im}{{\rm image~}}

\input{xypic}
\xyoption{all}

\newtheorem{cor}[subsubsection]{{\bf Corollary}}
\newtheorem{defn}[subsubsection]{{\bf Definition}}

\newtheorem{lem}[subsubsection]{{\bf Lemma}}
\newtheorem{thm}[subsubsection]{{\bf Theorem}}

\newtheorem{prop}[subsubsection]{{\bf Proposition}}
\newtheorem{rem}[subsubsection]{{\bf Remark}}
\theoremstyle{definition}

\newtheorem{exa}[subsubsection]{{\bf Example}}

\numberwithin{equation}{section}

\title[Potential semistability of Finite height Galois representations]{Potential semistability of Finite height Galois representations: Relative case}

\author{Kaustabh Mondal}

\date{\today}
\subjclass[2020]{}

\keywords{Families of $p$-adic Galois representations, Relative integral $p$-adic Hodge theory and Prismatic $F$-crystals}

\address{Indian Institute of Science Education and Research, Pune, Dr.\,Homi Bhabha Road, Pashan, Pune 411008,  INDIA.}

\email{kaustabh.mondal@students.iiserpune.ac.in}

\begin{document}
\begin{abstract}
    Let $K$ be a $p$-adic field. We define the notion of finite height for an étale $\Z_p$-local system on a smooth adic space $\cX$ over $K$ with semistable reduction. Using analytic prismatic $F$-crystals and purity results of Du-Liu-Moon-Shimizu, we prove that if an étale $\Z_p$-local system over $\cX$ is of finite height then its pullback along a finite étale cover of $\cX$ is semistable. This answers a question of Tong Liu in relative setting.
\end{abstract}

\maketitle

\setcounter{tocdepth}{1}
\tableofcontents{}

\section{Introduction}\label{sec-1}

\subsection{Finite $E$-height $\Z_p$-local systems:}

Let $p$ be a fixed odd prime. Consider a finite extension $K$ of $\Q_p$ and its ring of integers $\cO_K$ with residue field $k$. Let us fix a uniformizer $\pi$ in $\cO_K$. The minimal polynomial of $\pi$ is an irreducible polynomial $E:=E(u)$ in $W(k)[u]$; where $W(k)$ is the ring of Witt vectors. Fix an algebraic closure $\overline{K}$ of $K$ and denote $\Gal_K:=\Gal(\overline{K}/K)$. A $p$-adic representation of $\Gal_K$ is a continuous representation on a finite dimensional $\Q_p$-vector space $V$ i.e. a continuous homomorphism $\rho : \Gal_K \rightarrow \text{Aut}_{\Q_p}(V)$. Note that $V$ contains a $\Gal_K$-stable $\Z_p$-lattice $T$. For convention, we call $T$ to be $\Z_p$-representation of $\Gal_K$. The goal of (algebraic) integral $p$-adic Hodge theory is the use of certain linear algebraic objects to classify $\Z_p$-lattices in various $p$-adic representations of $\Gal_K$ such as crystalline, semistable, de Rham representations etc. These properties are defined by admissibility formalism using Fontain's period rings $B_{\text{cris}}, B_{\text{st}}, B_{\text{dR}}$ etc. (see e.g. \cite{Fon94b})\smallskip

\noindent Another class of $p$-adic representation of $\Gal_K$, called \textit{finite $E$-height} representation,  has been studied by T. Liu in \cite{Liu}. We first describe some linear algebraic objects to define finite $E$-height Galois representations. Let $\fS_K=W(k)[[u]]$. It is equipped with Frobenius $\varphi$ defined by $\varphi(u)=u^p$ and the natural Frobenius on $W(k)$. 

\begin{defn}
A finite height Breuil-Kisin module over $\fS_K$ is a pair $(\fM_K, \varphi_{\fM_K})$; where $\fM_K$ is a finite free $\fS_K$-module and  $\varphi_{\fM_K}$ is a $\varphi$-semilinear endomorphism on $\fM_K$ such that the cokernel of the linearization $\varphi_{\fM_K}^\star=\mathrm{id} \otimes \varphi_{\fM_K} : \fS_K \otimes_{\varphi, \fS} \fM_K \rightarrow \fM_K$ is killed by some non-negative power of $E(u)$ i.e. $E^r  \fM_K \subset \text{Image}(\varphi_{\fM_K}^\star)$. More specifically, we call $(\fM_K, \varphi_{\fM_K})$ to be Breuil-Kisin module of $E$-height $\leq r$. We denote the category of Breuil-Kisin module of $E$-height $\leq r$ by $\textbf{Mod}^{r, fr}_{/ \fS_K}$.  
\end{defn}

\noindent Let us write $\C_p=\widehat{\overline{K}}$ and let $\cO_{\C_p}$ be its ring of integers. Let $\C_p^\flat$ be the tilt of $\C_p$ and $\cO_{\C_p^\flat}$ be its ring of integers. We fix once and for all a $p$-power compatible sequence $\{ \pi_n:= \pi^{1/p^n}\}$ of $\pi$. This gives an element $\pi^\flat \in \cO_{\C_p^\flat}$. We write $A_{\text{inf}} := W(\cO_{\C_p^\flat})$ and $[\pi^\flat] \in A_{\text{inf}}$. We embed $\fS \hookrightarrow A_{\text{inf}}$ by $u \mapsto [\pi^\flat]$, which is compatible with Frobenius endomorphisms. The fixed $p$-power compatible sequence $\{\pi_n\}$ gives rise to an arithmetic Kummer tower $K_{\infty}=\cup_n K(\pi_n)$. Let $\Gal_{K_{\infty}}=\Gal(\overline{K} / K_{\infty})$. To every Breuil-Kisin module $(\fM_K, \varphi_{\fM_K})$ in $\textbf{Mod}^{r, fr}_{/ \fS_K}$, we associate a $\Z_p[\Gal_{K_{\infty}}]$-module $T_{\fS_K}(\fM_K):=\Hom_{\fS_K, \varphi}(\fM_K, W(\C_p^\flat))$. It can be seen easily that $T_{\fS_K}(\fM_K)$ is finite free $\Z_p$-module and $\text{Rank}_{\Z_p}(T_{\fS_K}(\fM_K))=\text{Rank}_{\fS_K}(\fM_K)$. Now we are ready to define the finite $E$-height $p$-adic representation of $\Gal_K$.

\begin{defn}\label{fin-defn-classical}
A $p$-adic representation $V$ of $\Gal_K$ is said to be of finite $E$-height (w.r.t. our fixed choice $\overrightarrow{\pi}=\{\pi_n\}$) if there exists a $\Gal_K$-stable $\Z_p$-lattice $T$ inside $V$ and a finite height Breuil-Kisin module $(\fM_K, \varphi_{\fM_K})$ such that $T|_{\Gal_{K_{\infty}}} \cong T_{\fS_K} (\fM_K)$. 
\end{defn}

\begin{rem}
The above definition is following Def. 1.1.15 of \cite{Gao}. In \cite{Liu}, the finite $E$-height $p$-adic representation is defined by existence of $\Gal_{K_{\infty}}$-stable $\Z_p$-lattice $T$ and finite $E$-height Breuil-Kisin module $(\fM_K, \varphi_{\fM_K})$ such that $T|_{\Gal_{K_{\infty}}} \cong T_{\fS_K} (\fM_K)$. However, by proof of Lem. 2.1.15 in \cite{Kis}, if $V$ is of finite $E$-height representation (in the sense of Def. \ref{fin-defn-classical}) then any $\Gal_{K_{\infty}}$-stable $\Z_p$-lattice $T$ in $V$ arises from a finite height Breuil-Kisin module $\fN_K \in \textbf{Mod}^{r, fr}_{/ \fS_K}$. In particular, the two definitions are equivalent (see Rem. 2.1.2 in \cite{Liu}). 
\end{rem}

Liu asked the Ques. 4.3.1 (2) in \cite{Liu} that whether a finite $E$-height $p$-adic representation $V$ of $\Gal_K$ is potentially semistable. It has been answered by H. Gao in \cite{Gao} as the following result. 

\begin{thm}(Thm. 1.1.16 of \cite{Gao})\label{Gao-main-perf}
Let $K^{\text{ur}} \subset K$ be the maximal unramified extension of $K$ and let $m:= \text{max}\ \{ i \in \N \cup \{0\}: \text{a primitive $p^i$-th root of unity}\ \zeta_{p^i} \in K^{\text{ur}} \}$. Write $K_m = K(\pi_m)$. Let $T$ be a finite free $\Z_p$-representation of $\Gal_K$, and let $V=T[1/p]$. Then $T$ is of finite $E$-height if and only if $V|_{\Gal_{K_m}}$ can be extended to a semistable $\Gal_K$ representation with non-negative Hodge–Tate weights. In particular, if $T$ is of finite $E$-height, then $V$ is potentially semi-stable.
\end{thm}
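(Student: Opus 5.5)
The plan is to prove the two directions separately, with the main content in ``finite $E$-height $\Rightarrow$ extension to a semistable representation'' (the ``if'' direction should follow from Kisin's theory). For the easy direction, suppose $V|_{\Gal_{K_m}}$ extends to a semistable $\Gal_K$-representation $V'$ with non-negative Hodge--Tate weights. By Kisin's classification (Lem.~2.1.15 and the surrounding results in \cite{Kis}), every $\Gal_{K_\infty}$-stable lattice $T'$ in $V'$ comes from some $\fM_K\in \textbf{Mod}^{r,fr}_{/\fS_K}$, with $r$ bounded by the largest Hodge--Tate weight. Since $K_\infty\supset K_m$, we have $V|_{\Gal_{K_\infty}}=V'|_{\Gal_{K_\infty}}$. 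Taking $T'=T$ (which is $\Gal_{K_\infty}$-stable) gives $T|_{\Gal_{K_\infty}}\cong T_{\fS_K}(\fM_K)$, so $T$ is of finite $E$-height.

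For the hard direction, start from $\fM_K$ with $T|_{\Gal_{K_\infty}}\cong T_{\fS_K}(\fM_K)$, and base change to $\fM_{\inf}:=A_{\text{inf}}\otimes_{\fS_K}\fM_K$. The full $\Gal_K$-action on $T$ transports to a semilinear $\Gal_K$-action on $W(\C_p^\flat)\otimes_{\fS_K}\fM_K\cong W(\C_p^\flat)\otimes_{\Z_p} T^\vee$, commuting with $\varphi$. The first key step is to show that this action preserves $\fM_{\inf}$, or at least $\fM_{\inf}[1/p]$, so that $(\fM_K,\Gal_K\text{-action})$ resembles a $(\varphi,\hat G)$-module. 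I would do this by the standard overconvergence/finite-height argument: the $\varphi$-invariants computation with $E$-height $\le r$ shows that $T^\vee\otimes W(\C_p^\flat)$ and $\fM_{\inf}$ determine each other inside $W(\C_p^\flat)$, and that the $\Gal_K$-stable lattice $\fM_{\inf}[1/p]$ is characterized intrinsically, for example as the maximal finite-height $\varphi$-stable $A_{\text{inf}}[1/p]$-submodule.

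The second, decisive step is to show that the cocycle $\tau\mapsto\tau$ acting on $\fM_K$ (for $\tau$ a topological generator of $\Gal(K_{\infty}(\zeta_{p^\infty})/K(\zeta_{p^\infty}))$ restricted appropriately) satisfies Liu's condition $(\tau-1)\fM_K\subset u\,I_+\fM_{\inf}$ after restricting to $\Gal_{K_m}$. That condition forces semistability, by Liu's theory of $(\varphi,\hat G)$-modules. I expect this to be the main obstacle. My first attempt would be to use the action of $\Gal_K$ fixing $\pi_n$ only up to roots of unity. Since $\zeta_{p^i}\in K^{\text{ur}}$ for $i\le m$, unramified base change lets one replace $K$ by $K^{\text{ur}}$-type extensions and pass to $K_m$. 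There the cocycle $\tau(u)/u=[\underline\varepsilon]$ has $[\underline\varepsilon]-1\in I_+$. Combining this with the finite-height bound and the $\varphi$-equivariance, we iterate $\varphi$ to push $(\tau-1)\fM_K$ into $\varphi^n(\mathfrak t)$-divisible parts and get convergence of $\log\tau$, which should give a monodromy operator $N$. This would exhibit $V|_{\Gal_{K_m}}$ as the restriction of a semistable representation of $\Gal_{K_m}$.

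Finally, I would extend from $\Gal_{K_m}$ to $\Gal_K$. The resulting filtered $(\varphi,N)$-module $D$ over $K_0$ is defined using only $\fM_K$, $\varphi$ and $N$, all of which are defined over $\fS_K$, hence over $K$. By Colmez--Fontaine, weak admissibility of $D$ yields a semistable $\Gal_K$-representation $V'$, and $V'|_{\Gal_{K_m}}\cong V|_{\Gal_{K_m}}$ by comparing the two through $\fM_K$ over $\Gal_{K_\infty}$ and then using full faithfulness. The Hodge--Tate weights are non-negative because $\varphi^\star$ has cokernel killed by $E^r$. For the ``in particular'' statement: $V$ becomes semistable over the finite extension $K_m$, so $V$ is potentially semistable.
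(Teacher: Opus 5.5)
Your ``if'' direction is fine: apply Kisin's theorem together with Lem.~2.1.15 of \cite{Kis} to the $\Gal_{K_\infty}$-stable lattice $T\subset V|_{\Gal_{K_\infty}}\cong V'|_{\Gal_{K_\infty}}$, which is available because $K_m\subset K_\infty$. The ``only if'' direction, however, has a genuine gap. Steps 1 and 2 carry all of the content, and neither is established. The paper itself does not prove this statement. It quotes it from \cite{Gao}, and the remark after Theorem~\ref{mainthm-cdvf} says Gao's proof rests on the overconvergence of étale $(\varphi,\tau)$-modules, which never enters your sketch.

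In Step 1 the proposed intrinsic characterization is false. The module $A_{\text{inf}}\otimes_{\Z_p}T^\vee$ is $\Gal_K$-stable, $\varphi$-stable of height $0$, and contains $\fM_{\text{inf}}=A_{\text{inf}}\otimes_{\fS_K}\fM_K$, strictly unless $\fM_K$ is étale. So a maximal finite-height $\varphi$-stable submodule is not $\fM_{\text{inf}}[1/p]$. Finite-height $\varphi$-stable $A_{\text{inf}}$-lattices in $W(\C_p^\flat)\otimes_{\Z_p}T^\vee$ are plentiful: by Fargues' classification they correspond to $B_{\mathrm{dR}}^+$-lattices in $T^\vee\otimes_{\Z_p}B_{\mathrm{dR}}$. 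Knowing that $g(\fM_{\text{inf}})$ is another such lattice therefore tells you nothing. $\Gal_K$-stability of $\fM_{\text{inf}}$ is equivalent to $\Gal_K$-stability of the particular $B_{\mathrm{dR}}^+$-lattice determined by $\fM_K$. That is true, but it is non-formal; it is essentially Ozeki's result \cite{Oz} on weak $(\varphi,\widehat G)$-modules, recalled in the introduction.

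Step 2 is where the theorem lives, and you only offer a heuristic. The one concrete input you name, $\tau(u)/u=[\underline\varepsilon]$ with $[\underline\varepsilon]-1\in I_+$, holds for $\tau$ over $K$ itself. So nothing in the sketch explains why one must pass to $\tau^{p^m}$, and one must. Here is an example.

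\begin{itemize}
\item Suppose $m\ge 1$ and let $T=\Z_p[\mathrm{Hom}_K(K_m,\overline K)]$, the permutation module on the conjugates $\zeta\pi_m$ with $\zeta\in\mu_{p^m}$.
\item Since $\Gal_{K_\infty}$ fixes $\pi_m$, the map $\zeta\pi_m\mapsto\zeta$ identifies $T|_{\Gal_{K_\infty}}$ with the restriction of the unramified representation $\Z_p[\mu_{p^m}]$ (recall $\mu_{p^m}\subset K^{\mathrm{ur}}$). Hence $T$ has finite $E$-height, in fact height $0$.
\item But $T$ has finite image and nontrivial inertia action, so it is not semistable over $K$.
\item Here $\fM_{\text{inf}}=A_{\text{inf}}\otimes_{\Z_p}T^\vee$, and $\tau$ acts on $T$ by the cyclic shift $\zeta\pi_m\mapsto\zeta\zeta_{p^m}\pi_m$ of order $p^m$. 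The image of $\fM_K$ in $\fM_{\text{inf}}/W(\mathfrak m^\flat)\fM_{\text{inf}}$ is moved by $\tau$ but fixed by $\tau^{p^m}$.
\end{itemize}

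Moreover, the $\varphi$-iteration estimates that make $\log\tau$ converge (as in Liu's theory) take this residual triviality as input, so they cannot be used to derive it.

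Two things are missing:
\begin{enumerate}
\item[(i)] A genuine construction of $\log\tau$, and from it Kisin's $N_\nabla$ on $\fM_K$ over the open unit disc. This needs an overconvergent, analytic model of the $\tau$-action, which is exactly where Gao uses overconvergence of $(\varphi,\tau)$-modules.
\item[(ii)] Control of the residual $\tau$-action, showing it becomes trivial precisely after passing to $\tau^{p^m}$. This is where $\zeta_{p^m}\in K^{\mathrm{ur}}$ has to be used, and your appeal to unramified base change does not supply it.
\end{enumerate}

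Granting (i) and (ii), your Step 3 is fine. Weak admissibility comes from Kisin's theory, and the comparison over $\Gal_{K_m}$ comes from full faithfulness of restriction to $\Gal_{K_\infty}$ on semistable $\Gal_{K_m}$-representations.
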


The purpose of this paper is to answer  a relative version of Liu's question. Before stating our main Thm. \ref{main-thm-1} let us formulate finite $E$-height property in our relative setting. We follow \cite{DLMS}, \cite{DLMSII} to define relative aspects of semistable representations.\smallskip

\noindent We work with a smooth adic space $\cX$ over $K$ with semistable reduction. Let us restrict ourselves to a small affine case. Let $X=\mathrm{Spf}(R)$; where $R$ is connected $\cO_K$-algebra equipped with a $p$-adically complete étale map $\square : R^0 \rightarrow R$ with $$ R^0 = \cO_K \langle X_1, X_2, \cdots, X_r, X_{r+1}^{\pm}, X_{r+2}^{\pm}, \cdots X_{b}^{\pm} \rangle / (X_1.X_2. \dots X_r - \pi).$$
It is a semistable affine formal scheme over $\cO_K$. The generic fibre of $X$ gives a smooth locally noetherian affinoid adic space $\cX:=\mathrm{Spa}(R[1/p],R)$ over $K$.

\begin{defn}
Let $X_{\eta, \text{proét}}$ be the proétale site over $X_{\eta}$ in the sense of \cite{Sch13}. A $\Z_p$-local system on $\cX$ is a $p$-torsion free lisse $\widehat{\Z_p}$-sheaf on $X_{\eta, \text{proét}}$. The category of $\Z_p$-local systems on $\cX$ is denoted by $\textbf{Loc}_{\Z_p}(\cX)$.
\end{defn}

\noindent It is well-known (e.g by Thm. 4.2.3 of \cite{DL}) that the category of $\Z_p$-representations of $\Gal_K$ is equivalent to the category of $(\varphi, \tau)$-modules (see Def. \ref{defn-phitaumodule}). Over the recent years, prismatic cohomology and prismatic $F$-crystals, developed by Bhatt and Scholze in \cite{BS22} and \cite{BS23}, revolutionized the integral $p$-adic Hodge theory significantly. One of the many advancements of the prismatic theory is that it is \textit{motivic} in nature i.e. it recovers almost all earlier linear algebra objects to classify the $\Z_p$-lattices in $p$-adic Galois representations. Another significance is that it naturally extends to relative setting for geometric integral $p$-adic Hodge theory. More precisely, there exists the category of objects called \textit{Laurent $F$-crystals} on absolute log prismatic site associated to $X$ which turns out to be equivalent to $\textbf{Loc}_{\Z_p}(\cX)$. Let us describe it in greater details.\smallskip

\noindent We have the semistable affine formal scheme $X=\mathrm{Spf} R$ over $\cO_K$ that admits an étale map given by $\square : R^0 \rightarrow R$. We furthermore assume that the induced map $\square$ gives a bijection between the generic points of $X^0=\mathrm{Spf}(R^0)$ and $X=\mathrm{Spf}(R)$. Consider the prelog structure given by $\N^b \rightarrow R^0 : e_\ell \mapsto X_\ell (\ell=1,2, \cdots, b)$. Let $M_X$ be a log structure on $X$ associated to $\N^b \rightarrow R^0 \xrightarrow{\square} R$. This gives rise to a semistable log formal scheme $(X, M_X)$. We have the absolute log prismatic site $(X, M_X)_{\Prism}$ by the log prismatic theory developed by Koshikawa in \cite{KosI}. Let us give an example of an object in $(X, M_X)_{\Prism}$ which will be used throughout the paper. 

\begin{exa}\label{example}
Let $X^0:=\mathrm{Spf}(R^0)$. Consider $$\fS_{R^0}=W(k) \langle X_1, X_2, \cdots, X_r, X_{r+1}^{\pm}, X_{r+2}^{\pm}, \cdots X_{b}^{\pm} \rangle [[u]]/ (X_1.X_2. \dots X_r - u)$$
equipped with Frobenius $\varphi(X_{\ell})=X_{\ell}^p$ for $1 \leq \ell \leq b$. $E(u) \in W(k)[u]$ is the monic irreducible polynomial of $\pi$. Then, $(\fS_{R^0}, E(u)) \in X_{\Prism}$ with the structure map $\fS_{R^0} / E(u) \cong R^0$. The prelog structure $\N^b \rightarrow \fS_{R^0} : e_\ell \mapsto X_\ell$ further gives rise to log prism in $(X^0, M_{X^0})_{\Prism}$. We denote it as $(\fS_{R^0}, E, \N^b)^a$.\smallskip

\noindent Now, the $p$-adically completed étale map $\square : R^0 \rightarrow R$ lifts uniquely to $(p, E)$-completed étale map $$ \square_{\fS} : \fS_{R^0} \rightarrow \fS_{R, \square}.$$ We simply write $\fS := \fS_{R, \square}$. The log prism associated to the prelog prism structure (with respect to $\pi$ and $\square_{\fS}$) $(\mathrm{Spf} (\fS), (E(u)), \N^b \rightarrow \fS_{R^0} \xrightarrow{\square_{\fS}} \fS)$ (see Lem. 2.13 of \cite{KosI}) is denoted by $(\mathrm{Spf} (\fS), (E(u)), \N^r)^a$ as an object of $(X, M_X)_{\Prism}$ and called the Breuil-Kisin log prism.
\end{exa}
   
\noindent Now we define the category of Laurent $F$-crystals on $(X, M_X)_{\Prism}$. 

\begin{defn}
A Laurent $F$-crystal on $(X, M_X)_{\Prism}$ consists of a pair $(\cE, \varphi_{\cE})$; where $\cE$ is a crystals in vector bundles on the ringed site $((X, M_X)_{\Prism}, \cO_{\Prism}[\cI_{\Prism}^{-1}]^\wedge_p)$ and $\varphi_{\cE} : \varphi^\star \cE \rightarrow \cE$ is an isomorphism. The category of Laurent $F$-crystals on $(X, M_X)_{\Prism}$ is denoted by $\text{Vect}((X, M_X)_{\Prism}, \cO_{\Prism}[\cI_{\Prism}^{-1}]^\wedge_p)^{\varphi =1}$. 
\end{defn}

\noindent We will use a more simpler linear algebraic objects (regarded as relative version of $(\varphi, \tau)$-modules) which are equivalent to Laurent $F$-crystals on $(X, M_X)_{\Prism}$. To describe it, let us look at the self-coproduct of Breuil-Kisin log prism from Exam. \ref{example}.\smallskip

\noindent The Breuil-Kisin log prism is given by the object $(\fS, E, M_{\mathrm{Spf}(\fS)})$ in $(X, M_X)_{\Prism}^{\text{opp}}$. Let $(\fS^{(i)}, E, M_{\mathrm{Spf}(\fS^{(i)})})$ be its self-coproduct and self-tripleproduct of $(\fS, E, M_{\mathrm{Spf}(\fS)})$ in $(X, M_X)_{\Prism}^{\text{opp}}$ for $i=1,2$, respectively (See Subsec. 2.3 in \cite{DLMSII} for precise description of $\fS^{(i)}$).

\begin{defn}
A pair $(\cM, \varphi_{\cM})$ is called an étale $\varphi$-module over $\fS[E^{-1}]^\wedge_p$ consists of a finite locally free $\fS[E^{-1}]^\wedge_p$-module equipped with a $\varphi$-semilinear endomorphism $\varphi_{\cM}$ on $\cM$ whose linearization $\varphi_{\cM}^\star : \varphi^\star \cM \rightarrow \cM$ is an isomorphism of $\fS[E^{-1}]^\wedge_p$-modules. We sometimes write $\cO_{\cE} := \fS[E^{-1}]^\wedge_p$ and denote the category of étale $\varphi$-modules over $\cO_{\cE}$ by $\textbf{Mod}^{\varphi}_{\cO_{\cE}}$. 
\end{defn}

\begin{prop}(Lem. 3.21 in \cite{DLMSII})\label{DD-OE}
The evaluation on the diagram $p_1, p_2 : (\fS, E, M_{\mathrm{Spf}(\fS)}) \rightarrow (\fS^{(1)}, E, M_{\mathrm{Spf}(\fS^{(1)})})$ gives an equivalence between $\text{Vect}((X, M_X)_{\Prism}, \cO_{\Prism}[\cI_{\Prism}^{-1}]^\wedge_p)^{\varphi =1}$ and the category $\textbf{DD}_{\cO_{\cE}}$ of descent data over $\fS[E^{-1}]^\wedge_p$ consists of $(\cM, \varphi_{\cM}, f_{\text{ét}})$; where $(\cM, \varphi_{\cM})$ is an étale $\varphi$-module over $\fS[E^{-1}]^\wedge_p$ and an isomorphism $$ f_{\text{ét}} : \fS^{(1)}[E^{-1}]^\wedge_p \otimes_{p_1, \fS[E^{-1}]^\wedge_p} \cM \rightarrow \fS^{(1)}[E^{-1}]^\wedge_p \otimes_{p_2, \fS[E^{-1}]^\wedge_p} \cM$$ compatible with Frobenii and satisfying cocycle conditions over $\fS^{(2)}[E^{-1}]^\wedge_p$.
\end{prop}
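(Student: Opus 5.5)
The plan is to prove Proposition \ref{DD-OE} by descent for the Breuil--Kisin log prism: first show that it covers the final object of the topos of $(X, M_X)_{\Prism}$, then identify crystals on the site with modules on the Čech nerve. Concretely, I would show that for every log prism $(A, I, M_A)$ in $(X, M_X)_{\Prism}$ there is a $(p,I)$-completely faithfully flat map of log prisms $(A,I,M_A)\to (B,IB,M_B)$ such that $(\fS, E, M_{\mathrm{Spf}(\fS)})$ maps to $(B,IB,M_B)$. I would build $B$ by taking the $(p,I)$-completed log-prismatic envelope of $A\widehat\otimes_{W(k)}\fS$ along the kernel of the map to $R$ together with the log structure. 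Because $\fS$ is étale over the framed algebra $\fS_{R^0}$, it is enough to adjoin compatible roots of the $X_\ell$ and of $u$, and then use Koshikawa's exactification (Lem.~2.13 of \cite{KosI}) to check flatness. Granting this weak-final property, the Čech nerve of $\fS$ is $\fS^{(\bullet)}$, and the crystal property gives a fully faithful functor from crystals to descent data. Here $\cE\mapsto (\cE(\fS), \text{transition isomorphism})$ is compatible with $\varphi$ because all the structure maps are maps of $\delta$-log rings.

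For essential surjectivity, start from $(\cM,\varphi_\cM,f_{\text{ét}})$ and define $\cE(A)$ by gluing. Base change $\cM$ along $\fS[E^{-1}]^\wedge_p\to B[I^{-1}]^\wedge_p$, where $B$ is the cover above, and descend along $A[I^{-1}]^\wedge_p\to B[I^{-1}]^\wedge_p$ using $f_{\text{ét}}$ pulled back to $B\widehat\otimes_A B$. The cocycle condition over $\fS^{(2)}$ ensures that the gluing data satisfy the cocycle condition.

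The hard step is this descent: finite locally free modules over the Laurent rings $A[I^{-1}]^\wedge_p$ must satisfy faithfully flat descent along $p$-completely faithfully flat maps. I would follow the argument of Bhatt--Scholze \cite{BS23} for Laurent $F$-crystals and reduce modulo $p^n$. There the rings are $A/p^n[1/I]$ and the map is honestly faithfully flat, so classical descent for finite projective modules applies, and one passes to the limit over $n$. I expect the log-envelope flatness to be the main technical obstacle. Alternatively, I would cite the étale comparison of Koshikawa and Du--Liu--Moon--Shimizu, which identifies both sides with $\textbf{Loc}_{\Z_p}(\cX)$, and check that the functors agree.
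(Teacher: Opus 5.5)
Your outline follows the standard argument behind the cited result. The paper gives no proof of its own and simply quotes Lem.~3.21 of \cite{DLMSII}. That lemma has the same two inputs as your plan: the Breuil--Kisin log prism covers the final object of the topos of $(X,M_X)_{\Prism}$ (the small affine analogue of the covering statement recalled in Subsec.~\ref{sec-2-2}), and vector bundles over $\cO_{\Prism}[\cI_{\Prism}^{-1}]^\wedge_p$ satisfy flat descent, which is applied along the \v{C}ech nerve $\fS^{(\bullet)}$. Two small corrections to your first step:
\begin{itemize}
\item the envelope is formed along the kernel of $A\widehat{\otimes}_{W(k)}\fS\to A/I$, not of a map to $R$;
\item the covers in the log site are strict flat maps, and strictness is what lets you treat them as flat covers of the underlying prisms.
\end{itemize}

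The step you single out as the crux is justified incorrectly. From $(p,I)$-complete faithful flatness of $A\to B$ you only get that $A/p^n\to B/p^n$ is $I$-completely faithfully flat. The rings here are non-noetherian (an arbitrary bounded log prism $A$ and a log-prismatic envelope $B$). For such rings this does not formally imply that $A/p^n[I^{-1}]\to B/p^n[I^{-1}]$ is flat: $I$-completely flat maps need not be flat, and you give no argument that flatness is restored after inverting $I$. So classical descent for finite projective modules cannot just be invoked.

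What you need is the flat descent statement for $(A,I)\mapsto \text{Vect}(A[I^{-1}]^\wedge_p)$ established in \cite{BS23}. Cite it directly rather than deriving it from an unproved flatness claim; it applies verbatim to strict flat covers of log prisms. This input is needed for full faithfulness too (to descend morphisms from $B$ to $A$, and already for injectivity of $A[I^{-1}]^\wedge_p\to B[I^{-1}]^\wedge_p$), not only for essential surjectivity.

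Finally, drop the fallback route. In this paper the equivalence $\textbf{DD}_{\cO_{\cE}}\simeq\textbf{Loc}_{\Z_p}(\cX)$ in \eqref{triangle-diag} is obtained by composing Thm.~3.14 of \cite{DLMSII} with the very lemma under discussion, so using it here would be circular.
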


\noindent Both of the category of Laurent $F$-crystals and the category of descent data over $\fS[E^{-1}]^\wedge_p$ are equivalent to $\textbf{Loc}_{\Z_p}(\cX)$. We summarize it as the following diagram of equivalent categories culminating the equivalences given by Thm. 3.14 and Lem. 3.21 in \cite{DLMSII}.\smallskip

\begin{equation}\label{triangle-diag}
\begin{tikzcd}
\text{Vect}((X, M_X)_{\Prism}, \cO_{\Prism}[\cI_{\Prism}^{-1}]^\wedge_p)^{\varphi =1} \arrow[rd] \arrow[rr] &   & \textbf{DD}_{\cO_{\cE}} \arrow[ld] \\
                        & \textbf{Loc}_{\Z_p}(\cX) &             
\end{tikzcd}
\end{equation}

\begin{defn}\label{B-K-modules-intro}
A pair $(\fM, \varphi_{\fM})$ is called a Breuil-Kisin module over $\fS$ of $E$-height $\leq r$ if $\fM$ is a finite torsion free $\fS$-module such that $\fM[p^{-1}]$ (resp. $\fM[E^{-1}]$) is projective $\fS[p^{-1}]$-module (resp. $\fS[E^{-1}]$-module), $\fM = \fM[p^{-1}] \cap \fM[E^{-1}]$ and $\varphi_{\fM}$ is $\varphi$-semilinear endomorphism such that the cokernel of its linearization $\varphi_{\fM}^\star$ is killed by $E^r$ for some non-negative integer $r$.
\end{defn}

\noindent Now we define the finite $E$-height $\Z_p$-local system as follows :

\begin{defn}\label{finiteEheightlocal system-def}
A $\Z_p$-local system $\bL \in \textbf{Loc}_{\Z_p}(\cX)$ is called of finite $E$-height if the étale $\varphi$-module $(\cM, \varphi_{\cM})$ of the associated descent data $(\cM, \varphi_{\cM}, f_{\text{ét}}) \in \textbf{DD}_{\cO_{\cE}}$ as per above equivalence, arises from a Breuil-Kisin module $(\fM, \varphi_{\fM})$ over $\fS$ of finite $E$-height $\leq r$ for some non-negative $r$ i.e. $\cM = \cO_{\cE} \otimes_{\fS} \fM$.
\end{defn}

\noindent The notion of finite $E$-height $\Z_p$-local system $\bL$ depends on a choice of $p$-power compatible sequence $\{ X_i^{1/p^n}\}$ for each $i$. Let $\overline{R}$ be the union of finite $R$-algebras $R'$ inside a fixed algebraic closure of $\text{Frac}(R)$ such that $R'[1/p]$ is étale over $R[1/p]$. Denote $\Gal_R : = \Gal (\overline{R}[1/p] / R[1/p])$. The category $\textbf{Rep}_{\Z_p}(\Gal_R)$ of finite free $\Z_p$-representation of $\Gal_R$ is equivalent to $\textbf{Loc}_{\Z_p}(\cX)$ (c.f. \cite{KLI}, \cite{Sch22}). Let $\overline{R}^\wedge$ be the $p$-adic completion of $\overline{R}$ and $\overline{R}^\flat$ be its tilt. We denote $A_{\text{inf}}(\overline{R})=W(\overline{R}^\flat)$. The equivalence between $\text{Vect}((X, M_X)_{\Prism}, \cO_{\Prism}[\cI_{\Prism}^{-1}]^\wedge_p)^{\varphi =1}$ and $\textbf{Rep}_{\Z_p}(\Gal_R)$ can be described via the evaluation at the prism $(A_{\text{inf}}(\overline{R}), E, \N^r)^a$. Let $(\cE, \varphi_{\cE})$ be a Laurent $F$-crystal in $\text{Vect}((X, M_X)_{\Prism}, \cO_{\Prism}[\cI_{\Prism}^{-1}]^\wedge_p)^{\varphi =1}$ and $T$ be the corresponding $\Z_p$-representation of $\Gal_R$. The embedding $\fS \hookrightarrow A_{\text{inf}}(\overline{R})$ \textit{defined by} $X_i \mapsto [X_i^\flat]$ for $1 \leq i \leq b$ induces the log structure on $(A_{\text{inf}}(\overline{R}), E)$. Thus, we have an object $(A_{\text{inf}}(\overline{R}), E, \N^r)^a$ in $(X, M_X)^{\text{opp}}_{\Prism}$. Note that $\Gal_R$ naturally acts on the prism $(A_{\text{inf}}(\overline{R}), E, \N^r)^a$ and therefore, by functoriality , we have the action of $\Gal_R$ acts on the $\Z_p$-module $T^\vee=\cE((A_{\text{inf}}(\overline{R}), E, \N^r)^a)^{\varphi=1}$. On the other hand, when we evaluate $\cE$ at the Breuil-Kisin prism and its co-products, we get the descent data $(\cM, \varphi_{\cM}, f_{\text{ét}})$ over $\fS[E^{-1}]^\wedge_p$ and $T^\vee=(\cM \otimes_{\fS[E^{-1}]^\wedge_p} W(\overline{R}^\flat[(\pi^\flat)^{-1}]))^{\varphi=1}$. Therefore, if $T$ is of finite $E$-height $\Z_p$-representation of $\Gal_R$ then $T^\vee=(\fM \otimes_{\fS} W(\overline{R}^\flat[(\pi^\flat)^{-1}]))^{\varphi=1}$. This expression depends on the embedding $\fS \hookrightarrow A_{\text{inf}}(\overline{R})$ i.e. the choice of compatible system $\{ X_i^{1/p^n} \}_n$ for $1 \leq i \leq b$ (see Sec. \ref{sec-3} for similar situation in classical case in terms of $(\varphi, \tau)$-modules). In particular, $\textbf{DD}_{\cO_{\cE}}$ is appropriate replacement of $(\varphi, \tau)$-modules in our relative case.

\subsection{Semistable $\Z_p$-local systems:}\label{subsec-1.2}

Let $\cX:=(X)_{\eta}$ be the adic generic fibre of $X$. So, $X$ is a semistable formal model of $\cX$. The semistable $\Z_p$-local system has been studied extensively in \cite{Fal89}, \cite{Fal02}, \cite{AI12}, \cite{Tsu}. A semistable $\Z_p$-local system can be defined in terms of association to (filtered) $F$-isocrystals on the crystalline site of the mod-$p$ fibre $(X_1, M_{X_1})$ of $(X, M_X)$. This definition is much involved and indeed requires the detailed description of absolute crystalline site $(X_1, M_{X_1})_{\text{cris}}$, $F$-isocrystal on $(X_1, M_{X_1})_{\text{cris}}$ etc. We refer the reader to appendix B of \cite{DLMSII} for it. We follow the notion of prismatic semistability for $\Z_p$-local system as in \cite{DLMSII}. By Thm. 1.8 of \textit{loc. cit.} these notions of semistability are equivalent.

\begin{defn}
Let $(A, I, M_{\mathrm{Spf}(A)}) \in (X, M_X)_{\Prism}$. Denote $\text{Vect}^{\text{an}, \varphi}(A, I)$ to be the category of pairs $(\cE_A, \varphi_{\cE_A})$; where $\cE_A$ is a vector bundle over $\mathrm{Spec}(A) \setminus V(p, I)$ and $\varphi_{\cE_A}$ is a $\varphi$-semilinear endomorphism on $\cE_A$ which induces an isomorphism of vector bundles $ \varphi_{\cE_A}^\star : \varphi_A^\star(\cE_A)[I^{-1}] \rightarrow \cE_A[I^{-1}]$. Such an pair is called effective if the isomorphism comes from a morphism $\varphi_A^\star(\cE_A) \rightarrow \cE_A$ of vector bundles over $\mathrm{Spec}(A) \setminus V(p, I)$.\smallskip

Now, the category of analytic prismatic $F$-crystals over $(X, M_X)_{\Prism}$ is defined by $$\text{Vect}^{\text{an}, \varphi}((X, M_X)_{\Prism}):=\lim_{(\mathrm{Spf} A, I, M_{\mathrm{Spf} (A)})\in (X, M_X)_{\Prism}} \text{Vect}^{\text{an}, \varphi}(A, I).$$
An analytic prismatic $F$-crystal is denoted by $(\cE_{\Prism}, \varphi_{\cE_{\Prism}})$ or $\cE_{\Prism}$. We write $(\cE_{\Prism, A}, \varphi_{\cE_{\Prism, A}})$ for the associated object in $\text{Vect}^{\text{an}, \varphi}(A, I)$. In particular, we set $\cE_{\Prism}(A):= \cE_{\Prism}(A, I, M_{\mathrm{Spf}(A)}) = H^0(\mathrm{Spec}(A) \setminus V(p, I) , \cE_{\Prism, A})$. Finally, $(\cE_{\Prism}, \varphi_{\cE_{\Prism}})$ is called effective if $(\cE_{\Prism, A}, \varphi_{\cE_{\Prism, A}})$ is effective for each $(A, I, M_{\mathrm{Spf}(A)})$.
\end{defn}

\noindent Similar to the Laurent $F$-crystals, the analytic prismatic $F$-crystals can be described in terms of \textit{integral} Kisin descent data via evaluation at Breuil-Kisin log prism and its co-products.

\begin{defn}
$\text{DD}_{\fS}$ denotes the category of triplets $(\fM, \varphi_{\fM}, f)$ such that 

\noindent (1)$(\fM, \varphi_{\fM})$ is finite torsion free $\fS$-module such that $\fM[p^{-1}]$ (resp. $\fM[E^{-1}]$) is projective $\fS[p^{-1}]$-module (resp. $\fS[E^{-1}]$-module) and $\fM = \fM[p^{-1}] \cap \fM[E^{-1}]$.

\noindent (2) $\varphi_{\fM}$ is $\varphi$-semilinear endomorphism such that the cokernel of its linearization $\varphi_{\fM}^\star$ is killed by $E^r$ for some non-negative integer $r$.

\noindent (3) There is an isomorphism over $\fS^{(1)}$ $$f : \fS^{(1)} \otimes_{p_1, \fS} \fM \rightarrow \fS^{(1)} \otimes_{p_2, \fS} \fM$$
compatible with Frobenii and satisfying cocycle conditions over $\fS^{(2)}$.
\end{defn} 

\begin{rem}
In the CDVR case (e.g. see Sec. \ref{Sec-4}) the Breuil-Kisin module $(\fM, \varphi_{\fM})$ becomes finite free $\fS$-module (c.f Cor 3.9 in \cite{DLMSII}). 
\end{rem}

\noindent The restriction along $\mathrm{Spec}(\fS) \setminus V(p,E) \hookrightarrow \mathrm{Spec}(\fS)$ provides the following equivalence (see Lem. 3.8 in \cite{DLMSII}) $$\text{Vect}^{\text{an}, \varphi}((X, M_X)_{\Prism}) \rightarrow \textbf{DD}_{\fS}\ \text{given by}\ \cE_{\Prism} \mapsto \fM=H^0(\mathrm{Spec}(\fS) \setminus V(p,E), \cE_{\Prism})$$

The definition of prismatic semistable $\Z_p$-local system is described in terms of an étale realization functor defined as follows.

\begin{defn}\label{etale-realization}
For any log prism $(\mathrm{Spf}(A), I, M_{\mathrm{Spf}(A)}) \in (X, M_X)_{\Prism}$, restriction along $\mathrm{Spec}(A) \setminus V(I) \hookrightarrow \mathrm{Spec}(A) \setminus V(p, I)$ and taking scalar extension along $A[I^{-1}] \rightarrow A[I^{-1}]^\wedge_p$ defines a functor $\text{Vect}^{\text{an}, \varphi}(A, I) \longrightarrow \text{Vect}(A[I^{-1}]^\wedge_p)^{\varphi =1}$. This induces a functor as in Lem. 3.13 of \cite{DLMSII} 
$$ T_X : \text{Vect}^{\text{an}, \varphi}((X, M_X)_{\Prism}) \rightarrow \text{Vect}((X, M_X)_{\Prism}, \cO_{\Prism}[\cI_{\Prism}^{-1}]^\wedge_p)^{\varphi =1} \cong \textbf{Loc}_{\Z_p}(\cX).$$
\end{defn} 

\noindent This is called étale realization functor. Moreover, $T_X$ is fully faithful functor by Prop. 3.20 of \cite{DLMSII}. Now we define prismatic semistable $\Z_p$-local system as follows.

\begin{defn}
A $\Z_p$-local system $\bL \in \textbf{Loc}_{\Z_p}(\cX)$ is prismatic semistable if it lies in the essential image of the above étale realization functor $T_X$. We use this notion of semistability for a $\Z_p$-local system throughout this paper and simply call it semistable $\Z_p$-local system.
\end{defn}

\begin{rem}
Note that the definition of prismatic semistability depends on our fixed semistable model $X$ of $\cX$. In this sense, it should be called prismatic $X$-semistable. However, in our small affine case, $\cX=\mathrm{Spa}(R[1/p], R)$ is affinoid; so the semistable model is unique (see Rem. 3.27 in \cite{DLMSII} for more discussion).
\end{rem}

Now we state our main result:

\begin{thm}\label{main-thm-1}
Let $m=\text{max} \{i \in \N \cup \{0\}: \zeta_{p^i} \in \widehat{K^{\text{un}}} \}$. Let $X^0_m := \mathrm{Spf}(R_m^0)$; where $$ R_m^0 := \cO_{K_m} \langle X_1, X_2, \cdots, X_r, X_{r+1}^{\pm}, X_{r+2}^{\pm}, \cdots X_{b}^{\pm} \rangle / (X_1.X_2. \dots X_r - \pi_m).$$ 
Consider the map $\iota^0_m : X_m^0 \rightarrow X^0$ given by $X_j \mapsto X_j^{p^m}$ for $j = \{ 1,2, \cdots, r\}$ and $X_i \mapsto X_i$ for $i \neq j$. Write $X_m:= X_m^0 \times_{X^0} X$; which is equipped with the structure map $\iota_m : X_m \rightarrow X$ induced by $\iota_m^0$.  If $\bL$ is a finite $E$-height $\Z_p$-local system over $\cX=X_{\eta}$\ , then its pullback $\iota_m^\star \bL$ along the induced map $\iota_m : \cX_m \rightarrow \cX$ is (prismatic) semistable $\Z_p$-local system over $\cX_m:=(X_m)_{\eta}$\ .
\end{thm}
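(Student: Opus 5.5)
Here is the plan. Write $\fM$ for the Breuil--Kisin module over $\fS$ of $E$-height $\le r$ attached to $\bL$, so that $\cM=\cO_{\cE}\otimes_{\fS}\fM$. We aim to build an analytic prismatic $F$-crystal on $(X_m,M_{X_m})_{\Prism}$ whose étale realization is $\iota_m^\star\bL$. By the equivalence of Lem.~3.8 of \cite{DLMSII} (analytic $F$-crystals $\leftrightarrow \textbf{DD}_{\fS}$) together with the full faithfulness of $T_X$, it suffices to produce an integral Kisin descent datum $(\fM_m,\varphi,f)\in\textbf{DD}_{\fS_m}$ over the Breuil--Kisin log prism $\fS_m$ of $X_m$, compatible with the étale descent datum of $\iota_m^\star\bL$ in $\textbf{DD}_{\cO_{\cE,m}}$ (Prop.~\ref{DD-OE}).

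\textbf{Step 1: base change.} On $X_m$ the uniformizer is $\pi_m$, whose Eisenstein polynomial is $E(u^{p^m})$ in the variable $u_m=u^{1/p^m}$. The map $\iota_m$ lifts to a morphism of log prisms $\fS\to\fS_m$ sending $u\mapsto u_m^{p^m}$, $X_j\mapsto X_j^{p^m}$ for $j\le r$ and $X_i\mapsto X_i$ otherwise; this is compatible with $\square_{\fS}$ by étaleness. We then set $\fM_m:=\fS_m\otimes_{\fS}\fM$. It is again a Breuil--Kisin module of $E_m$-height $\le r$: the cokernel of $\varphi^\star$ is killed by $E^r=E_m^r$, and the conditions on projectivity after inverting $p$ or $E$ and the intersection condition are preserved because $\fS\to\fS_m$ is flat. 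Moreover $\cO_{\cE,m}\otimes\fM_m$ is the étale $\varphi$-module of $\iota_m^\star\bL$, with étale descent datum $f_{\text{ét},m}$ obtained by base change along $\fS^{(1)}\to\fS_m^{(1)}$.

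\textbf{Step 2: descend $f_{\text{ét},m}$ to $\fS_m^{(1)}$ (the main obstacle).} We must show that $f_{\text{ét},m}$ carries $\fS_m^{(1)}\otimes_{p_1}\fM_m$ isomorphically onto $\fS_m^{(1)}\otimes_{p_2}\fM_m$ inside the $\fS_m^{(1)}[E^{-1}]^\wedge_p$-modules. The approach is to reduce, via the purity results of Du--Liu--Moon--Shimizu, to a codimension-one statement. Both source and target are reflexive-type modules satisfying $N=N[p^{-1}]\cap N[E^{-1}]$, so it is enough to check integrality after localizing at the generic points of the special fibre, i.e.\ at height-one primes $(p)$ of $\fS_m^{(1)}$ over the generic points of $X_m$. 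The bijection of generic points under $\square$ makes this reduction legitimate.

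Localizing there and completing, $\fS^{(1)}_m$ becomes a base-change of the self-coproduct of the Breuil--Kisin prism for a CDVR $\cO_L$ with imperfect residue field, where $L$ is the fraction field of the completed local ring of $R_m$ at a generic point of the special fibre. In that situation the question becomes exactly the classical one treated in Sec.~\ref{Sec-4}. There $\bL$ restricts to a finite $E$-height representation of $\Gal_L$, and a relative/imperfect-residue-field version of Gao's Theorem~\ref{Gao-main-perf} shows that after passing to $L(\pi_m)$ (equivalently, extracting $p^m$-th roots of the $X_j$, which is what $\iota_m$ does) the representation is semistable. Its Kisin module is then $\fM_m$ by uniqueness of finite height lattices (Kisin's Lem.~2.1.15), so the descent datum is integral over the CDVR coproduct.

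I expect this local step to carry the real content. Concretely it requires checking that the $\tau$-action, i.e.\ the descent isomorphism, preserves $\fM_m\otimes A_{\text{inf}}$; the choice of $m$ ensures $\zeta_{p^m}$ enters only through $K^{\text{un}}$, as in Gao's argument. It is also where the condition $m=\max\{i:\zeta_{p^i}\in\widehat{K^{\text{un}}}\}$ is used.

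\textbf{Step 3: conclude.} Finally, the cocycle condition over $\fS_m^{(2)}$ and Frobenius compatibility for $f$ follow from those of $f_{\text{ét},m}$, because $\fS^{(2)}_m\subset\fS^{(2)}_m[E^{-1}]^\wedge_p$ is injective. This gives an object of $\textbf{DD}_{\fS_m}$, hence an analytic $F$-crystal whose étale realization is $\iota_m^\star\bL$ by construction. Therefore $\iota_m^\star\bL$ is semistable.
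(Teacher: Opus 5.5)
Your outer architecture matches the paper's: base change along $\iota_m$, reduction to the generic points $\xi_j$ by Du--Liu--Moon--Shimizu purity, and a CDVR statement over $L_j(\pi_m)$. But Step~2, which you rightly say carries the real content, is not supplied, and the route you sketch for it does not work.

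The local claim you need is that a finite $E$-height representation of $\Gal_L$, for a CDVF $L$ with imperfect residue field, becomes semistable on $\Gal_{L(\pi_m)}$. This is not an available variant of Thm.~\ref{Gao-main-perf}; it is the paper's new input, Thm.~\ref{mainthm-cdvf}. If you simply cite it, Step~2 collapses to the paper's argument. Proving it ``as in Gao'', by checking that the $\tau$-action preserves $\fM_m\otimes A_{\text{inf}}$, does not go through. Gao's proof rests on overconvergence of \'etale $(\varphi,\tau)$-modules, which is known only for perfect residue fields (see the remark after Thm.~\ref{mainthm-cdvf}).

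The missing idea is to avoid any $\tau$-computation over $L$:
\begin{enumerate}
\item Base change along $\cO_{L_0}\to W(k_g)$, $X_i\mapsto[X_i^\flat]$, to the perfect-residue-field CDVF $L_g$. There Thm.~\ref{Gao-main-perf} applies as a black box to $T|_{\Gal_{L_{m,g}}}$ with $L_{m,g}=(L_m)_g$; this is the only place $m$ enters.
\item Identify restriction to $\Gal_{L_m}$ and to $\Gal_{L_g}$ with pullback of descent data along $\iota_m^{(1)}$ and $\iota_g^{(1)}$ (Lem.~\ref{func-imperf}, Prop.~\ref{res-imperf}, Cor.~\ref{res-imperf-fin}). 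Thus $T|_{\Gal_{L_m}}$ has the finite $E_m$-height datum $(\fM_{L_m},\varphi,f_m)$.
\item Descend the integrality of the semistable datum over $\fS^{(1)}_{L_{m,g}}$ to $\fS^{(1)}_{L_m}$ using the intersection Lem.~\ref{intersection-equality} (Prop.~\ref{prop-imp-perf-st}).
\end{enumerate}

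Your codimension-one reduction is also misdescribed, and it is unnecessary.
\begin{itemize}
\item Localizing $\fS^{(1)}_m$ at $(p)$ inverts $u$, hence also $E_m$ (which is $u^{ep^m}$ modulo $p$). So you land in the \'etale world, where integrality of $f_{\text{\'et},m}$ is vacuous.
\item The CDVR ring $\fS_{L_j}=\cO_{L_{0,j}}[[u]]$, in which $u$ is not inverted, sits over a generic point of $V(p,E)$, which is a height-two prime, not $(p)$.
\item $f_{\text{\'et},m}$ is defined only over the $p$-completed ring $\fS^{(1)}_m[E_m^{-1}]^\wedge_p$. So the condition $N=N[p^{-1}]\cap N[E^{-1}]$ does not control it without an intersection statement of the type of Lem.~\ref{intersection-equality}.
\end{itemize}

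The paper never proves integrality of $f_{\text{\'et},m}$ on $\fM_m$ by hand, so it also needs no uniqueness of the lattice over $\fS_{R_m}$. It applies Thm.~\ref{purity-thm} directly to the local system $\iota_m^\star\bL$ on $\cX_m$. The only input is the commutative square \eqref{square1-sec-5-3}: the pullback of $(\fM_{R_m},\varphi,f_m)$ along $\iota^{(1)}_{m,j}$ agrees with the pullback of the datum of $T_j=\iota_j^\star\bL$ along $\iota^{(1)}_{j,m}$. That is the datum of $T_j|_{\Gal_{L_{j,m}}}$, which is semistable by Thm.~\ref{mainthm-cdvf}.
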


\noindent The notion of potential semistability for a $\Z_p$-local system is well-known in relative $p$-adic Hodge theory. For instance, various version of relative $p$-adic monodromy theorem roughly states that a de Rham $\Z_p$-local system $\bL$ on $\cX$ is potentially semistable i.e. there exists a finite étale cover $\iota : \cY \rightarrow \cX$ of $\cX$ such that the pullback $\iota^\star \bL$ is semistable (see \cite{Du26} for the latest exposition). In Gao's result i.e. Thm. \ref{Gao-main-perf} provides us the semistability of finite $E$-height representation at an extension $\mathrm{Spa}(K_m, \cO_{K_m})$ of $\mathrm{Spa}(K, \cO_{K})$, which is finite étale. Similarly, Thm. \ref{main-thm-1} gives us a finite étale cover $\cX_m \rightarrow \cX$ along which the pullback of a finite $E$-height $\Z_p$-local system becomes semistable.\smallskip

\noindent Note that our Thm. \ref{main-thm-1} could not give fully the relative version of Thm. \ref{Gao-main-perf}. More precisely, it is expected that the $p$-adic local system associated to the  semistable $\Z_p$-local system $\iota_m^\star \bL$ can be \textit{extended} to a semistable $p$-adic local system (upto isogeny) over $\cX$. This would be discussed in a sequel of the current paper.

\subsection{Relation to other works in literature:}\label{sec-1-3}
It is evident to the reader that this work is mostly inspired by the results of Gao in \cite{Gao}, \cite{Gao-imp}. In \textit{loc. cit.} Gao defined the category of Breuil-Kisin $\Gal_K$-modules (resp. weak Breuil-Kisin $\Gal_K$-modules) to classify the semistable $\Z_p$-representations of $\Gal_K$ (resp. finite $E$-height $\Z_p$-representations of $\Gal_K$). These objects are in similar flavour to Liu's earlier work on $(\varphi, \widehat{G})$-modules and weak $(\varphi, \widehat{G})$-modules. In \cite{Liu}, the category of $(\varphi, \widehat{G})$-modules was defined and proven to be equivalent with semistable $\Z_p$-representations of $\Gal_K$. Ozeki proved in \cite{Oz} that the category of weak $(\varphi, \widehat{G})$-modules is equivalent to the finite $E$-height $\Z_p$-representations of $\Gal_K$ answering Ques. 4.3.1 (1) in \cite{Liu}. One can encounter a limitation while working with some analogue of $(\varphi, \widehat{G})$ in relative setting. It is defined in terms of a ring $\widehat{\mathcal{R}}$ which is not known to be $p$-adically complete (see Rem. 3.3.5 in \cite{DL} for more details). On the other hand, Gao's Breuil-Kisin $\Gal_K$-module in \cite{Gao} is defined in terms of $A_{\text{inf}}$. So, one can expect some relative version of it. In fact, in Subsec. 4.4 of \cite{Gao-imp}, Gao defined a category of Breuil-Kisin $\Gal_L$-modules for a CVDF $L$ with imperfect residue field having finite $p$-basis and proved a fully faithful functor from the category of semistable $\Z_p$-representations of $\Gal_L$ to the category of Breuil-Kisin $\Gal_L$-modules\footnote{H. Gao informed the author that Subsection 4.4 of \cite{Gao-imp} contains some gaps. However, all the results preceding that subsection are correct and some of them have been used in the present paper.}. However, it is not known to be equivalent (see Rem. 4.4.5 in \cite{Gao-imp}).\smallskip

\noindent The work of Du-Liu-Moon-Shimizu in \cite{DLMS}, \cite{DLMSII} provides us the analytic prismatic $F$-crystals (as discussed in Subsec. \ref{subsec-1.2}). This can be used to recover Breuil-Kisin $\Gal_K$-modules as in Def. 1.1.8 in \cite{Gao} by evaluating at Breuil-Kisin prism and $A_{\text{inf}}$-prism. This is in the spirit of Rem. 1.1.10 (3) in \cite{Gao} on the comparison of Breuil-Kisin cohomology and $A_{\text{inf}}$-cohomology. In the classical case, the integral Kisin descent data, studied by Du-Liu in \cite{DL}, is the correct replacement of $(\varphi, \widehat{G})$-modules as $\fS^{(1)}$ is $p$-adically complete (described as $A_{\text{st}}^{(2)}$ in \cite{DL}). Moreover, the integral Kisin data is also available for relative setting e.g. imperfect residue field case, small affine case etc. by \cite{DLMS}, \cite{DLMSII}. In particular, the objects in the category $\textbf{DD}_{\cO_{\cE}}$ whose underlying étale $\varphi$-modules arises from finite height Breuil-Kisin modules can be thought of as relative version of weak $(\varphi, \widehat{G})$-modules and weak Breuil-Kisin $\Gal_K$-modules.\smallskip

\noindent If $L$ is a CDVF of char $(0,p)$ with imperfect residue field, then one can consider a corresponding CDVF $L_g$ which is a non-algebraic extension of $L$ and with perfect residue field (see Sec. \ref{sec-2} for more details). One can choose an embedding $\overline{L} \hookrightarrow \overline{L_g}$ of fixed respective algebraic closures. This induces a continuous group homomorphism $\Gal_{L_g} \rightarrow \Gal_L$. One can use Thm. 1.1 of \cite{Mor} to prove that if $T$ is a finite $E$-height $\Z_p$-representation of $\Gal_L$ then the $p$-adic representation $T[1/p]$ of $\Gal_L$ is potentially semistable (see Prop. \ref{mainthm-prelim}). However, this result does not detect the precise finite extension of $L$ over what $T$ would be semistable. The novelty of the current paper is the use of integral Kisin descent data which gives us the semistability over the finite extension $L_m = L(\pi_m)$ which is reminiscent of Thm. \ref{Gao-main-perf}. Moreover, such phenomenon happens for the small affine case which is given by Thm. \ref{main-thm-1}.

\subsection{Methodology:}

Let us discuss the roadmap to prove Thm. \ref{main-thm-1}. Let $L$ be a CDVF of mixed characteristic $(0,p)$ containing $K$ and with imperfect residue field having finite $p$-basis. One can define the notion of finite $E$-height for a $\Z_p$-representation of $\Gal_L=\Gal(\overline{L}/L)$ in terms of étale $\varphi$-module that arises from a finite free Breuil-Kisin module as in Def. \ref{finiteEheightlocal system-def} (see Def. \ref{def-1}). The formalism of semistable Galois representation in imperfect residue field case can be attributed to \cite{Fon94}, \cite{Mor}, \cite{Ohk13}. For convention, we call a $\Z_p$-representation $T$ of $\Gal_L$ to be semistable if the corresponding $p$-adic representation $T[1/p]$ of $\Gal_L$ is semistable. The key intermediate step is the following analogue of Thm. \ref{Gao-main-perf}, which we prove in Sec. \ref{Sec-4}.

\begin{thm}\label{main-thm-2}
Assume that $m=\text{max} \{i : \zeta_{p^i} \in \widehat{K^{\text{un}}} \}$. Let $T$ be a finite $E$-height $\Z_p$-representation of $\Gal_L$. Then, $T|_{\Gal_{L_m}}$ is semistable $\Z_p$-representation of $\Gal_{L_m}$; where $L_m = L(\pi_m)$. 
\end{thm}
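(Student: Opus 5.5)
We plan to prove Theorem \ref{main-thm-2} by reducing to the perfect residue field case, Theorem \ref{Gao-main-perf}, through the extension $L \subset L_g$. We then use the integral Kisin descent data of \cite{DLMS} to come back from $L_g$ to $L_m$.

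\textbf{Step 1: set up the descent datum.} Let $\fS_L$ be the Breuil-Kisin ring attached to $L$ (with its $p$-basis lifts and $u\mapsto[\pi^\flat]$). Let $(\cM,\varphi_\cM,f_{\text{ét}})\in\textbf{DD}_{\cO_\cE}$ correspond to $T$. By hypothesis, $\cM=\cO_\cE\otimes_{\fS_L}\fM$ for a finite free Breuil-Kisin module $\fM$ of $E$-height $\le r$.

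\textbf{Step 2: restrict to $L_g$.} Restricting along $\Gal_{L_g}\to\Gal_L$ and base changing along $\fS_L\to\fS_{L_g}=W(k_g)[[u]]$, we see that $T|_{\Gal_{L_g}}$ is of finite $E$-height in the sense of Definition \ref{fin-defn-classical}. Theorem \ref{Gao-main-perf} then shows that $T|_{\Gal_{L_g(\pi_m)}}$ is semistable. The integer $m$ is unchanged, because $\widehat{L_g^{\text{un}}}$ and $\widehat{K^{\text{un}}}$ contain the same $p$-power roots of unity.

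\textbf{Step 3: pass to $\fS_m$.} Let $\fS_m=W(k)[[u_m]]$ with $u_m^{p^m}=u$, so that $\fS_m$ is the Breuil-Kisin ring for $L_m$ with uniformizer $\pi_m$. Over $L_m$, the module $\fM_m:=\fS_{L_m}\otimes_{\fS_L}\fM$ is again of finite $E_m$-height, since $E(u)=E(u_m^{p^m})$ and $E_m(u_m)$ divides it up to a unit. Semistability of $T|_{\Gal_{L_m}}$ is equivalent, by the imperfect residue field version of the results of \cite{DLMS}, to the following: the étale descent isomorphism $f_{\text{ét}}$ over $\fS^{(1)}_{L_m}[E^{-1}]^\wedge_p$ restricts to an isomorphism
$$f:\fS^{(1)}_{L_m}\otimes_{p_1}\fM_m\to\fS^{(1)}_{L_m}\otimes_{p_2}\fM_m .$$
In other words, the object lies in $\textbf{DD}_{\fS}$, and full faithfulness of $T_X$ plus the definition of the essential image then gives semistability. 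So the task is to show that $f_{\text{ét}}$ preserves the integral lattice.

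\textbf{Step 4: prove integrality.} This is the main obstacle. We decompose $\fS^{(1)}_{L_m}$ into the geometric part, generated by the $p$-basis variables, and the arithmetic part, generated by $u_m$. This is possible because $\fS^{(1)}$ is a $(p,E)$-completed PD-type envelope, and the two parts are flat over the arithmetic one.

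For the arithmetic part, integrality follows from Step 2 via Gao's argument in \cite{Gao}. The relevant fact is that the $\tau$-action, equivalently the $A_{\text{inf}}$-realization of the weak Breuil-Kisin $\Gal$-module, becomes integral exactly over $K_m$, where $\tau^{p^m}$ acts through $\zeta_{p^m}$-compatible elements. We transport this via the faithfully flat map $\fS^{(1)}_{L_m}\to A_{\text{inf}}(\widehat{\overline{L_g}})$ and intersect, in the spirit of $\fM=\fM[p^{-1}]\cap\fM[E^{-1}]$.

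For the geometric part, we would first try Frobenius iteration. Write the descent matrix as $Y$. It satisfies $Y\varphi(A)=A\varphi(Y)$, where $A$ is the matrix of $\varphi_\fM$, and $A$ has $E^r$-bounded inverse. The geometric variables of $\fS^{(1)}$ lie in the kernel of $\fS^{(1)}\to\fS$, which is topologically nilpotent under $\varphi$. Iterating therefore forces the denominators in $Y$ to vanish: we show $Y\in\mathrm{GL}(\fS^{(1)}_{L_m}[E^{-1}]^\wedge_p)$ has entries in $\fS^{(1)}_{L_m}[1/p]$, using the purity results of \cite{DLMS}. Then $Y\in\fS^{(1)}_{L_m}$ follows by intersecting.
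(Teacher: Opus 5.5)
Your Steps 1--3 follow the paper's route. You pass to $L_g$, apply Theorem \ref{Gao-main-perf} over $L_g(\pi_m)=(L_m)_g$, pull the finite-height descent datum back along $\iota_m^{(1)}$, and reduce to integrality of the descent matrix $Y$. So everything rests on Step 4, and its geometric half cannot be proved. It fails, and with it the statement as written when $m\geq 1$.

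Here is a counterexample. Let $\zeta_p\in K$, so $m\geq 1$. Let $T=\Z_p[\zeta_p]$, with $\sigma\in\Gal_L$ acting by $\psi(\sigma)=\sigma(X_1^{1/p})/X_1^{1/p}\in\mu_p$.
\begin{itemize}
\item Since $X_1^{1/p}\in L_\infty$, $T|_{\Gal_{L_\infty}}$ is trivial. So $T$ has finite $E$-height with $\fM=\fS_L\otimes_{\Z_p}T$ and $\varphi$-matrix $A=I$.
\item $T|_{\Gal_{L_g}}$ is also trivial, because $L_g$ contains the Teichm\"uller roots $[X_1^\flat]^{1/p^n}$.
\item The residue field of $L_m=L(\pi_m)$ is still $k(X_1,\dots,X_b)$, so $X_1^{1/p}\notin L_m$. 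Hence $T|_{\Gal_{L_m}}$ is a nontrivial finite-image representation cut out by the fiercely ramified extension $L_m(X_1^{1/p})$. It is not semistable, since semistable representations with finite image are unramified.
\end{itemize}
On the descent side, $Y$ is multiplication by some $y\in\fS^{(1)}_{L_m}[E_m^{-1}]^\wedge_p\otimes_{\Z_p}\Z_p[\zeta_p]$ with $\varphi(y)=y$, $y\mapsto 1$ along the diagonal, and $y\neq 1$. So $\varphi$ is not topologically nilpotent on the diagonal kernel once $E$ is inverted. Your relation $Y\varphi(A)=A\varphi(Y)$, which here is just $y=\varphi(y)$, carries no integrality information. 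The same happens in the arithmetic direction with $\sigma\mapsto\sigma(\pi_1)/\pi_1$ over $K$ itself. That is exactly why Theorem \ref{Gao-main-perf} has to pass to $K_m$.

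Your arithmetic half also relies on a map that does not exist. Every map $\fS^{(1)}_{L_m}\to A_{\rm inf}(\widehat{\overline{L_g}})$ induced by elements of $\Gal_{L_g}$ sends $X_i$ and $X_{i,2}$ to the same element $[X_i^\flat]$. So it is not faithfully flat, not even injective, and data over $L_g$ determine $Y$ only modulo the geometric directions.

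You were right to separate the two directions. The paper's Proposition \ref{prop-imp-perf-st} does not, and it has the same blind spot. Lemma \ref{intersection-equality} only places $\iota_g^{(1)}(Y)$ in $\iota_g^{(1)}(\fS^{(1)}_L)$. Since $\iota_g^{(1)}$ kills $X_{i,2}-X_i$, this does not give $Y\in\fS^{(1)}_L$, and the character $\psi$ above contradicts that proposition as well.

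A true statement has to enlarge $L_m$ in the geometric directions too. The natural candidate is the $m$-th layer $L(\pi_m,X_1^{1/p^m},\dots,X_b^{1/p^m})$ of the full tower $L_\infty$, which kills $\psi$ and its order-$p^m$ analogues. Even there, a genuine argument controlling the geometric part of $Y$ would still be needed.
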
 

    \noindent We describe the dictionary between the restriction of Galois action and the pullback of integral Kisin descent data in Sec. \ref{sec-3}. The crucial observation is Prop. \ref{prop-imp-perf-st} which states that if $T$ is a finite $E$-height $\Z_p$-representation of $\Gal_L$ such that $T|_{\Gal_{L_g}}$ is semistable then $T$ is semistable. The central idea is that the restriction of $T$ along $\Gal_{L_g} \rightarrow \Gal_L$ is equivalent to pullback of descent data along a map $\iota_g^{(1)}$ among respective Breuil-Kisin log prisms and their co-products (see Subsec. \ref{sec-2-3}). The proof of Prop. \ref{prop-imp-perf-st} relies on a technical Lem. \ref{intersection-equality} about intersection of co-products of Breuil-Kisin log prisms. Then Thm. \ref{main-thm-2} can be deduced as follows. First we obtain semistability over the finite extension $L_g(\pi_m)$ of $L_g$ by Thm. \ref{Gao-main-perf} and get integral Kisin descent data over $L_g(\pi_m)$. Now, $T|_{\Gal_{L_m}}$ is a finite $E(u)$-height $\Z_p$-representation of $\Gal_{L_m}$; so we use Prop. \ref{prop-imp-perf-st} to conclude that $T|_{\Gal_{L_m}}$ is semistable as per Def. \ref{etale-realization}. \smallskip

\noindent Finally, we use a purity result of \cite{DLMSII} to prove Thm. \ref{main-thm-1}. To describe the purity result we make a few more notations. Let us write the generic points of $X$ by $\{ \xi_1, \xi_2, \cdots, \xi_r\}$. For example, if $X=X^0$ (i.e. $R=R^0$) then $\xi_j=(X_j)$ for all $1 \leq j \leq r$. For each such point $\xi_j$ the completed local ring $\cO_{X, \xi_j}^\wedge$ is a CDVR with uniformizer $\pi$. Let us write $\Delta_j = \mathrm{Spf}(\cO_{X, \xi_j}^\wedge)$. Write $L_j = \text{Frac}(\cO_{X, \xi_j}^\wedge)$. Then, for each $1 \leq j \leq r$, $L_j$ is a CDVF of mixed characteristic $(0,p)$ with imperfect residue field having finite $p$ basis. We have the morphisms $f_j : \Delta_j \rightarrow X$ of $p$-adic formal schemes for each $j = 1,2, \cdots, r$.  In particular, we have $f_j : (\Delta_j)_{\eta} \rightarrow \cX=X_{\eta}$ for each $j$. 

\begin{thm}(Thm. 1.6 of \cite{DLMSII})\label{purity-thm}
A $\Z_p$-local system $\bL$ over $\cX$ is semistable if and only if its pullback $f_j^\star \bL$ along the map $f_j$, regarded as $\Z_p$-representation of $\Gal_{L_j}$, is semistable for each $j =1,2,\cdots, r$.
 \end{thm}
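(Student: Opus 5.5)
The plan is to prove the two implications separately. The ``only if'' direction is formal; the ``if'' direction carries the content. For the easy direction, each $f_j : \Delta_j \to X$ is a morphism of $p$-adic formal schemes, and it upgrades to a morphism of log formal schemes $(\Delta_j, M_{\Delta_j}) \to (X, M_X)$ once $\Delta_j$ is given the log structure pulled back from $M_X$. It therefore induces a pullback functor on analytic prismatic $F$-crystals $\text{Vect}^{\text{an}, \varphi}((X, M_X)_{\Prism}) \to \text{Vect}^{\text{an}, \varphi}((\Delta_j, M_{\Delta_j})_{\Prism})$. The étale realization functor $T_X$ of Def.~\ref{etale-realization} is built by restricting to $\mathrm{Spec}(A)\setminus V(I)$ and completing. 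Both operations commute with base change of log prisms, so $T_{\Delta_j}\circ f_j^\star \cong f_j^\star \circ T_X$. Hence if $\bL = T_X(\cE_{\Prism})$, then $f_j^\star\bL = T_{\Delta_j}(f_j^\star\cE_{\Prism})$ is semistable as a representation of $\Gal_{L_j}$.

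For the converse, I would work entirely with descent data. Let $(\cM, \varphi_{\cM}, f_{\text{ét}}) \in \textbf{DD}_{\cO_{\cE}}$ correspond to $\bL$ via Prop.~\ref{DD-OE}. For each $j$, semistability of $f_j^\star\bL$ gives a finite free Breuil--Kisin module $\fM_j$ over the Breuil--Kisin ring $\fS_{L_j}$ of $L_j$, with a Kisin descent datum realizing $\cO_{\cE_{L_j}}\otimes_{\cO_{\cE}}\cM$. The height-one primes of $\fS$ that contain $p$ correspond exactly to the generic points $\xi_j$ of the special fibre, and the localization of $\fS$ at such a prime, once completed, maps into $\fS_{L_j}$. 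I would define the candidate
$$\fM := \cM \cap \bigcap_{j=1}^r \fM_j,$$
where the intersection is taken inside $\prod_j \cO_{\cE_{L_j}}\otimes_{\cO_{\cE}}\cM$. One first has to check that this is a finitely generated $\fS$-module with $\fM=\fM[p^{-1}]\cap\fM[E^{-1}]$. This is a reflexive hull: $\fM$ should be $j_\star$ of a coherent sheaf on $\mathrm{Spec}(\fS)\setminus V(p,E)$. On $\mathrm{Spec}\,\fS[E^{-1}]$ that sheaf is determined by $\cM$, via Beauville--Laszlo gluing along $\fS[E^{-1}] \to \fS[E^{-1}]^\wedge_p$ with the local lattices $\fM_j$. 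On $\mathrm{Spec}\,\fS[p^{-1}]$ it is the corresponding $\fS[p^{-1}]$-lattice.

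The main obstacle is showing that $\fM[p^{-1}]$ is projective and that $\varphi_{\fM}$ has bounded $E$-height; the rest is formal once these hold. My first attempt would be a Kisin-style argument: show that the Frobenius pullbacks $\varphi^{n\star}$ of the lattice converge, so that the $\varphi$-module over $\fS[p^{-1}]$ extends over the boundary. The semistability of each $f_j^\star\bL$ is what guarantees that the ``monodromy'' is log-nilpotent along $u=0$, so that no essential singularity appears. Local freeness would then be checked on the closed fibre by Zariski--Nagata purity for the regular ring $\fS$, reducing to the discrete valuation rings at the $\xi_j$, where the $\fM_j$ are free. The $E$-height bound follows because $\mathrm{coker}(\varphi_{\fM}^\star)$ is killed by $E^r$ after localizing at every height-one prime: at the $\xi_j$ this comes from $\fM_j$, and elsewhere from étaleness of $\cM$. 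Reflexivity then upgrades this to the statement for $\fM$ itself.

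Finally, the descent isomorphism $f$ over $\fS^{(1)}$ must be obtained by restricting $f_{\text{ét}}$. The plan is to show that $f_{\text{ét}}$ maps $\fS^{(1)}\otimes_{p_1}\fM$ into $\fS^{(1)}\otimes_{p_2}\fM$ by the same intersection description, now applied to $\fS^{(1)}$. Concretely, this requires an intersection equality of the form
$$\fS^{(1)} = \fS^{(1)}[E^{-1}]^\wedge_p \cap \bigcap_j \fS_{L_j}^{(1)}$$
together with compatibility with the $j$-th Kisin descent data. The cocycle condition is then inherited from $f_{\text{ét}}$ by injectivity. Lem.~3.8 of \cite{DLMSII} turns the resulting $(\fM,\varphi_{\fM},f)\in\textbf{DD}_{\fS}$ into an analytic prismatic $F$-crystal whose étale realization is $\bL$, which proves that $\bL$ is semistable.
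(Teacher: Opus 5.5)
Your ``only if'' direction is fine: the log morphism $(\Delta_j,M_{\Delta_j})\to(X,M_X)$ pulls back analytic prismatic $F$-crystals compatibly with the \'etale realization. The paper gives no proof of this statement; it imports Thm.~1.6 of \cite{DLMSII} as a black box. The ``if'' direction is the whole content of that theorem, and your sketch has a genuine gap exactly at the step you call the main obstacle.

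\textbf{The geometry is misidentified.} For connected $R$, $\fS/p$ is a normal domain; for $R=R^0$ it is the $(X_1\cdots X_r)$-adic completion of $k[X_1,\dots,X_r,X_{r+1}^{\pm1},\dots,X_b^{\pm1}]$. So $(p)$ is the only height-one prime of $\fS$ containing $p$. Its completed localization maps to $\cO_{\cE,L_j}$, not to $\fS_{L_j}$, since $u$ becomes invertible there. Under $\iota_j$ (with $X_j\mapsto u(\prod_{i\neq j}X_i)^{-1}$), the closed point $(p,u)$ of $\mathrm{Spec}\,\fS_{L_j}$ goes to a generic point of $V(p,E)=V(p,u)$. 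For $R=R^0$ this is the height-two prime $(p,X_j)$. So the $\fM_j$ only constrain $\fM$ on punctured formal neighbourhoods of generic points of the locus that analytic $F$-crystals exclude.

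\textbf{The freeness step does not work.} ``Checking local freeness at the DVRs at the $\xi_j$ via Zariski--Nagata purity'' is not a valid argument. Zariski--Nagata is purity of the branch locus of finite covers, not a freeness criterion for modules. Torsion-free modules are automatically free at height-one primes. A reflexive module over a regular ring of dimension $\geq 3$ need not be projective, and here $\dim\fS[p^{-1}]=b$. So the non-projective locus of your reflexive hull on $\mathrm{Spec}(\fS)\setminus V(p,E)$ is closed of codimension $\geq 3$. Its closure may meet $V(p,E)$ at points of codimension $\geq 3$ (e.g.\ where several $X_j$ vanish), and nothing in your argument excludes this.

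Showing that $\fM[p^{-1}]$ and $\fM[E^{-1}]$ are projective \emph{is} the purity theorem. The remarks about convergence of Frobenius pullbacks and ``log-nilpotent monodromy'' do not supply a mechanism for it. The Beauville--Laszlo description is circular, because it presupposes the $\fS[p^{-1}]$-lattice whose existence is in question.

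\textbf{The descent step has the same problem.} You need the identity $\fS^{(1)}=\fS^{(1)}[E^{-1}]^\wedge_p\cap\bigcap_j\fS^{(1)}_{L_j}$. You also need the formation of $\fM$ to commute with base change along $p_1,p_2$ and $\iota_j$. In particular you need $\fS_{L_j}\otimes_{\fS}\fM\cong\fM_j$, which does not follow from defining $\fM$ as an intersection.
\begin{itemize}
\item The identity is only asserted. Compare Lem.~\ref{intersection-equality}, which treats just the single map $\fS^{(1)}_L\to\fS^{(1)}_{L_g}$ and still needs an argument.
\item The base-change statements need flatness together with the projectivity you have not established.
\item Your $E$-height argument invokes ``\'etaleness of $\cM$'' at height-one primes not containing $p$ or $E$. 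There $\cM$, which is defined over a $p$-adic completion, gives no information.
\end{itemize}
As it stands, the proposal produces the natural candidate $\fM$, but not a proof that it underlies an object of $\textbf{DD}_{\fS}$.
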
 
 
 \noindent Now, if a $\Z_p$-local system $\bL$ over $\cX$ is of finite $E$-height then for each $j =1,2,\cdots, r$ the pullback $f_j^\star \bL$ is finite $E$-height $\Z_p$-representation of $\Gal_{L_j}$. So, we can apply Thm. \ref{main-thm-2} to deduce that the restrictions $f_j^\star \bL|_{\Gal_{L_j(\pi_m)}}$ are semistable $\Z_p$-representations of $\Gal_{L_j(\pi_m)}$ for all $1 \leq j \leq r$. The rest of the argument to prove Thm. \ref{main-thm-1} relies on the fact that the stalk of $\iota_m^\star \bL$ at $\xi_j$ as a Galois representation of $\Gal_{L_{m,j}}$ is identified with the restriction of the Galois representation associated to the stalk of $\bL$ at $\xi_j$ to $\Gal_{L_{m,j}}$; where $L_{m,j}= \text{Frac}(\cO_{X_m, \xi_j}^\wedge)=L_j(\pi_m)$.
 
\subsection{Organization:}
 Let us describe the organization of the rest of the paper. In Sec. \ref{sec-2}, we set up the basic notations and recall the elementary facts about absolute log prismatic site (see Subsec. \ref{sec-2-2} and \ref{sec-2-3}). We also recall various linear algebra categories and their equivalences with $\Z_p$-Galois representations in Subsec. \ref{cat-eq}. Moreover, we describe the same for finite $E$-height representations and semistable $\Z_p$-representations in imperfect residue field case (see Subsec. \ref{sec-2-5}). Sec. \ref{sec-3} is devoted to describe the dictionary between the restriction of Galois representations and pullback of descent data. We first treat the perfect residue field case (see Subsec. \ref{perf-res-dic})and consequently derive imperfect residue field case (see Subsec. \ref{imperf-res-dic}). We prove Thm. \ref{main-thm-2} in Sec. \ref{Sec-4}. In Sec. \ref{sec-5}, we consider the small affine case. We set up the small affine case in Subsec. \ref{sec-5-1} and discuss the equivalence between pullback of descent data and the pullback of $\Z_p$-local system in Subsec. \ref{sec-5-2}. Finally, we complete the proof of Thm. \ref{main-thm-1} in Subsec. \ref{sec-5-3}.
 
 \vspace{15pt}
 
\noindent \textbf{Acknowledgement:} The author is grateful to Abhinandan for carefully answering questions about Prop. \ref{prop-imp-perf-st} and for several useful comments. The author sincerely thanks Hui Gao for his invaluable suggestions. He also thanks Aditya Karnataki, Aprameyo Pal, Arnab Saha, and Jishnu Roy for many fruitful discussions. Part of this work was carried out during the author's visit to HRI. The author thanks HRI for its warm hospitality. This work was supported by the Prime Minister's Research Fellowship (PMRF), Govt. of India. 

\section{Preliminaries}\label{sec-2}

\subsection{Basic Set up and Notation:}\label{sec-2-1}
Let $p$ be a fixed odd prime. Let $K$ be a finite extension of $\Q_p$ with ring of integers $\cO_K$ and $k$ be its residue field. Choose a uniformizer $\pi$ in $\cO_K$. The maximal unramified extension of $\Q_p$ inside $K$ is $K_0:=W(k)[1/p]$. Let the ramification index $[K : K_0] = e$. We write $E(u) \in W(k)[u]$ for the minimal polynomial of $\pi$. Fix a $p$-power compatible sequence $\overrightarrow{\pi} =\{ \pi, \pi^{1/p}, \pi^{1/p^2}, \cdots \}$ of $\pi$ inside a fixed algebraic closure $\overline{K}$. For this choice, we have the Kummer tower $K_{\infty}=\cup_{n} K_n$; where, $K_n=K(\pi^{1/p^n})$. We write $\pi_n := \pi^{1/p^n}$ in short. Note that it is a totally ramified tower and the completion of $K_{\infty}$ is a perfectoid field.\smallskip

\noindent Consider the imperfect field $k(X_1, X_2, \cdots, X_b)$ in characteristic $p$. Let $\cO_{L_0}$ be a Cohen ring with residue field $k(X_1, X_2, \cdots, X_b)$ and $L_0=\cO_{L_0}[1/p]$. We equip $\cO_{L_0}$ with a Frobenius $\varphi$ such that $\varphi(X_i)=X_i^p$ for $1 \leq i \leq b$. We write $\cO_{L}=\cO_K \otimes_{W(k)} \cO_{L_0}$. So, $\pi$ is a uniformizer in $\cO_L$. Its fraction field $L:=\text{Frac}(\cO_{L})$ is a complete discretely valued field of mixed characteristic $(0,p)$ with imperfect residue field $k(X_1, X_2, \cdots, X_b)$.  We furthermore fix a $p$-power compatible sequence $\overrightarrow{X_i} = \{ X_{i}, X_i^{1/p}, X_i^{1/p^2}, \cdots \}$ for every $X_i$ inside a fixed algebraic closure $\overline{L}$. With respect to this choice, we have  the tower $$L_{\infty} = \cup_{n} L(\pi^{1/p^n}, X_1^{1/p^n}, X_2^{1/p^n}, \cdots, X_b^{1/p^n}).$$ 
Note that, this tower is \textit{not} totally ramified as the corresponding residue fields are not same. There is the totally ramified subtower $L_{\infty}^{\text{imp}}=\cup_n L(\pi_n)$ inside $L_{\infty}$.

\begin{exa}\label{example2}
Consider $\tilde{R^0}:=W(k) \langle X_1, X_2, \cdots, X_r, X_{r+1}^{\pm}, \cdots X_{b}^{\pm} \rangle / (X_1. X_2 \dots X_r - p)$ and $R^0=\cO_K \otimes_{W(k)} \tilde{R^0}$. For each $ 1 \leq j \leq r$, let $(\tilde{R^0})^\wedge_{(X_j)}$ denote the $p$-adic completion of the localization of $\tilde{R^0}$ at $(X_j)$. Then, $(\tilde{R^0})^\wedge_{(X_j)}$ is a Cohen ring with residue field $k(X_1, X_2, \cdots, X_{j-1}, X_{j+1}, \cdots, X_b)$. So, the $p$-adic completion $(R^0)^\wedge_{(X_j)}$ of the localization of $R^0$ at $(X_j)$ (resp. $(R^0)^\wedge_{(X_j)}[1/p]$) is a complete discretely valued ring (resp. field) of mixed characteristic $(0,p)$ with uniformizer $\pi$ and with imperfect residue field $k(X_1, X_2, \cdots, X_{j-1}, X_{j+1}, \cdots, X_b)$. We write $\cO_{L_{0,j}}=(\tilde{R^0})^\wedge_{(X_j)}$ and $\cO_{L_j}=(R^0)^\wedge_{(X_j)}$ for each $1 \leq j \leq r$.
\end{exa}

\noindent We are interested in the $\Z_p$-lattices in finite dimensional continuous $p$-adic representations of the absolute Galois group $\Gal_L:=\Gal(\overline{L} / L)$ and denote the category by $\text{Rep}_{\Z_p}(\Gal_L)$. Similarly, write the category $\text{Rep}_{\Z_p}(G_{L_{\infty}})$ for $\Z_p$-representations of $G_{L_{\infty}}$. Let $\mathfrak{S}_K:=W(k)[[u]]$. Let us denote the $p$-adic completion of $\mathfrak{S}_K[E(u)^{-1}]$ by $ \mathcal{O}_{\mathcal{E},K}$. We also write $\fS_L=\cO_{L_0} [[u]]$ and $\cO_{\cE,L} = \fS_L[E(u)^{-1}]^\wedge_p$. Both of $\fS_L$ and $\cO_{\cE,L}$ are equipped with the Frobenius endomorphisms $\varphi_{\fS_L}$ and $\varphi_{\cO_{\cE,L}}$; defined by $u \mapsto u^p$ and $X_i \mapsto X_i^p$. Let $\overline{\mathcal{O}_L}$ be the union of finite $\mathcal{O}_L$-subalgebras of the fixed algebraic closure $\overline{L}$ of $L$. Let $\overline{\mathcal{O}_L}^{\flat}$ be the tilt of the $p$-adic completion of $\overline{\mathcal{O}_L}$. We have a $\varphi$-equivariant embedding $\mathfrak{S}_L \rightarrow W(\overline{\mathcal{O}_L}^{\flat}[(\pi^{\flat})^{-1}])$ by $u \mapsto [\pi^{\flat}]$ and $X_i \mapsto [X_i^\flat]$. This extends to an embedding $\mathcal{O}_{\mathcal{E},L} \rightarrow W(\overline{\mathcal{O}_L}^{\flat}[(\pi^{\flat})^{-1}])$. Let $\mathcal{O}_{\mathcal{E},L}^{ur}$ be the union of finite étale $\mathcal{O}_{\mathcal{E},L}$-subalgebras of $W(\overline{\mathcal{O}_L}^{\flat}[(\pi^{\flat})^{-1}])$. Denote by $\widehat{\mathcal{O}_{\mathcal{E},L}^{ur}}$ its $p$-adic completion. Similarly, $\widehat{\mathcal{O}_{\mathcal{E},K}^{ur}}$ is defined replacing $\overline{\cO_L}$ by $\cO_{\overline{K}}$.\medskip

\begin{defn}
A finite height Breuil-Kisin module $\mathfrak{M}_L$ over $\mathfrak{S}_L$ is a finite free $\mathfrak{S}_L$-module equipped with a Frobenius semilinear endomorphism $\varphi_{\mathfrak{M}_L}$ such that the cokernel of its Frobenius linearization $\varphi^*_{\mathfrak{M}_L}=1 \otimes \varphi_{\mathfrak{M}_L}$ on $\mathfrak{S}_L \otimes_{\varphi, \mathfrak{S}_L} \mathfrak{M}_L$ is killed by some power of $E(u)$. In other words, there is a non-negative integer $r$ such that $(E(u))^r (\mathfrak{M}_L) \subset \im (\varphi_{\mathfrak{M}_L}^*)$. The category of finite height Breuil-Kisin modules is denoted by $\text{Mod}^{\varphi}_{\fS_L}$.  
\end{defn}

\begin{defn}\label{def-1}
A lattice $T\in \text{Rep}_{\Z_p}(\Gal_L)$ is said to be of finite $E(u)$-height (w.r.t. $\overrightarrow{\pi}$ and $\overrightarrow{X_i}$) if there exists a finite height Breuil-Kisin module $\mathfrak{M}_L$ over $\mathfrak{S}_L$ such that $$(\mathfrak{M}_L \otimes_{\mathfrak{S}_L} \widehat{\mathcal{O}_{\mathcal{E},L}^{ur}})^{\varphi=1} = T|_{G_{L_{\infty}}}.$$
\end{defn}

\begin{rem}
This is analogous to Def. 1.1.15 of \cite{Gao} in perfect residue field case. For our purpose, we want to define finite $E(u)$-height $\Z_p$-lattice in an alternative but equivalent way (see Def. \ref{def-2} below).
\end{rem} 

\noindent Let us denote the category of free $\Z_p$-lattices in $p$-adic representations of $\Gal_{L_{\infty}}$ by $\text{Rep}_{\Z_p}^{pr}(\Gal_{L_{\infty}})$. 

\begin{defn}
An étale $(\varphi, \mathcal{O}_{\mathcal{E},L})$-module $(\cM_L, \varphi_{\cM_L})$ consists of a finitely generated module $\mathcal{M}_L$ over $\mathcal{O}_{\mathcal{E},L}$ equipped with a $\varphi$-semilinear endomorphism $\varphi_{\mathcal{M}_L}$ whose $\varphi$-linearization $1 \otimes \varphi_{\mathcal{M}_L}$ on $\mathcal{O}_{\mathcal{E},L} \otimes_{\varphi, \mathcal{O}_{\mathcal{E},L}} \mathcal{M}_L$ is an isomorphism. The category of étale $(\varphi, \cO_{\cE,L})$-modules is denoted by $\text{Mod}^{\varphi}_{\cO_{\cE,L}})$. It is called projective if the underlying $\mathcal{O}_{\mathcal{E},L}$-module $\mathcal{M}_L$ is projective. Denote by $\text{Mod}^{pr,\varphi}_{\mathcal{O}_{\mathcal{E},L}}$ the category of projective étale $(\varphi, \mathcal{O}_{\mathcal{E},L})$-module.
\end{defn} 

\noindent We have the following equivalence due to \cite{Kim}. 
\begin{thm}\label{Kim}(Prop. 7.7 of \cite{Kim} and Prop. 2.5 of \cite{LM20})
The exact tensor functors $\mathcal{T}$ from $\text{Mod}^{pr,\varphi}_{\mathcal{O}_{\mathcal{E},L}}$ to $\text{Rep}_{\Z_p}^{pr}(\Gal_{L_{\infty}})$ defined by $\mathcal{T}(\mathcal{M}_L):= (\widehat{\mathcal{O}_{\mathcal{E},L}^{ur}} \otimes_{\mathcal{O}_{\mathcal{E},L}} \mathcal{M}_L)^{\varphi=1}$ and $\mathcal{M}$ from $\text{Rep}_{\Z_p}^{pr}(\Gal_{L_{\infty}})$ to $\text{Mod}^{pr,\varphi}_{\mathcal{O}_{\mathcal{E},L}}$ defined by $\mathcal{M}(T):=(\widehat{\mathcal{O}_{\mathcal{E},L}^{ur}} \otimes_{\Z_p} T)^{\Gal_{L_{\infty}}}$ are quasi-inverse of each other.
\end{thm}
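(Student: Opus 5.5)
The plan is to follow Fontaine's classical argument for étale $\varphi$-modules, adapted to the imperfect residue field via the tilt of $\widehat{L_{\infty}}$. The central step is to establish, for every projective étale $(\varphi,\cO_{\cE,L})$-module $\cM_L$ and every $T\in\text{Rep}_{\Z_p}^{pr}(\Gal_{L_\infty})$, the comparison isomorphisms
$$\widehat{\cO_{\cE,L}^{ur}}\otimes_{\Z_p}\mathcal T(\cM_L)\xrightarrow{\ \sim\ }\widehat{\cO_{\cE,L}^{ur}}\otimes_{\cO_{\cE,L}}\cM_L,\qquad \widehat{\cO_{\cE,L}^{ur}}\otimes_{\cO_{\cE,L}}\mathcal M(T)\xrightarrow{\ \sim\ }\widehat{\cO_{\cE,L}^{ur}}\otimes_{\Z_p}T,$$
compatible with $\varphi$ and $\Gal_{L_\infty}$. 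Once these hold, the two functors are quasi-inverse by taking $\varphi$-invariants and $\Gal_{L_\infty}$-invariants respectively. Exactness and the tensor property also follow from these isomorphisms, since $\widehat{\cO_{\cE,L}^{ur}}$ is faithfully flat over both $\Z_p$ and $\cO_{\cE,L}$.

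\textbf{Preliminary structural facts.}
\begin{enumerate}[(a)]
\item $\cO_{\cE,L}$ is a complete discrete valuation ring with uniformizer $p$. Its residue field is $E_L:=\cO_{L_0}/p\,((u))$, an imperfect field of characteristic $p$. Consequently every projective $\cO_{\cE,L}$-module is free.
\item The image of $\cO_{\cE,L}/p$ in $\overline{\cO_L}^\flat[(\pi^\flat)^{-1}]$ is $E_L$, with $u\mapsto\pi^\flat$ and $X_i\mapsto X_i^\flat$. The completed perfection of $E_L$ is the tilt of $\widehat{L_\infty}$, which is perfectoid precisely because the tower adjoins $p$-power roots of both $\pi$ and all the $X_i$.
\item By the tilting equivalence (Fontaine–Wintenberger in Scholze's form), together with the invariance of Galois groups under purely inseparable extensions and completion, we get $\Gal_{L_\infty}\cong\Gal_{E_L}$.
\item Under this identification, $\cO_{\cE,L}^{ur}/p=E_L^{sep}$. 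Hence $\widehat{\cO_{\cE,L}^{ur}}$ is the $p$-adically complete strict henselization-type lift, with $(\widehat{\cO_{\cE,L}^{ur}})^{\Gal_{L_\infty}}=\cO_{\cE,L}$ and $(\widehat{\cO_{\cE,L}^{ur}})^{\varphi=1}=\Z_p$.
\end{enumerate}
The second invariant computation reduces modulo $p$ to the fact that $x^p=x$ has only $\mathbb F_p$ as solutions in $E_L^{sep}$, and then lifts by completeness.

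\textbf{Proof of the comparison by dévissage.}
\begin{enumerate}
\item \emph{Modulo $p$.} Let $M$ be an étale $\varphi$-module over $E_L$ with basis $e$ and $\varphi(e)=eA$, $A\in GL_d(E_L)$. The equations $\varphi(x)=x$ for $x=eY$ read $Y^{(p)}=A^{-1}Y$ coordinatewise. They define a finite étale $E_L$-algebra, because the Jacobian is invertible. Solving them in $E_L^{sep}$ produces $d$ linearly independent fixed vectors by the standard Lang-type argument. This gives $E_L^{sep}\otimes_{\mathbb F_p}M^{\varphi=1}\cong E_L^{sep}\otimes M$.
\item \emph{Inverse direction modulo $p$.} Galois descent for $E_L^{sep}/E_L$ (Hilbert 90) gives $E_L^{sep}\otimes_{E_L}(E_L^{sep}\otimes T)^{\Gal}\cong E_L^{sep}\otimes T$.
\item \emph{Lifting to $p^n$.} Apply the exact sequences $0\to M/p^{n-1}\to M/p^n\to M/p\to 0$, using two surjectivity inputs:
\begin{itemize}
\item $\varphi-1$ is surjective on $E_L^{sep}$ (Artin–Schreier), and
\item $H^1(\Gal_{E_L},E_L^{sep})=0$.
\end{itemize}
Together with the five lemma, these give the comparison modulo every $p^n$.
\item \emph{Passing to the limit.} Take $\varprojlim_n$. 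There are no $\lim^1$ issues, since all terms are finite length over $\Z/p^n$ on the $T$ side.
\end{enumerate}
Finally, freeness of $\cM_L$ and of $T$ makes the ranks match.

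\textbf{Expected main obstacle.} I expect the main difficulty to be step (b)–(c): correctly identifying $\Gal_{L_\infty}$ with $\Gal_{E_L}$, and identifying the ring $\cO_{\cE,L}^{ur}$ (defined as the union of finite étale subalgebras inside $W(\overline{\cO_L}^\flat[(\pi^\flat)^{-1}])$) with the étale closure reducing to $E_L^{sep}$. This requires the imperfect-residue-field tilting computation that $\widehat{E_L^{perf}}\cong\widehat{L_\infty}^\flat$. I would first verify it at finite level, showing that $\cO_{L}(\pi_n,X_i^{1/p^n})/p$ corresponds to $E_L^{1/p^n}$ modulo the appropriate ideal, and then take colimits. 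The remaining steps are routine Fontaine-style bookkeeping.
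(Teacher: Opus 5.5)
Your proposal is correct. It does not follow the paper, because the paper gives no proof of this statement: it simply imports it from Kim (Prop.~7.7) and Liu--Moon (Prop.~2.5).

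\textbf{The cited route.} Those results are set up for the relative situation, where $\cO_{\cE,R}/p$ is an $\mathbb{F}_p$-algebra rather than a field. Roughly:
\begin{itemize}
\item the mod $p$ step is a Katz-type equivalence between \'etale $\varphi$-modules over an $\mathbb{F}_p$-algebra and \'etale $\mathbb{F}_p$-local systems on its spectrum, followed by d\'evissage;
\item the Galois group of the tower is matched with the \'etale fundamental group of $\mathrm{Spec}(\cO_{\cE,R}/p)$ by a relative tilting/almost purity argument.
\end{itemize}
This is also why the statement is phrased with projective modules.

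\textbf{Your route.} You use the special feature of the CDVF case: $\cO_{\cE,L}$ is a complete discrete valuation ring with uniformizer $p$ and residue field $E_L=k(X_1,\dots,X_b)((u))$. So Fontaine's original field-theoretic argument runs verbatim:
\begin{itemize}
\item Lang's trick for $Y^{(p)}=A^{-1}Y$;
\item Artin--Schreier and additive Hilbert~90;
\item d\'evissage and passage to the limit.
\end{itemize}
The only new input is the identification $\widehat{L_\infty}^\flat\cong\widehat{E_L^{\mathrm{perf}}}$, compatible with $u\mapsto\pi^\flat$ and $X_i\mapsto X_i^\flat$. You correctly note that this needs the $p$-power roots of the $X_i$: the completion of the subtower $L^{\mathrm{imp}}_\infty$ alone is not perfectoid, since $k(X_1,\dots,X_b)$ is imperfect.

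\textbf{What each buys.} Your argument is self-contained and elementary, and it makes projective $=$ free automatic. The cited argument is the one that works over the relative rings $\cO_{\cE,R}$. There, counting $p^d$ solutions over a separable closure, or Hilbert~90 over a field, must be replaced by finite \'etale torsors \`a la Katz.

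\textbf{Two small fixes.}
\begin{itemize}
\item In step~1, $M^{\varphi=1}$ should read $(E_L^{\mathrm{sep}}\otimes_{E_L}M)^{\varphi=1}$.
\item In step~3, the surjectivity you actually need is that of $\varphi-1$ on $E_L^{\mathrm{sep}}\otimes_{E_L}M/p$, not just on $E_L^{\mathrm{sep}}$. This follows from step~1 together with Artin--Schreier.
\end{itemize}
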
  

\noindent For any object $T$ in $\text{Rep}_{\Z_p}(\Gal_{L_{\infty}})$, we can associate the object $\mathcal{M}(T)$ in $\text{Mod}_{\mathcal{O}_{\mathcal{E},L}}$. Now we define a finite $E(u)$-height $\Z_p$-lattice in $\text{Rep}_{\Z_p}(\Gal_{L_{\infty}})$ as follows.

\begin{defn}\label{def-2}
An object $T$ in $\text{Rep}_{\Z_p}(\Gal_{L_{\infty}})$ is said to be of finite $E(u)$-height if there exists an object $(\fM_L, \varphi_{\fM_L})$ in $\text{Mod}_{\fS_L}^{\varphi}$ such that the associated étale  $(\varphi, \mathcal{O}_{\mathcal{E},L})$-module $\cM(T):=\mathcal{M}_L$ can be written as $\cM_L= \mathfrak{M}_L \otimes_{\mathfrak{S}_L} \mathcal{O}_{\mathcal{E},L}$. A $\Z_p$-representation $T$ of $\Gal_L$ is called of finite $E(u)$-height if its restriction $T|_{\Gal_{L_{\infty}}}$ is so.
\end{defn}

\noindent Note that Def. \ref{def-2} depends on the fixed choice of $\overrightarrow{\pi}$ and $\overrightarrow{X_i}$. Indeed, it determines the inclusion $\fS_L \rightarrow \cO_{\cE,L}$ defined by $u \mapsto [\pi^\flat], X_i \mapsto [X_i^\flat]$. It is easy to see that the Def. \ref{def-1} and Def. \ref{def-2} are equivalent. \smallskip

\noindent If a $\Z_p$-lattice $T$ in   $\text{Rep}_{\Z_p}(\Gal_{L_{\infty}})$ is of finite $E(u)$-height in the sense of above definition, then $\mathfrak{M}_L \otimes_{\mathfrak{S}_L} \mathcal{O}_{\mathcal{E},L}$ is a finite free étale $(\varphi, \mathcal{O}_{\mathcal{E},L})$-module and $T=\mathcal{T}(\mathfrak{M}_L \otimes_{\mathfrak{S}_L} \mathcal{O}_{\mathcal{E},L})$. So, by Thm. \ref{Kim}, $T$ is finite free $\Z_p$-lattice in $\text{Rep}_{\Z_p}(\Gal_{L_{\infty}})$. A $p$-adic representation $V$ of $\Gal_L$ is called of finite $E(u)$-height if $V$ contains a $\Gal_L$-stable $\Z_p$-lattice $T$ which is of finite $E(u)$-height. We are now ready to mention our primary result. 

\noindent

\begin{prop}\label{mainthm-prelim}
Let $L$ be a complete discretely valued field of mixed characteristic $(0,p)$ with residue field $k(X_1, X_2, \cdots, X_b)$. Then, any finite height $p$-adic representation of $\Gal_L$ is potentially semistable. 
\end{prop}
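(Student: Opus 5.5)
The plan is to reduce to the perfect residue field case via the field $L_g$ mentioned in Subsec. \ref{sec-1-3}, and then to use Morita's criterion (Thm. 1.1 of \cite{Mor}). That criterion says a $p$-adic representation $V$ of $\Gal_L$ is potentially semistable if and only if its restriction along $\Gal_{L_g} \rightarrow \Gal_L$ is potentially semistable.

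\textbf{Construction of $L_g$ and the embedding.} Let $L_g$ be the $p$-adic completion of $\varinjlim_n L(X_1^{1/p^n}, \dots, X_b^{1/p^n})$, using the fixed compatible systems $\overrightarrow{X_i}$. Equivalently, $L_g$ is the fraction field of $\cO_K \otimes_{W(k)} W(k')$, where $k'$ is the perfection of $k(X_1,\dots,X_b)$. It is a CDVF with perfect residue field $k'$, uniformizer $\pi$, and the same Eisenstein polynomial $E(u)$. Fix an embedding $\overline{L} \hookrightarrow \overline{L_g}$ extending $L \hookrightarrow L_g$, compatible with the chosen $\pi_n$ and $X_i^{1/p^n}$. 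This gives $\Gal_{L_g} \rightarrow \Gal_L$, and it carries $\Gal_{(L_g)_\infty}$ into $\Gal_{L_\infty}$, where $(L_g)_\infty = L_g(\pi_n : n \geq 1)$.

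\textbf{Base change of the Breuil--Kisin module.} Let $V$ be of finite height, with lattice $T$ and Breuil--Kisin module $\fM_L$ as in Def. \ref{def-1}, so that $T|_{\Gal_{L_\infty}} = (\fM_L \otimes_{\fS_L} \widehat{\cO_{\cE,L}^{ur}})^{\varphi=1}$. There is a $\varphi$-equivariant map $\fS_L = \cO_{L_0}[[u]] \rightarrow \fS_{L_g} = W(k')[[u]]$, coming from $\cO_{L_0} \rightarrow W(k')$, which sends $X_i \mapsto [X_i^\flat]$. It is compatible with the embeddings into $W(\overline{\cO_L}^\flat[(\pi^\flat)^{-1}]) \rightarrow W(\cO_{\C_p(L_g)}^\flat[(\pi^\flat)^{-1}])$, and it extends to $\widehat{\cO_{\cE,L}^{ur}} \rightarrow \widehat{\cO_{\cE,L_g}^{ur}}$. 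Set $\fM_{L_g} := \fS_{L_g} \otimes_{\fS_L} \fM_L$. This is finite free, and its Frobenius cokernel is still killed by $E^r$.

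The natural map
$$ T|_{\Gal_{(L_g)_\infty}} = (\fM_L \otimes \widehat{\cO_{\cE,L}^{ur}})^{\varphi=1} \rightarrow (\fM_{L_g} \otimes \widehat{\cO_{\cE,L_g}^{ur}})^{\varphi=1} $$
is $\Gal_{(L_g)_\infty}$-equivariant. Both sides are free $\Z_p$-modules of the same rank, and the map is an isomorphism after tensoring with $\widehat{\cO_{\cE,L_g}^{ur}}$ by étaleness. Hence it is an isomorphism, using Thm. \ref{Kim} on both sides. So $T|_{\Gal_{L_g}}$ is of finite $E$-height in the sense of Def. \ref{fin-defn-classical} for the perfect residue field $L_g$.

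\textbf{Conclusion.} Thm. \ref{Gao-main-perf} applies because $L_g$ is a CDVF with perfect residue field; Gao's proof only uses this. It gives that $V|_{\Gal_{L_g}}$ is potentially semistable, indeed semistable over $L_g(\pi_m)$. Morita's theorem then gives that $V$ is potentially semistable as a representation of $\Gal_L$.

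\textbf{Main obstacle.} I expect the hardest step to be the compatibility check in the base change. One must verify that the period ring map $\widehat{\cO_{\cE,L}^{ur}} \rightarrow \widehat{\cO_{\cE,L_g}^{ur}}$ is well defined and Galois equivariant for the chosen embedding. One must also verify that $\Gal_{L_g}$ surjects onto a subgroup that Morita's criterion accepts, which is where the $p$-basis hypothesis $k(X_1,\dots,X_b)$ enters. The first thing to try is to realize both rings inside $W(\widehat{\overline{L_g}}^\flat)$ and use the fact that $\overline{\cO_L}^{\flat} \rightarrow \cO_{\widehat{\overline{L_g}}}^\flat$ is compatible with the $[X_i^\flat]$.
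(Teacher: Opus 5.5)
Your proposal is correct and follows the paper's proof. Both pass to $L_g$, base change $\fM_L$ along $\fS_L\to\fS_{L_g}$ (with $X_i\mapsto[X_i^\flat]$) to see that $T|_{\Gal_{L_g}}$ has finite $E$-height, apply Gao's theorem over the perfect-residue-field CDVF $L_g$, and conclude with Morita's Thm.~1.1. The only difference is minor: you verify the finite-height transfer directly, by comparing $\varphi$-invariants through the compatible period-ring map and faithful flatness over $\Z_p$. The paper instead uses the identification $\Gal_{L_\infty}\cong\Gal_{L_{g,\infty}}$ together with the equivalence of étale $\varphi$-module categories in Prop.~4.2.5 of \cite{Gao-imp}.
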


\begin{proof}
Consider a finite $E(u)$-height $p$-adic representation $V$ of $\Gal_L$. Let $k_g$ be the coperfection of $k(X_1, X_2, \cdots, X_b)$ i.e. $$k_g := \lim_{\varphi} k(X_1, X_2, \cdots, X_b)= k(X_1^{1/p^{\infty}}, X_2^{1/p^{\infty}}, \cdots, X_b^{1/p^{\infty}}).$$ Denote $\mathcal{O}_{L_0,g}=W(k_g)$ and $\mathcal{O}_{L_g}:=\mathcal{O}_{L_0,g} \otimes_{W(k)} \mathcal{O}_K$. Then $L_g =\text{Frac} (\mathcal{O}_{L_g})$ is a complete discretely valued field with perfect residue field $k_g$. Moreover, the non-algebraic extension $L \rightarrow L_g$ induces the continuous map $\Gal_{L_g} \rightarrow \Gal_L$ of the corresponding absolute Galois groups.\smallskip

\noindent Now let $\mathfrak{S}_{L_g}=\mathcal{O}_{L_{0, g}}[[u]]$. The natural map $\mathcal{O}_{L_0} \rightarrow \mathcal{O}_{L_{0, g}} : X_i \mapsto [X_i^\flat]$ induces a ring map $i_g : \mathfrak{S}_L \rightarrow \mathfrak{S}_{L_g}$. This moreover gives the ring extension $\cO_{\cE,L} \rightarrow \cO_{\cE, L_g}$. Let us denote the category of étale $\varphi$-module over $\cO_{\cE,L_g}$ by $Mod^{{\varphi}}_{\cO_{\cE,L_g}}$. Then, by Prop. 4.2.5 in \cite{Gao-imp}, the scalar extension functor from $Mod^{\varphi}_{\cO_{\cE,L}}$ to $Mod^{\varphi}_{\cO_{\cE,L_g}}$ is an equivalence.\smallskip

\noindent Let $T$ be a finite height $\Z_p$-lattice inside the given $p$-adic representation $V$ of $\Gal_L$. By Thm. \ref{Kim}, there is a finite free étale $\varphi$-module $\cM_L$ over $\cO_{\cE,L}$ associated to $T|_{\Gal_{L_{\infty}}}$. Note that, the image of $L_{\infty} \hookrightarrow L_{g, \infty}$ is dense i.e. $\Gal_{L_{\infty}} \cong \Gal_{L_{g, \infty}}$; where $L_{g,\infty}=\cup_{n\geq 1} L_g(\pi_n)$. Therefore, if the étale $\varphi$-module associated to $T|_{\Gal_{L_{g, \infty}}}$ is $(\cM_{L_g}, \varphi_{\cM_{L_g}})$, then $\cM_{L_g}=\cO_{\cE,L_g} \otimes_{\cO_{\cE,L}} \cM_L$. Since, $T$ is of finite $E(u)$-height, there exists a finite free $\mathfrak{S}_L$-module $\mathfrak{M}_L$ such that $\cM_L = \fM_L \otimes_{\fS_L} \cO_{\cE,L}$. Consider the $\mathfrak{S}_{L_g}$-module $\mathfrak{M}_{L_g}:=\mathfrak{M}_L \otimes_{\mathfrak{S}_L} \mathfrak{S}_{L_g}$. It is a finite free $\fS_{L_g}$-module. We have,
$$\mathcal{M}_{L_g} = \mathcal{M}_L \otimes_{\mathcal{O}_{\mathcal{E},L}} \mathcal{O}_{\mathcal{E},L_g}
=(\mathfrak{M}_L \otimes_{\mathfrak{S}_L} \mathcal{O}_{\mathcal{E},L_g})
=(\mathfrak{M}_L \otimes_{\mathfrak{S}_L} \mathfrak{S}_{L_g}) \otimes_{\mathfrak{S}_{L_g}} \mathcal{O}_{\mathcal{E},L_g}
=\mathfrak{M}_{L_g} \otimes_{\mathfrak{S}_{L_g}} \mathcal{O}_{\mathcal{E},L_g}.$$

\noindent Therefore, $T|_{\Gal_{L_g}}$ is of finite height $\Z_p$-representation of $\Gal_{L_g}$. By Thm. \ref{Gao-main-perf}, $V=T[1/p]|_{\Gal_{L_g}}$ is potentially semistable. By Thm. 1.1 of \cite{Mor}, $V$ is potentially semistable representation of $\Gal_L$. 
\end{proof}

\begin{rem}
As mentioned in Subsec. \ref{sec-1-3} the precise finite extension of $L$ over what $T$ is semistable $\Z_p$-representation cannot be known from the above proposition. Following Thm. \ref{Gao-main-perf} one can expect to get the desired finite extension along the Kummer (sub-)tower $L_{\infty}^{\text{imp}} = \cup_n L(\pi_n)$. This is given by Thm. \ref{mainthm-cdvf}. 
\end{rem}

\vspace{15pt}

\subsection{Absolute Log Prismatic Site over $\cO_{L}$}\label{sec-2-2}
\noindent In this section, we define the Breuil-Kisin log prism in the absolute log prismatic site $(Y, M_Y)_{\Prism}$ for the formal scheme $Y=\mathrm{Spf}(\cO_L)$ equipped with certain log structure $M_Y$ and describe its co-products. The details are referred to Subsec. 2.3 of \cite{DLMSII}. See also Sec. 5 of \cite{DL} for the same over $\cO_{K}$. \smallskip

 \noindent We consider the log formal scheme $(Y, M_Y)$; where, $Y=\mathrm{Spf} (\cO_{L})$ and the log structure $M_Y$ is given by the pre-log structure $\N \rightarrow \cO_L$ : $n \mapsto \pi^n$. We denote the absolute log prismatic site of $(Y, M_Y)$ by $(Y , M_Y)_{\Prism}$ as per \cite{KosI}. The object $(\fS_L, E(u), M_{\mathrm{Spf} (\fS_L)})$ in $(Y , M_Y)_{\Prism}$ is called the Breuil-Kisin log prism; where, $\fS_L= \cO_{L_0}[[u]]$, equipped with Frobenius $\varphi(u)=u^p$, $\varphi(X_i)=X_i^p$, and the log structure $M_{\mathrm{Spf} (\fS_L)}$, given by $\N \rightarrow \fS_L$: $n \mapsto u^n$. In practice, we write the Breuil-Kisin log prism as $(\fS_L, E(u), \N)^a$. This object covers the final object of the topos associated to $(Y , M_Y)_{\Prism}$ (see Lem. 2.8 in \cite{DLMSII}). \smallskip
 
 \noindent Let $(\fS_L^{(i)}, E(u), \N^{i+1})^a$ denote the $(i+1)$-th self-coproduct of the Breuil-Kisin log prism in $(Y , M_Y)_{\Prism}^{\text{opp}}$. For instances, $\fS_L^{(1)}$ has the following description : 
 
 $$ \fS_L^{(1)}:= \fS_L[[1-\frac{u_2}{u}, 1-\frac{X_{1,2}}{X_{1}}, 1-\frac{X_{2,2}}{X_{2}}, \cdots, 1-\frac{X_{b,2}}{X_{b}}]]\{ \dfrac{1-\frac{u_2}{u}}{E(u)}, \dfrac{1-\frac{X_{1,2}}{X_{1}}}{E(u)}, \dfrac{1-\frac{X_{2,2}}{X_{2}}}{E(u)} ,\cdots, \dfrac{1-\frac{X_{b,2}}{X_{b}}}{E(u)}  \}^\wedge_{\delta}.$$ Here, $\{.\}^\wedge_{\delta}$ denotes the $(p, E)$-completion in the category of $\delta$-algebras. It comes with two projection maps $p_1, p_2 : \fS_L \rightarrow \fS_L^{(1)}$, which are faithfully flat by  Lem. 2.17 of \cite{DLMSII}. We have, $p_1(X_i)=X_{i}$, $p_2(X_i)=X_{i,2}$ and $p_2(u)=u_2$. In particular, the prism $(\fS_L^{(1)}, E(u))$ is transversal i.e. $\fS_L^{(1)}/ E(u)$ is $p$-torsion free. It is naturally equipped with log structure $\N^2 \rightarrow \fS_L^{(1)}: (m,0) \mapsto u^m; (0,n) \mapsto u_2^n$. We write it as $(\fS_L^{(1)}, E(u), \N^2)^a$. Similarly, for each $L_n=L(\pi_n)$, we have $(\fS_{L_n}, E(u^{p^n}), \N)^a$ and its self-product $(\fS_{L_n}^{(1)}, E(u^{p^n}), \N)^a$. Beware that $\fS_{L_n}=\fS_{L}=\cO_{L_0}[[u]]$ (as $L_n$ is totally ramified over $L$) but their co-products $\fS_L^{(1)}$ and $\fS_{L_n}^{(1)}$ are different in the absolute prismatic (opp-)site $(Y, M_Y)_{\Prism}^{\text{opp}}$ . Indeed, the corresponding Cartier divisors $E(u)$ and $E(u^{p^n})$ are not equal. \smallskip 

\begin{lem}\label{Frobenius}
The map $\iota_n : \fS_L \rightarrow \fS_{L_n}$ defined by $\sum a_i u^i \mapsto \sum a_i u^{ip^n}$ induces a map of log prisms $\iota_n : (\fS_L, E, \N)^a \rightarrow (\fS_{L_n}, E_n, \N)^a$; where $E:=E(u)$, $E_n:=E(u^{p^n})$ and the map of constant monoids $\N \rightarrow \N$ is given by $1 \mapsto p^n$.
\end{lem}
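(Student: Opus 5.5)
We need to check four things about $\iota_n$: it is a well-defined ring map, it is a $\delta$-map, it carries the Cartier divisor correctly, and it respects the log structures. Afterwards we also check that the target is an object of $(Y, M_Y)_{\Prism}$ over $\cO_L$ compatibly.

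\emph{Ring and $\delta$-structure.} First, $\iota_n$ is the $\cO_{L_0}$-linear continuous ring endomorphism of $\cO_{L_0}[[u]]$ sending $u \mapsto u^{p^n}$ and acting as the identity on $\cO_{L_0}$. It is well defined because $u^{p^n}$ is topologically nilpotent for the $(p,u)$-adic topology. Next we show that $\iota_n$ commutes with $\varphi$. Both $\varphi\circ\iota_n$ and $\iota_n\circ\varphi$ restrict to $\varphi$ on $\cO_{L_0}$, and both send $u$ to $u^{p^{n+1}}$. Since $\fS_L$ and $\fS_{L_n}$ are $p$-torsion free, the $\delta$-structure is determined by $\varphi$, so $\iota_n$ is a $\delta$-ring map.

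\emph{Cartier divisor.} We need $\iota_n(E(u)\fS_L) \subset E_n\fS_{L_n}$, and in fact equality after extending scalars. This holds because $\iota_n(E(u)) = E(u^{p^n}) = E_n$ exactly, as $E$ has coefficients in $W(k)\subset\cO_{L_0}$, which $\iota_n$ fixes. So $\iota_n$ gives a map of prisms $(\fS_L,(E))\to(\fS_{L_n},(E_n))$; any map of prisms satisfies $I_B = I_A B$, and here this is immediate. It remains to check compatibility with the structure maps to $\cO_{L_n}$. The structure map $\fS_L/E \cong \cO_L$ sends $u\mapsto\pi$, and $\fS_{L_n}/E_n\cong\cO_{L_n}$ sends $u\mapsto\pi_n$. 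The composite therefore sends $u \mapsto \pi_n^{p^n}=\pi$, which is the inclusion $\cO_L\hookrightarrow\cO_{L_n}$, as required.

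\emph{Log structures.} The prelog structure $\N\to\fS_L$, $1\mapsto u$, and the prelog structure $\N\to\fS_{L_n}$, $1\mapsto u$, are intertwined by the monoid map $1\mapsto p^n$, because $\iota_n(u)=u^{p^n}$. This gives a map of prelog rings and hence a map of the associated log structures. We also check compatibility with $\delta_{\log}$. In Koshikawa's definition the log prism carries $\delta_{\log}:\N\to\fS$ with $\varphi(u)=u^p\exp(p\,\delta_{\log})$, where $\delta_{\log}=0$ on both sides. Since $\varphi(u^{p^n})=(u^{p^n})^p$, the condition $\delta_{\log}(p^n)=0$ is compatible. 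Finally, on the reductions the log structures match the pre-log structures on $\cO_L$ ($1\mapsto\pi$) and on $\cO_{L_n}$ ($1\mapsto \pi_n$) through $1\mapsto p^n$, because $\pi=\pi_n^{p^n}$.

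The only point needing care is the log compatibility with the base: the morphism lives over $(\mathrm{Spf}\cO_{L_n}, M)\to(Y,M_Y)$ rather than over $Y$ itself, and the integral structure (the exactness/fine properties) must hold. Both are immediate for the monoid map $\N\xrightarrow{p^n}\N$, so we expect no real obstacle.
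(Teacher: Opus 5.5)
Your argument is correct and follows the paper's proof. The paper calls the prism part "straightforward", and your checks that $\iota_n$ commutes with $\varphi$, that $\iota_n(E)=E_n$ exactly, and that the reductions send $u\mapsto\pi_n^{p^n}=\pi$ fill in that step. The log part comes from the same commutative square, in which $1\mapsto p^n$ intertwines $1\mapsto u$ on both sides.

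Two of your side remarks are inaccurate, though neither affects the lemma:
\begin{enumerate}
\item Koshikawa's relation is $\varphi(\alpha(m))=\alpha(m)^p\bigl(1+p\,\delta_{\log}(m)\bigr)$, not an exponential. This is harmless here because $\delta_{\log}=0$.
\item With Koshikawa's convention that structure maps be \emph{strict}, $(\fS_{L_n},E_n,\N)^a$ is not an object of $(Y,M_Y)_{\Prism}$. The pullback of $M_Y$ to $\cO_{L_n}$ is generated by $\pi=\pi_n^{p^n}$ rather than $\pi_n$. Exactness of $\N\xrightarrow{p^n}\N$ is not the relevant condition. So $\iota_n$ is a map of log prisms lying over the non-strict map $(\mathrm{Spf}\,\cO_{L_n},M)\to(Y,M_Y)$, as your last paragraph says, and that is all the statement asserts.
\end{enumerate}
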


\begin{proof}
It is starightforward to check that $\iota_n$ is a map of prisms. Note that the following diagram commutes:
\begin{equation}
\begin{tikzcd}
\mathbb{N} \arrow[r, "1 \mapsto p^n"] \arrow[d, "1 \mapsto u"'] & \mathbb{N} \arrow[d, "p^n \mapsto u^{p^n}"] \\
\fS_L \arrow[r, " u \mapsto u^{p^n}"]                        & \fS_{L_n}
\end{tikzcd}
\end{equation}
This yields $\iota_n$ to be a map of log prisms.
\end{proof}

\noindent Now, the map $\iota_n$ induces the following pushout diagram in the absolute log prismatic (opp-)site, which gives rise to the morphism $\iota_n^{(1)} : (\fS_L^{(1)}, E, \N^2)^a \rightarrow (\fS_{L_n}^{(1)}, E_n, \N^2)^a$.  
\begin{equation}\label{pushout-sec-2-2}
\begin{tikzcd}
(\fS_{L_n}, E_n, \N)^a \arrow[rr, "p_1"]                 &                                 & (\fS_{L_n}^{(1)}, E_n, \N^2)^a                  \\
(\fS_L, E, \N)^a \arrow[u, "\iota_n"] \arrow[r, "p_1"']  & (\fS_L^{(1)}, E, \N^2)^a \arrow[ru, "\iota_n^{(1)}"']              &                    \\
                                  & (\fS_L, E, \N)^a \arrow[u, "p_2"] \arrow[r, "\iota_n"']& (\fS_{L_n}, E_n, \N)^a \arrow[uu, "p_2"']
\end{tikzcd}
\end{equation}

 \vspace{15pt} 
 
\subsection{Breuil-Kisin log prism over $\cO_{L_g}$:}\label{sec-2-3}
\noindent Recall the CDVR $\cO_{L_g}$ with perfect residue field from the proof of Prop. \ref{mainthm-prelim}. We need to consider the Breuil-Kisin prism over $\cO_{L_g}$. Recall that $\cO_{L_{0,g}}=W(k_g)$; where $k_g = k(X_1^{ 1/p^{\infty}}, X_2^{ 1/p^{\infty}}, \cdots, X_b^{ 1/p^{\infty}})$ and $\cO_{L_g} := \cO_{L_{0,g}} \otimes_{W(k)} \cO_K$. Moreover, we have the map $\iota_g : \cO_{L_0} \rightarrow \cO_{L_{0,g}} : X_i \mapsto [X_i^\flat]$. Let $\fS_{L_g} := \cO_{L_{0,g}}[[u]]$. So, we have  the natural map $\iota_g : \fS_L \rightarrow \fS_{L_g}$. Similar to the previous subsection, we have the Breuil-Kisin log prism $(\fS_{L_g}, E, \N)^a$ with the log structure given by $\N \rightarrow \fS_{L_g} : n \mapsto u^n$. It is easy to see that the map $\iota_g$ induces a morphism $\iota_g : (\fS_L, E, \N)^a \rightarrow (\fS_{L_g}, E,  \N)^a$ in the site $(Y, M_Y)_{\Prism}^{\text{opp}}$.\smallskip

 \noindent Let $(\fS^{(1)}_{L_g}, E, \N^2)^a$ be the self-coproduct of $(\fS_{L_g}, E, \N)^a$ in $(Y, M_Y)_{\Prism}^{\text{opp}}$. This is given by (c.f. Subsec. 2.3 and proof of Thm. 5.0.12 in \cite{DL}) $$ \fS_{L_g}^{(1)} = \fS_{L_g}[[1-\frac{u_2}{u}]]\{ \dfrac{1-\frac{u_2}{u}}{E(u)}\}^\wedge_{\delta}.$$ Similar to the previous subsection, we have the pushout diagram along the map $\iota_g$ in $(Y, M_Y)_{\Prism}^{\text{opp}}$:

\begin{equation}\label{pushout-sec-2-3}
\begin{tikzcd}
(\fS_{L_g}, E, \N)^a \arrow[rr, "p_1"]                 &                                 & (\fS^{(1)}_{L_g}, E, \N^2)^a                  \\
(\fS_L, E, \N)^a \arrow[u, "\iota_g"] \arrow[r, "p_1"']  & (\fS_L^{(1)}, E, \N^2)^a \arrow[ru, "\iota_g^{(1)}"']              &                    \\
                                  & (\fS_L, E, \N)^a \arrow[u, "p_2"] \arrow[r, "\iota_g"']& (\fS_{L_g}, E, \N)^a \arrow[uu, "p_2"']
\end{tikzcd}
\end{equation}
\noindent Recall that $L_n=L(\pi_n)$ and $L_{n,g}=(L_n)_g=L_g(\pi_n)$. Finally, similar pushout diagrams for the obvious maps $\iota_n: (\fS_{L_g}, E, \N)^a \rightarrow (\fS_{L_{n,g}}, E_n, \N)^a$ and $\iota_g : (\fS_{L_n}, E_n, \N)^a \rightarrow (\fS_{L_{n,g}}, E_n, \N)^a$ give the morphisms $$\iota_{g,n}^{(1)} : (\fS_{L_g}^{(1)}, E, \N^2)^a \rightarrow (\fS_{L_{n,g}}^{(1)}, E_n, \N^2)^a\ \text{and}\ \iota^{(1)}_{n,g} : (\fS_{L_n}^{(1)}, E_n, \N^2)^a \rightarrow (\fS_{L_{n,g}}^{(1)}, E_n, \N^2)^a,$$
where $(\fS_{L_{n,g}}^{(1)}, E_n, \N^2)^a$ is the self-coproduct of $(\fS_{L_{n,g}}, E_n, \N)^a$ with $\fS_{L_{n,g}} = \fS_{L_g}$. The relation among these maps are given by the following commutative squares:

\begin{equation}\label{square-sec-2-3}
\begin{tikzcd}
(\fS_L, E, \N)^a \arrow[r, "\iota_n"] \arrow[d, " \iota_g"] & (\fS_{L_n}, E_n, \N)^a \arrow[d, "\iota_g "] \\
(\fS_{L_g}, E, \N)^a \arrow[r, "\iota_n"]           & (\fS_{L_{n,g}}, E_n, \N)^a          
\end{tikzcd} 
\end{equation}

\noindent Moreover, this commutativity and universal property of the above pushout diagrams yield the following commutative square:

\begin{equation}\label{square1-sec-2-3}
\begin{tikzcd}
(\fS_L^{(1)}, E, \N^2)^a \arrow[r, "\iota^{(1)}_n"] \arrow[d, "\iota_g^{(1)}"] & (\fS_{L_n}^{(1)}, E_n, \N^2)^a \arrow[d, " \iota^{(1)}_{n,g}"] \\
(\fS^{(1)}_{L_g}, E, \N^2)^a \arrow[r, " \iota^{(1)}_{g,n}"]           & (\fS_{L_{n,g}}^{(1)}, E_n, \N^2)^a          
\end{tikzcd}
\end{equation}

\vspace{15pt}

\subsection{Categorical Equivalence:}\label{cat-eq}

\noindent In this subsection, we describe various linear algebra categories associated to the category of $\Z_p$-lattices in $p$-adic Galois representations. We would treat the two cases : perfect residue field case and imperfect residue field case, separately. Let us begin with the category of $\Z_p$-lattices in $p$-adic representation of $\Gal_{L_g}$. For simplicity, we write $K:=L_g$ in this subsection only.\smallskip 

Our fixed choice of $p$-power compatible system $\overrightarrow{\pi} =\{ \pi_n \}$ gives rise to an arithmetic Kummer tower $K_{\infty} = \cup_n K(\pi_n)$. Note that $K_{\infty}$ is not Galois over $K$. Let $F$ be the Galois closure of $K_{\infty}= \cup_n K_n$ i.e. $F=K_{\infty}K_{p^{\infty}}$; where, $K_{p^{\infty}}$ is the cyclotomic tower. The Galois group $\widehat{G}=\Gal(F/K)$ is generated by $\tau$ and $H_K$, where, $\tau (\pi_n)=\pi_n \zeta_n$ for $n \geq 1$ and $H_K = \Gal(F/K_{\infty}) \cong \Gal(K_{p^{\infty}}/K)$. We recall Caruso's theory of étale $(\varphi, \tau)$-modules and its categorical equivalence with the $\Z_p$-representations of $\Gal_K$ following \cite{DL} (c.f. \cite{Car}). We write $\fS_{L_g}=\fS$ and $\cO_{\cE,L_g}=\cO_{\cE}$ in this subsection only. \smallskip

\begin{defn}\label{defn-phitaumodule}
\noindent Consider the category of triplet $(\cM, \varphi_{\cM}, \widehat{G})$, where

(1) $\cM$ is a finite free $\cO_{\cE}$-module equipped with a $\varphi_{\cO_{\cE}}$-semilinear endomorphism $\varphi_{\cM}$ such that the linearization $1 \otimes \varphi_{\cM} : \varphi^{\star} \cM \rightarrow \cM$ is an isomorphism; i.e. $(\cM, \varphi_{\cM})$ is an étale $\varphi$-module over $\cO_{\cE}$, 

(2) $\widehat{G}$ is $\varphi_{\widehat{M}}$-commuting $W(\widehat{F}^\flat)$-semilinear action of $\widehat{G}$ on $\widehat{\cM} := W(\widehat{F}^\flat) \otimes_{\cO_{\cE}} \cM$,

(3) Regarding $\cM$ as a $\cO_{\cE}$-submodule of $\widehat{\cM}$, one has $\cM \subset \widehat{\cM}^{\Gal_{K_{\infty}}}$.

\end{defn}

\noindent The category of such triplets $(\cM, \varphi_{\cM}, \widehat{G})$ with obvious notion of morphisms is denoted by $\text{Mod}^{\varphi, \widehat{G}}_{\cO_{\cE}, W(\widehat{F}^\flat)}$. We state the following theorem as in Thm. 4.2.3 of \cite{DL} (c.f. \cite{Car}). 

\begin{thm}
There is an anti-equivalence between $\text{Mod}^{\varphi, \widehat{G}}_{\cO_{\cE}, W(\widehat{F}^\flat)}$ and the category $\text{Rep}_{\Z_p}(\Gal_K)$ of $\Z_p$-representations of $\Gal_K$ such that if a $T$ corresponds to the étale $(\varphi, \tau)$-module $(\cM, \varphi_{\cM}, \widehat{G})$ then
$$ T^\vee = (W(\C_p^\flat) \otimes_{W(\widehat{F}^\flat)} \widehat{\cM})^{\varphi =1}.$$
\end{thm}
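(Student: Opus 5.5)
The plan is to bootstrap from the equivalence for the Kummer tower, Thm. \ref{Kim}, applied with $L=K$ (recall $K:=L_g$ here, which has perfect residue field, so this is just Fontaine's classical theory of étale $\varphi$-modules for $\Gal_{K_\infty}$). The extra datum of the $\widehat{G}$-action on $\widehat{\cM}$ should then encode exactly the extension of the $\Gal_{K_\infty}$-action to $\Gal_K$.

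\textbf{Construction of the functor.} Given $T\in \text{Rep}_{\Z_p}(\Gal_K)$, I would:
\begin{enumerate}
\item Let $\cM$ be the étale $\varphi$-module attached by Thm. \ref{Kim} to $T^\vee|_{\Gal_{K_\infty}}$, namely $\cM=(\widehat{\cO_{\cE}^{ur}}\otimes_{\Z_p}T^\vee)^{\Gal_{K_\infty}}$. This comes with a $\varphi$-compatible isomorphism
$$W(\C_p^\flat)\otimes_{\cO_{\cE}}\cM\cong W(\C_p^\flat)\otimes_{\Z_p}T^\vee. \qquad (\ast)$$
\item Take $\Gal_F$-invariants in $(\ast)$. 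Since $\widehat F$ is perfectoid, Ax--Sen--Tate tilted to characteristic $p$ and passed to Witt vectors gives $W(\C_p^\flat)^{\Gal_F}=W(\widehat{F}^\flat)$. Hence $\widehat{\cM}=W(\widehat F^\flat)\otimes_{\cO_{\cE}}\cM$ identifies with $(W(\C_p^\flat)\otimes T^\vee)^{\Gal_F}$.
\item This identification transports the residual action of $\widehat G=\Gal_K/\Gal_F$ onto $\widehat{\cM}$. By construction the action is $W(\widehat F^\flat)$-semilinear and commutes with $\varphi$.
\item Condition (3) holds because $\cM$ sits inside the $\Gal_{K_\infty}$-invariants of $(\ast)$.
\end{enumerate}

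\textbf{Quasi-inverse.} Conversely, given $(\cM,\varphi_\cM,\widehat G)$, I would let $\Gal_K$ act diagonally on $W(\C_p^\flat)\otimes_{W(\widehat F^\flat)}\widehat{\cM}$: on the first factor via the natural action, and on the second via $\Gal_K\to\widehat G$. Then set $T^\vee$ to be the $\varphi=1$ part.
\begin{itemize}
\item Restricted to $\Gal_{K_\infty}$, condition (3) forces this to agree with $(\widehat{\cO_{\cE}^{ur}}\otimes\cM)^{\varphi=1}$. So by Thm. \ref{Kim} the module $T^\vee$ is finite free of the right rank, and the two constructions are inverse on the $\Gal_{K_\infty}$-level.
\item The $\Gal_K$-actions match because $\Gal_K$ is generated by $\Gal_{K_\infty}$ and $\Gal_F$, and $\Gal_F$ acts trivially on $\widehat{\cM}$.
\end{itemize}

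\textbf{Full faithfulness.} A morphism of triples is a morphism of étale $\varphi$-modules that is compatible with $\widehat G$ after base change. By Thm. \ref{Kim} such maps correspond to $\Gal_{K_\infty}$-equivariant maps of lattices. Compatibility with $\widehat G$, combined with the generation statement above, upgrades these to $\Gal_K$-equivariant maps. Injectivity on Hom-sets uses the faithful flatness of $\cO_{\cE}\to W(\widehat F^\flat)$.

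\textbf{Main obstacle.} The key point is the descent in step 2. We need that $(W(\C_p^\flat)\otimes T^\vee)^{\Gal_F}$ is finite free over $W(\widehat F^\flat)$ and spans back, i.e. that
$$H^1_{\mathrm{cont}}\bigl(\Gal_F,\mathrm{GL}_d(W(\C_p^\flat))\bigr)$$
is trivial. I would prove this by dévissage in three stages:
\begin{enumerate}
\item Modulo $p$ this is Hilbert 90 for the Galois extension $\C_p^\flat/\widehat F^\flat$ (completed, using that the tilt is the completed algebraic closure).
\item Modulo $p^n$, induct on $n$ using $H^1(\Gal_F, M_d(\C_p^\flat))=0$.
\item Pass to the $p$-adic limit, using $p$-adic completeness and the Mittag-Leffler property of the invariants.
\end{enumerate}
Once this is in place, the rest is formal bookkeeping with Thm. \ref{Kim}.
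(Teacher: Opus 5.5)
The paper does not prove this statement. It quotes it as Thm.~4.2.3 of \cite{DL}, going back to Caruso. You instead derive it from Thm.~\ref{Kim} for $\Gal_{K_\infty}$ together with the tilted Ax--Sen--Tate identity $W(\C_p^\flat)^{\Gal_F}=W(\widehat F^\flat)$. That is the standard argument behind the citation, and its overall construction is sound: the functor comes from $\Gal_F$-invariants of $(\ast)$, and the quasi-inverse from $\varphi$-invariants with the diagonal action. The citation buys brevity; your route gives an argument internal to the paper's notation. Two points need correcting, though.

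\textbf{The generation claim is false.} You assert that $\Gal_K$ is generated by $\Gal_{K_\infty}$ and $\Gal_F$. Since $K_\infty\subset F$, one has $\Gal_F\subseteq\Gal_{K_\infty}$, so these two subgroups generate only $\Gal_{K_\infty}$. The correct statement is $\Gal_K=\Gal_{K_\infty}\cdot\overline{\langle\tilde\tau\rangle}$ for a lift $\tilde\tau$ of $\tau$. As written, your matching of $\Gal_K$-actions and your upgrade of morphisms rest on the false claim.

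\textbf{The fix.} No generation statement is needed, because here the entire $\widehat G$-action is part of the data. In both round trips the comparison map $W(\C_p^\flat)\otimes_{\Z_p}T^\vee\to W(\C_p^\flat)\otimes_{W(\widehat F^\flat)}\widehat{\cM}$ is an isomorphism by Thm.~\ref{Kim}. It is also $\Gal_K$-equivariant for the diagonal actions by construction. When you start from $T$, this holds because $\widehat{\cM}$ is a $\Gal_K$-stable submodule carrying the restricted action. Taking $\varphi=1$ parts, respectively $\Gal_F$-invariants (where $\Gal_F$ acts only through $W(\C_p^\flat)$), gives both natural isomorphisms. Full faithfulness then follows, and a morphism of triples induces a $\Gal_K$-equivariant map for the same reason. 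Generation by $\tau$ and $H_K$ only matters in Caruso's formulation, where just $\tau$ is given.

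\textbf{The ``main obstacle'' is not one.} $(\ast)$ already exhibits a basis of $W(\C_p^\flat)\otimes_{\Z_p}T^\vee$ fixed by $\Gal_F\subseteq\Gal_{K_\infty}$: a basis of the free $\cO_{\cE}$-module $\cM$. So the $\Gal_F$-invariants are computed coordinatewise, exactly as in your step 2. No vanishing of $H^1_{\mathrm{cont}}(\Gal_F,\mathrm{GL}_d(W(\C_p^\flat)))$ is required.

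\textbf{Continuity.} The definition imposes no continuity on the $\widehat G$-action, so you should say why the $\Gal_K$-action you build on $T^\vee$ is continuous. It is continuous on $\Gal_{K_\infty}$. Any abstract homomorphism from the procyclic group $\overline{\langle\tilde\tau\rangle}$ to $\mathrm{GL}_d(\Z_p)$ is automatically continuous. Since $\Gal_K=\Gal_{K_\infty}\cdot\overline{\langle\tilde\tau\rangle}$, the whole action is continuous.
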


\noindent Indeed, under the above equivalence, for any $T$ in $\text{Rep}_{\Z_p}(\Gal_K)$, the étale $\varphi$-module attached to the restriction $T|_{\Gal_{K_\infty}}$ is $(\cM, \varphi_{\cM})$. Now, if a $\Z_p$-representation $T$ of $\Gal_K$ is moreover of finite $E(u)$-height (with respect to our fixed choice of $\overrightarrow{\pi}$) then there exists a Breuil-Kisin module $(\fM, \varphi_{\fM})$ over $\fS$ of $E$-height $\leq r$ such that the Hodge-Tate weight of $T$ is in $[0, r]$ and the étale $\varphi$-module $\cM$ can be written as $\cM=\fM \otimes_{\fS} \cO_{\cE}$. A finite height $(\varphi, \tau)$-modules $(\cM, \varphi_{\cM}, \widehat{G})$ are the ones whose underlying étale $\varphi$-module $\cM$ arises from a Breuil-Kisin module $\fM$ i.e. $\cM = \cO_{\cE} \otimes_{\fS} \fM$.

\vspace{10pt}

\noindent Now, we describe another category which is equivalent to finite height $\Z_p$-representations in $\text{Rep}_{\Z_p}(\Gal_K)$; namely, the category of finite height Kisin descent datum over $\cO_{\cE}$. Let us denote the full subcategory of finite height $\Z_p$-lattices in $\text{Rep}_{\Z_p}(G_K)$ by $\text{Rep}^{\text{fin}}_{\Z_p}(G_K)$. Following \cite{DLMS} and \cite{DLMSII} we define the category of \textit{Laurent} descent datum and \textit{finite height} Kisin descent datum over ${\cO_{\cE}}$, as follows:

\vspace{10pt}
\begin{defn}
\noindent Let $\textbf{DD}_{\cO_{\cE}}$ be the category consisting of the objects as triplets $(\cM,, \varphi_{\cM}, f)$ such that

\vspace{5pt}

\noindent (1) $(\cM, \varphi_{\cM})$ is an étale $\varphi$-module over $\cO_{\cE}$.

\noindent(2) $f$ is an isomorphism $$f : \fS^{(1)}[E^{-1}]^\wedge_p \otimes_{p_1, \cO_{\cE}} \cM \longrightarrow \fS^{(1)}[E^{-1}]^\wedge_p \otimes_{p_2, \cO_{\cE}} \cM$$
compatible with Frobenii and satisfies cocycle condition over $\fS^{(2)}[E^{-1}]^\wedge_p$. Here, $\fS^{(i)}$ (a.k.a. $\fS_{L_g}^{(i)}$ in Subsec. \ref{sec-2-3}) is underlying $\delta$-ring of $(i+1)$-th self-coproduct of the prism $(\fS, E, \N)^a$ for $i=1,2$ and $p_1, p_2 : \cO_{\cE}=\fS[E^{-1}]^\wedge_p \rightarrow \fS^{(1)}[E^{-1}]^\wedge_p$ are induced from the usual projection map $p_1, p_2 : \fS \rightarrow \fS^{(1)}[E^{-1}]^\wedge_p$. The morphisms are defined in obvious way.\smallskip

\noindent A triplet $(\cM,, \varphi_{\cM}, f)$ is called finite height Kisin descent data if there exists a Breuil-Kisin module $(\fM, \varphi_{\fM})$ of finite $E$-height such that $\cM = \cO_{\cE} \otimes_{\fS} \fM$. Consequently, $$ f : \fS^{(1)}[E^{-1}]^\wedge_p \otimes_{p_1, \fS} \fM \longrightarrow \fS^{(1)}[E^{-1}]^\wedge_p \otimes_{p_2, \fS} \fM$$
compatible with Frobenii and satisfies cocycle conditions over $\fS^{(2)}[E^{-1}]^\wedge_p$. Let us denote the full subcategory of finite height Kisin descent data inside $\textbf{DD}_{\cO_{\cE}}$ by $\textbf{DD}^{\text{fin}}_{\cO_{\cE}}$. In particular, an object of $\textbf{DD}^{\text{fin}}_{\cO_{\cE}}$ is written as a triplet $(\fM, \varphi_{\fM}, f)$.
\end{defn}

\noindent Our category $\textbf{DD}_{\cO_{\cE}}$ is precisely the category of étale $\varphi$-module over $\fS[E^{-1}]^\wedge_p$ with descent data over $\fS^{(1)}[E^{-1}]^\wedge_p$ as in Def. 4.2.1 of \cite{DL}. Following Sec. 5 of \cite{DL}, we have, 

\begin{thm}(Thm. 5.0.14, \cite{DL})
The category $\text{Rep}_{\Z_p}(G_K)$ is equivalent to the category $\textbf{DD}_{\cO_{\cE}}$. Similarly, the subcategory $\text{Rep}^{\text{fin}}_{\Z_p}(G_K)$ is equivalent to the category $\textbf{DD}^{\text{fin}}_{\cO_{\cE}}$. 
\end{thm}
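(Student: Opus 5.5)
The plan is to pass through Laurent $F$-crystals on the absolute log prismatic site $(Y_g, M_{Y_g})_{\Prism}$ of $Y_g=\mathrm{Spf}(\cO_K)$ (recall $K=L_g$ in this subsection), rather than through $(\varphi,\tau)$-modules directly. First, I would invoke the log version of the Bhatt--Scholze étale comparison: $\text{Vect}((Y_g, M_{Y_g})_{\Prism}, \cO_{\Prism}[\cI_{\Prism}^{-1}]^\wedge_p)^{\varphi=1}$ is equivalent to $\text{Rep}_{\Z_p}(\Gal_K)$. This is the CDVR specialization of the horizontal equivalence in (\ref{triangle-diag}). The functor is evaluation at the log prism $(A_{\text{inf}}, E, \N)^a$ followed by taking $\varphi$-invariants, with $\Gal_K$ acting through its action on $A_{\text{inf}}$. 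In the log setting the point to check is that Laurent crystals do not see the log structure. Inverting $E$ and $p$-completing kills the difference between log and non-log prisms, so the equivalence reduces to the non-log one.

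Second, I would show that evaluation at the Breuil--Kisin log prism and its self-coproducts gives an equivalence between Laurent $F$-crystals and $\textbf{DD}_{\cO_{\cE}}$. This is the analogue of Prop. \ref{DD-OE} with $\fS=\fS_{L_g}$ and $\fS^{(i)}=\fS^{(i)}_{L_g}$. There are two inputs.
\begin{enumerate}
\item $(\fS, E, \N)^a$ covers the final object of the topos; the analogue of Lem. 2.8 of \cite{DLMSII} holds verbatim over $\cO_{L_g}$.
\item The projections $p_1,p_2:\fS\to\fS^{(1)}$ are $(p,E)$-completely faithfully flat, and this survives inverting $E$ and $p$-completing.
\end{enumerate}
Given these, vector bundles over $\cO_{\Prism}[\cI^{-1}]^\wedge_p$ satisfy descent along the cover. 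A crystal is then the same as an étale $\varphi$-module over $\cO_{\cE}$ with a Frobenius-compatible gluing isomorphism $f$ over $\fS^{(1)}[E^{-1}]^\wedge_p$ that satisfies the cocycle condition over $\fS^{(2)}[E^{-1}]^\wedge_p$. The one extra check is that finite projective modules over $\cO_{\cE}$ are free, since $\cO_{\cE}$ is a complete DVR here. This makes objects of $\textbf{DD}_{\cO_{\cE}}$ match crystals in vector bundles.

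Composing the two equivalences gives $\text{Rep}_{\Z_p}(\Gal_K)\simeq\textbf{DD}_{\cO_{\cE}}$. The underlying étale $\varphi$-module of the descent datum attached to $T$ is $\cM(T|_{\Gal_{K_\infty}})$ from Thm. \ref{Kim}, after dualizing as in the $(\varphi,\tau)$-module theorem. The reason is that restricting the crystal along $\fS\to A_{\text{inf}}$ computes $T^\vee$ from $\cM$. The finite height statement is then formal. $T$ is of finite $E$-height exactly when this $\cM$ is of the form $\cO_{\cE}\otimes_{\fS}\fM$, and $\textbf{DD}^{\text{fin}}_{\cO_{\cE}}$ is by definition the full subcategory of descent data whose underlying $\cM$ has this form. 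So the equivalence restricts to $\text{Rep}^{\text{fin}}_{\Z_p}(G_K)\simeq\textbf{DD}^{\text{fin}}_{\cO_{\cE}}$, and full faithfulness is inherited.

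The main obstacle is the second step, specifically the flatness of $p_i$ after the completed localization. $\fS^{(1)}$ is a prismatic envelope in $\delta$-rings, so flatness needs the explicit description of $\fS^{(1)}$ and the transversality of $(\fS^{(1)},E)$. I would first reduce to the corresponding statements of \cite{DL}, Sec. 5, which treat $\cO_K$ with perfect residue field, since $L_g$ has perfect residue field $k_g$. If that reduction is not clean, the fallback is to compare $\textbf{DD}_{\cO_{\cE}}$ directly with Caruso's $(\varphi,\tau)$-modules. This would use the map $\fS^{(1)}\to W(\widehat{F}^\flat)$ induced by $u_2\mapsto\tau(u)$, so that $f$ corresponds to the $\tau$-action.
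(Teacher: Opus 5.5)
Your argument is correct in outline, but it follows a different route from the one the paper relies on.

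\textbf{The paper's route.} The paper gives no argument of its own beyond citing \cite{DL}. The justification it spells out afterwards (leading to Thm.~\ref{dic}) goes through étale $(\varphi,\tau)$-modules. It identifies $(\fS^{(1)})_{\mathrm{perf}}[E^{-1}]^\wedge_p\cong W((K_\infty^{(1)})^\flat)\cong \text{Cont}(\widehat{G},W(\widehat{F}^\flat))^{H_K^2}$ via \cite{Wu}. It then uses the evaluation maps $e_\gamma$ to convert the descent isomorphism $f$ into a semilinear $\widehat{G}$-action on $\widehat{\cM}$. Finally it invokes Caruso's equivalence \cite{Car} with $\text{Rep}_{\Z_p}(G_K)$.

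\textbf{Your route.} You pass through Laurent $F$-crystals, i.e.\ you prove the CDVR case of (\ref{triangle-diag}). Your first step is its diagonal arrow to $\Z_p$-local systems, not the horizontal one as you wrote. Your second step is the horizontal arrow, the analogue of Prop.~\ref{DD-OE}.

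\textbf{What each buys.} Your route is functorial in the base. Pulling back crystals along maps of log prisms such as $\iota_g,\iota_n$ makes Lem.~\ref{func-imperf}, and even Prop.~\ref{res-perf}, essentially formal. It is also the version that carries over to the small affine setting of Sec.~\ref{sec-5}, where no $(\varphi,\tau)$-theory is available. The paper's route instead gives an explicit dictionary between $f$ and the action of $\tau$. The paper actually uses that dictionary in Thm.~\ref{dic}(3),(4) and in the proof of Prop.~\ref{res-perf}. Your stated fallback is exactly this route.

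\textbf{Two points to tighten.}

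First, the agreement of Laurent $F$-crystals on the log and non-log sites, equivalently the log étale comparison, is a genuine theorem. It is in \cite{DL}, Sec.~5, and in \cite{DLMSII}, Thm.~3.14. It does not follow from the heuristic that inverting $E$ and $p$-completing kills the log structure, so cite it rather than argue it that way.

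Second, the finite-height restriction is formal only once the variances match. Your realization $T^\vee=(W(\C_p^\flat)\otimes\cM)^{\varphi=1}$ is contravariant, which fits Def.~\ref{fin-defn-classical}. Def.~\ref{def-2}, however, is covariant. Finite $E$-height is not stable under $T\mapsto T^\vee$: by Thm.~\ref{Gao-main-perf}, exactly one of $\Z_p(1)$ and $\Z_p(-1)$ has it. So either adopt the contravariant definition or precompose your equivalence with duality. Only then does ``$T$ is of finite height exactly when the underlying $\cM$ of its descent datum comes from a Breuil--Kisin module'' hold.
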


\noindent Therefore, we have the following diagram with equivalence $2$-arrows :

\begin{equation}\label{diag-descentdata}
\begin{tikzcd}
             & \text{Rep}_{\Z_p}(\Gal_K) \arrow[ld, ] \arrow[rd] &   \\
\textbf{DD}_{\cO_{\cE}} \arrow[rr] &                                  & (\varphi, \tau)\text{-modules}
\end{tikzcd}
\end{equation}

\noindent It is evident that the above diagram restricts to the finite height objects as :

\begin{equation}\label{diag-fin-descentdata}
\begin{tikzcd}
             & \text{Rep}^{\text{fin}}_{\Z_p}(\Gal_K) \arrow[ld, ] \arrow[rd] &   \\
\textbf{DD}^{\text{fin}}_{\cO_{\cE}} \arrow[rr] &                                  & \text{finite height} (\varphi, \tau)\text{-modules}
\end{tikzcd}
\end{equation}
We would like to understand the explicit dictionary between étale $(\varphi, \tau)$-module and $\textbf{DD}_{\cO_{\cE}}$ (and consequently their finite height objects). This is mostly following Subsec. 4.2 in \cite{DL}. \medskip

\noindent A prism $(A, I)$ is called perfect prism if the equipped Frobenius map on the $\delta$-ring $A$ is an isomorphism. An example of such perfect prism is $(A_{\text{inf}}, \ker (\theta)):=(W(\cO_{\C_p^\flat}), ([p^\flat]-p))$ for some $p$-power compatible sequence $p^\flat=(p,p^{1/p}, p^{1/p^2},\cdots)$. In contrast, our Breuil-Kisin prism $(\fS, E)$ is not perfect prism. By \cite{BS22}, the category of perfect prism embeds fully faithfully inside the category of prisms and it admits a left adjoint. Consequently, any prism $(B, J)$ admits the unit map into its perfection $(B, J)_{\text{perf}}$.\smallskip 

\noindent By Lem. 4.2.6 in \cite{DL}, the perfection of the Breuil-Kisin prism $(\fS, E)$ is given by $(\fS_{\text{perf}}, \ker \theta)=(W(\mathcal{O}^\flat_{\widehat{K}_{\infty}}), \ker \theta)$. Here, $\theta$ is the Fontain's map $W(\mathcal{O}^\flat_{\widehat{K}_{\infty}}) \rightarrow \cO_{\widehat{K}_{\infty}}$ and $\ker \theta$ is a principal ideal generated by $\xi:=[p^\flat]-p$. Note that via the unit map $\fS \rightarrow \fS_{\text{perf}} :u \mapsto [\pi^\flat]$ the ideal $(E(u))$ maps to $(\xi)$. We need the following lemma. 

\begin{lem}\label{coprod-perf}
Let $\textbf{Perfd}_K$ be the category of perfectoid $K$-algebras. Then the category $\textbf{Perfd}_K$ admits nonempty coproducts.
\end{lem}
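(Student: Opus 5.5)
The plan is to construct the coproduct of a nonempty family $\{S_i\}_{i \in I}$ of perfectoid $K$-algebras as the completed tensor product. Here $K = L_g$, and a perfectoid $K$-algebra means a Banach $K$-algebra $S$ whose subring $S^\circ$ of power-bounded elements is bounded and on which Frobenius $S^\circ/p \rightarrow S^\circ/p$ is surjective. For a finite family I will take
\[
S := \Big(\textstyle\bigotimes_{i} S_i^\circ\Big)^\wedge_p[1/p],
\]
where the tensor product is over $\cO_K$ and the completion is $p$-adic. For an arbitrary nonempty family I will take the $p$-adic completion of the filtered colimit over finite subfamilies, and then invert $p$.

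First I will check that $A := (\bigotimes_i S_i^\circ)^\wedge_p$ is an integral perfectoid ring. It is $p$-adically complete. Frobenius on $A/p = \bigotimes_i S_i^\circ/p$ is surjective because it is surjective on each factor. Next I will fix a compatible system $\varpi = (\pi_n)$ with $\varpi^p \mid p$; this uses that $\widehat{K}_\infty$ sits inside any perfectoid over $K$, or one can work with $\varpi = p^{1/p}$ after adjoining it. With this, I will argue that Frobenius induces an isomorphism $A/\varpi \cong A/\varpi^p$ up to almost zero elements.

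The main obstacle is that $A$ need not be $p$-torsion free, and $\ker(\theta)$ need not be principal and regular, so $A$ is only integral perfectoid in the almost sense. To get past this, I will use the almost purity framework of Scholze (or the tilting equivalence). Under tilting, perfectoid $K$-algebras correspond to perfectoid $K^\flat$-algebras. In characteristic $p$, the coproduct is just the completed perfected tensor product $(\bigotimes_i S_i^{\flat\circ})^{\wedge}_{\varpi^\flat,\mathrm{perf}}[1/\varpi^\flat]$, and this is visibly perfect and complete, hence perfectoid. I will then untilt it by $W(-)\otimes_{W(K^{\flat\circ})}\cO_K$, equivalently $A_{\text{inf}}$-style untilting along $\ker\theta$. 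Since tilting is an equivalence of categories $\textbf{Perfd}_K \simeq \textbf{Perfd}_{K^\flat}$, the coproducts transport, and this gives the coproduct in $\textbf{Perfd}_K$.

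Finally, I will verify the universal property directly. A map from $S$ to a perfectoid $T$ is the same as compatible maps $S_i \rightarrow T$, because continuous $K$-algebra maps between perfectoid algebras send $S_i^\circ$ into $T^\circ$ and so factor through the completed tensor product. Nonemptiness is used so that the base $K$ itself is not required as an empty coproduct, although $K$ is also perfectoid here, since its completion along the Kummer tower is.
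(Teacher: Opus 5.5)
Your argument breaks down at the tilting step. Here $K=L_g$ is a discretely valued field, not a perfectoid field. Its tilt $\varprojlim_{x\mapsto x^p}\cO_K/p$ is just the residue field $k_g$, so there is no perfectoid field $K^\flat$ and no equivalence $\textbf{Perfd}_K\simeq\textbf{Perfd}_{K^\flat}$ along which coproducts could be transported; the $K$-algebra structure of a perfectoid $K$-algebra is not recorded by its tilt.

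Neither of your ways of producing a perfectoid base helps. It is false that $\widehat{K}_\infty$ sits inside every perfectoid $K$-algebra: for instance $\widehat{K(\mu_{p^\infty})}$ does not contain $\pi^{1/p}$, since $K_\infty\cap K(\mu_{p^\infty})=K$ for odd $p$. Adjoining $p^{1/p}$ changes the category in which you are computing.

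For the same reason your closing remark is wrong: $\widehat{K}_\infty$ is perfectoid but $K$ is not. This is why only \emph{nonempty} coproducts are asserted, since an empty coproduct would be an initial object. Your definition of a perfectoid $K$-algebra also omits the condition that $S^\circ$ contain a topologically nilpotent unit $\varpi$ with $\varpi^p\mid p$; without it, an unramified $K$ would itself count as perfectoid.

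Once tilting is removed, what is left is the naive candidate $\bigl(\widehat{\bigotimes}_{\cO_K}S_i^\circ\bigr)[1/p]$. This is in general not perfectoid, not even uniform. Take $S_1=S_2=\widehat{F}$ and put $F_n=K(\pi_n,\zeta_{p^n})$. Under $F_n\otimes_K F_n\cong\prod_{\Gal(F_n/K)}F_n$, the indicator function of the open subgroup $\Gal(F/F_n)$ of $\widehat{G}$ is an idempotent of $F\otimes_K F$, hence power-bounded. But the smallest power of $p$ moving it into $\cO_F\otimes_{\cO_K}\cO_F$ is governed by the different of $F_n/K$, which is unbounded in $n$. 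This is why the coproduct $\widehat{F}^{(1)}$ used later in the paper is $\mathrm{Cont}(\widehat{G},\widehat{F})$ rather than a completed tensor product.

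The missing ingredient is a universal perfectoidization that works over a non-perfectoid base. The ring $R:=\widehat{\bigotimes}_{\cO_K}S_i^\circ$ is semiperfectoid: it is a quotient of the perfectoid ring $W(C)/(\xi_1)$, where $C$ is the suitably completed tensor product of the tilts $S_i^{\circ\flat}$ and $\xi_1$ generates $\ker\theta$ for $S_1^\circ$. Hence Bhatt--Scholze's map $R\to R_{\mathrm{perfd}}$ is the universal map from $R$ to a perfectoid ring. Then $R_{\mathrm{perfd}}[1/p]$ is the coproduct by the argument of your last paragraph: continuous $K$-algebra maps $S_i\to T$ send $S_i^\circ$ into $T^\circ$, giving $R\to T^\circ$, which factors uniquely through $R_{\mathrm{perfd}}$. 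Infinite families then follow by $p$-completed filtered colimits, as you suggest.

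The paper itself gives no argument here and simply cites Lem.~4.2.7 of \cite{DL}.
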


\begin{proof}
This is Lem. 4.2.7 in \cite{DL}. 
\end{proof}

\noindent Denote $A_{\text{inf}}(\cO_{\widehat{F}}):=W(\cO_{\widehat{F}}^\flat)$. We write $(\fS^{(i)}, E)$ and $(A_{\text{inf}}(\cO_{\widehat{F}})^{(i)}, (\xi))$ to be the $(i+1)$-th self-coproduct of $(\fS, E)$ and $(A_{\text{inf}}(\cO_{\widehat{F}}), (\xi))$, respectively. We have the perfectoid $K$-algebras $\widehat{K}_{\infty}$ and $\widehat{F}$. Let us denote the $(i+1)$-th self-coproduct of $\widehat{K}_{\infty}$ and $\widehat{F}$ by $K_{\infty}^{(i)}$ and $\widehat{F}^{(i)}$, respectively. By Lem. 4.2.8 in \cite{DL}, we have the following result on these coproducts: 

$$(\fS^{(i)})_{\text{perf}}[1/E]^\wedge_p \cong W((K_{\infty}^{(i)})^\flat) \hspace{50pt} (A_{\text{inf}}(\cO_{\widehat{F}})^{(i)})_{\text{perf}}[1/\xi]^\wedge_p \cong W((\widehat{F}^{(i)})^\flat).$$

\vspace{10pt}

\noindent If $\text{Cont}(\widehat{G}^{i}, \widehat{F})$ is the algebra of all continuous functions from $\widehat{G}^{i}$ to $\widehat{F}$, then by Thm. 5.6 of \cite{Wu}, one has $$\widehat{F}^{(i)} \cong \text{Cont} (\widehat{G}^{i}, \widehat{F}).$$

\noindent In particular, for $i=1$, the two structure maps $i_1, i_2 : \widehat{F} \rightarrow \widehat{F}^{(1)}$ induces $j_1, j_2 : \widehat{F} \rightarrow \text{Cont} (\widehat{G}, \widehat{F})$. Explicitly, the maps $j_1, j_2$ are given by $j_1(x): \gamma \mapsto \gamma(x)$ and $j_2(x) : \gamma \mapsto x$. Following Lem. 5.3 in \cite{Wu} one can see that $\text{Cont}(\widehat{G}, -)$ commutes with tilting functor $(-)^\flat$ and Witt functor $W(-)$. In particular, we have, $$W((\widehat{F}^{(1)})^\flat) \cong \text{Cont} (\widehat{G}, W(\widehat{F}^\flat)).$$ 

\noindent The action of $\widehat{G}$ on $\widehat{F}$ gives rise to an action of $\widehat{G}^2$ on $\widehat{F}^{(1)}$ by universal pushout in the category of perfectoid $K$-algebras. Via the above isomorphism and functoriality we get an action of $\widehat{G}^2$ on $\text{Cont} (\widehat{G}, W(\widehat{F}^\flat))$. This action is given by $(\sigma_1, \sigma_2)(f)(\gamma)=\sigma_2.f (\sigma_2^{-1} \gamma \sigma_1)$. Finally, taking $H_K^2$-invariants, we have $W(({K_{\infty}^{(1)}})^\flat) \cong \text{Cont} (\widehat{G}, W(\widehat{F}^\flat))^{H_K^2}$. \medskip

\noindent The unit map $\iota_{\text{perf}} : \fS \rightarrow \fS_{\text{perf}}$ induces the ring maps $\iota_{\text{perf}}: \cO_{\cE} \rightarrow W(\widehat{K}_{\infty}^\flat)$ and $\iota_{\text{perf}}^{(1)} : \fS^{(1)}[E^{-1}]^\wedge_p \rightarrow W(({K_{\infty}^{(1)}})^\flat)$. Consider a triplet $(\cM, \varphi_{\cM}, f)$ consisting of an étale $\varphi$-module $(\cM, \varphi_{\cM})$ over $\cO_{\cE}$ equipped with a descent data $f$ over $\fS^{(1)}[E^{-1}]^\wedge_p$. We can extend the scalar along the map $\iota^{(1)}_{\text{perf}}$ to get an étale $\varphi$-module with descent data over $W(({K_{\infty}^{(1)}})^\flat)$ given by, 

 $$f: \cM \otimes_{W(\widehat{K}_{\infty}^\flat), j_1}  W(({K}_{\infty}^{(1)})^\flat) \longrightarrow \cM \otimes_{W(\widehat{K}_{\infty}^\flat), j_2}  W(({K_{\infty}^{(1)}})^\flat)$$
compatible with Frobenii and satisfies cocycle condition over $W(({K_{\infty}^{(2)}})^\flat)$. Following the above identifications, we can rewrite it as : 
$$ \widehat{f} : \cM \otimes_{W(\widehat{K}_{\infty}^\flat), j_1}  \text{Cont} (\widehat{G}, W(\widehat{F}^\flat))^{H_K^2} \longrightarrow \cM \otimes_{W(\widehat{K}_{\infty}^\flat), j_2}  \text{Cont} (\widehat{G}, W(\widehat{F}^\flat))^{H_K^2}.$$

\noindent Now for each $\gamma \in \widehat{G}$, we have an evaluation map $e_{\gamma} : \text{Cont} (\widehat{G}, W(\widehat{F}^\flat)) \rightarrow W(\widehat{F}^\flat)$. Moreover, it can checked that the compositions: $e_{\gamma} \circ j_1 : W(\widehat{K}_{\infty}^\flat) \rightarrow W(\widehat{F}^\flat)$ is $x \mapsto \gamma(x)$ and $e_{\gamma} \circ j_2 : W(\widehat{K}_{\infty}^\flat) \rightarrow W(\widehat{F}^\flat)$ is just the inclusion. Therefore for each $\gamma \in \widehat{G}$, we get the isomorphism $$f_{\gamma} : \cM \otimes_{W(\widehat{K}_{\infty}^\flat),\gamma} W(\widehat{F}^\flat) \rightarrow \cM \otimes_{W(\widehat{K}_{\infty}^\flat)} W(\widehat{F}^\flat)$$ over $W(\widehat{F}^\flat)$ and hence a $W(\widehat{F}^\flat)$-semilinear action of $\gamma$ on $\widehat{\cM}:=\cM \otimes_{W(\widehat{K}_{\infty}^\flat)} W(\widehat{F}^\flat)$. Therefore, the collection $\{ f_{\gamma} \}_{\gamma \in \widehat{G}}$ of isomorphisms defines a $W(\widehat{F}^\flat)$-semilinear action of $\widehat{G}$ on $\widehat{\cM}:=\cM \otimes_{W(\widehat{K}_{\infty}^\flat)} W(\widehat{F}^\flat)$. Moreover, if $\gamma \in H_K$ then $f_{\gamma} = \mathrm{id}$ and so the action is trivial. Since, $\widehat{G}$ is generated by $\tau$ and $H_K$, the action is essentially incorporated by $\tau$ and its powers. In other words, it is enough to look at the isomorphisms
$$ f_{\tau^n} : \cM \otimes_{W(\widehat{K}_{\infty}^\flat),\tau} W(\widehat{F}^\flat) \rightarrow \cM \otimes_{W(\widehat{K}_{\infty}^\flat)} W(\widehat{F}^\flat) \hspace{25pt} n \geq 1;$$
 We recollect the above discussion in form of the following theorem as in \cite{DL}. 

\begin{thm}\label{dic}
(1) The category $\textbf{DD}_{\cO_{\cE}}$ is equivalent to the category of étale $(\varphi, \tau)$-module over $\cO_{\cE}$. 

(2) Given a triplet $(\cM, \varphi_{\cM}, f)$ and $\gamma \in \widehat{G}$, the semilinear action of $\gamma$ on $\widehat{\cM}=\cM \otimes_{W(\widehat{K}_{\infty}^\flat)} W(\widehat{F}^\flat)$ is given by the isomorphism $f_{\gamma} : \cM \otimes_{W(\widehat{K}_{\infty}^\flat),\gamma} W(\widehat{F}^\flat) \rightarrow \cM \otimes_{W(\widehat{K}_{\infty}^\flat)} W(\widehat{F}^\flat)$. 

(3) If $(\cM, \varphi_{\cM}, f)$ corresponds to a $\Z_p$-reprsentation $T$ of $\Gal_K$ then $f_{\gamma}$ corresponds to the semilinear action of $\gamma$ on $\cM$ inside $\cM \otimes_B W(\C_p^\flat) \cong T^\vee \otimes W(\C_p^\wedge)$. 

(4) Two descend datum $f,g$ are equal if and only if the ismorphisms $f_{\tau^m}=g_{\tau^m}$ for all positive integer $m$. 
\end{thm}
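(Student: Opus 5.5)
The plan is to make the dictionary of Theorem \ref{dic} explicit by tracing a descent datum $(\cM,\varphi_{\cM},f)$ through the perfection and the coproduct identifications recalled above. Part (1) is the composite of the equivalences in diagram (\ref{diag-descentdata}), namely Thm. 5.0.14 of \cite{DL} together with Caruso's anti-equivalence (Thm. 4.2.3 of \cite{DL}). What remains is to check that the functor obtained by scalar extension along $\iota^{(1)}_{\text{perf}}$ agrees with that composite and is described by the formulas in (2)–(4).

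\textbf{Constructing the action.} First I extend $f$ along $\iota^{(1)}_{\text{perf}}:\fS^{(1)}[E^{-1}]^\wedge_p\to W((K_\infty^{(1)})^\flat)$. Next I use Lem. 4.2.8 of \cite{DL} and Thm. 5.6 of \cite{Wu} to identify the target with $\text{Cont}(\widehat G,W(\widehat F^\flat))^{H_K^2}$. Composing with the evaluation maps $e_\gamma$ then gives the isomorphisms $f_\gamma$; here one uses $e_\gamma\circ j_1=\gamma$ and $e_\gamma\circ j_2=\mathrm{id}$.

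\textbf{Group action and Frobenius.} The cocycle condition over $\fS^{(2)}[E^{-1}]^\wedge_p$ has to be transported to $W((\widehat F^{(2)})^\flat)\cong\text{Cont}(\widehat G^2,W(\widehat F^\flat))$. Evaluating it at pairs $(\gamma_1,\gamma_2)$ should give $f_{\gamma_1\gamma_2}=f_{\gamma_1}\circ\gamma_1^\ast f_{\gamma_2}$, which is exactly the axiom for a semilinear group action. Continuity of the action comes from the fact that $f$ lands in continuous functions. Frobenius compatibility of $f$ gives that the action commutes with $\varphi$. $H_K$-invariance, i.e. $\cM\subset\widehat{\cM}^{\Gal_{K_\infty}}$, holds because the data is $H_K^2$-invariant, so $f_\gamma=\mathrm{id}$ for $\gamma\in H_K$. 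Together these establish (2) and produce an object of $\text{Mod}^{\varphi,\widehat G}_{\cO_{\cE},W(\widehat F^\flat)}$.

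\textbf{Part (3).} I evaluate the Laurent $F$-crystal on the $A_{\text{inf}}$-prism and use functoriality along $\fS\to\fS_{\text{perf}}\to W(\C_p^\flat)$. The coproduct maps are compatible with the Galois-twisted maps $A_{\text{inf}}\to A_{\text{inf}}$, so the descent isomorphism $f_\gamma$ becomes the action of $\gamma$ on $T^\vee\otimes W(\C_p^\flat)$ coming from $(\cM\otimes W(\C_p^\flat))^{\varphi=1}=T^\vee$.

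\textbf{Part (4).} The forward direction is trivial. For the converse, $f_\gamma$ depends only on $\gamma$ modulo $H_K$, and the equality $f_\gamma=g_\gamma$ is multiplicative and continuous in $\gamma$. Since $\widehat G$ is topologically generated by $\tau$ and $H_K$, agreement on all $\tau^m$ gives agreement on all $\gamma$, hence equality of the maps in $\text{Cont}(\widehat G,-)$. Finally, $\iota^{(1)}_{\text{perf}}$ is injective on $\fS^{(1)}[E^{-1}]^\wedge_p$-modules, which I would argue via faithful flatness (or $p$-torsion-freeness together with injectivity modulo $p$) of $\fS^{(1)}\to(\fS^{(1)})_{\text{perf}}$ after inverting $E$. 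It follows that $f=g$.

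The main obstacle is this last injectivity, together with the bookkeeping of the $\widehat G^2$-action identification $W((K_\infty^{(1)})^\flat)\cong\text{Cont}(\widehat G,W(\widehat F^\flat))^{H_K^2}$. For both I would rely on Subsec. 4.2 of \cite{DL}.
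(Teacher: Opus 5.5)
Your proposal is correct and is essentially the paper's argument. The paper builds $f_\gamma$ exactly as you do (extension along $\iota^{(1)}_{\text{perf}}$, the identification with $\text{Cont}(\widehat G,W(\widehat F^\flat))^{H_K^2}$, and evaluation $e_\gamma$), then cites Thm.~4.6 of \cite{Wu} for (1)--(2) and Thm.~4.2.11, Lem.~4.2.12 of \cite{DL} for (3)--(4). Your sketch unpacks those citations, deducing (1) from (3) and the two known equivalences with $\text{Rep}_{\Z_p}(\Gal_K)$ rather than from \cite{Wu} directly, and it correctly isolates the injectivity of $\fS^{(1)}[E^{-1}]^\wedge_p\to W((K_\infty^{(1)})^\flat)$ as the input to be taken from \cite{DL}.
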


\begin{proof}
(1) and (2) follows from above discussion and Thm. 4.6 in \cite{Wu}. (3) and (4) are given as Thm. 4.2.11 and Lem. 4.2.12 in \cite{DL}.
\end{proof}

\noindent 
\noindent

\vspace{15pt}

\subsection{Lattices in Semistable Galois Representations:}\label{sec-2-5}

 We now describe the equivalence between the category of $\Z_p$-lattices in semistable $p$-adic representation of $\Gal_L$ (resp. $\Gal_{L_g}$) and the integral Kisin descent datum over $\fS_L$ (resp. $\fS_{L_g}$) following \cite{DLMSII} (resp. \cite{DL}). 

\begin{defn}
The category $\textbf{DD}_{\fS_L}$ (resp. $\textbf{DD}_{\fS_{L_g}}$) consists of the objects $(\fM_L, \varphi_{\fM_L}, f)$ (resp. $(\fM_{L_g}, \varphi_{\fM_{L_g}}, f_g)$) such that

(1) $(\fM_L, \varphi_{\fM_L})$ (resp. $(\fM_{L_g}, \varphi_{\fM_{L_g}})$) is an object of $\text{Mod}_{\fS_L}^{\varphi}$ (resp. $\text{Mod}_{\fS_{L_g}}^{\varphi}$),

(2) $f$ (resp. $f_g$) is isomorphism:
$$ f : \fS_L^{(1)} \otimes_{\fS_L, p_1} \fM_L \rightarrow \fS_L^{(1)} \otimes_{\fS_L, p_2} \fM_L $$
$$ (\text{resp.}\ f_g : \fS_{L_g}^{(1)} \otimes_{\fS_{L_g}, p_1} \fM_{L_g} \rightarrow \fS_{L_g}^{(1)} \otimes_{\fS_{L_g}, p_2} \fM_{L_g}) $$
over $\fS_L^{(1)}$ (resp. $\fS_{L_g}^{(1)}$) that are compatible with Frobenii and satisfies cocycle condition over $\fS_L^{(2)}$ (resp. $\fS_{L_g}^{(2)}$). 
\end{defn}

\noindent The following result is Thm. 5.0.18 in \cite{DL} for the CDVF $L_g$ with perfect residue field and Prop. 4.8 in \cite{DLMSII} for the CDVF $L$ with imperfect residue field.

\begin{thm}\label{st-dd}
There is a categorical equivalence: 
$$ \textbf{Rep}^{\text{st}}_{\Z_p}(\Gal_L) \longrightarrow \textbf{DD}_{\fS_L}\  (\text{resp.}\ \textbf{Rep}^{\text{st}}_{\Z_p}(\Gal_{L_g}) \longrightarrow \textbf{DD}_{\fS_{L_g}})$$
such that if a semistable $\Z_p$-lattice $T$ corresponds to the descent data $(\fM_L, \varphi_{\fM_L}, f)$ (resp. $(\fM_{L_g}, \varphi_{\fM_{L_g}}, f_g)$) then $T^\vee = (W(\C_p^\flat) \otimes_{\fS_L} \fM_L)^{\varphi=1}$ (resp. $T^\vee = (W(\C_p^\flat) \otimes_{\fS_{L_g}} \fM_{L_g})^{\varphi=1}$). 
\end{thm}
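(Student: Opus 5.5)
This statement (Thm. \ref{st-dd}) is quoted from the literature, so the plan is to assemble it from the cited equivalences rather than reprove them. I treat $L_g$ (perfect residue field) and $L$ (imperfect residue field) in parallel, and describe the functor as a composite.

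\textbf{Step 1.} By Thm. 1.8 / Prop. 4.8 of \cite{DLMSII} (resp. Thm. 5.0.18 of \cite{DL}), a $\Z_p$-lattice $T$ in a semistable representation with non-negative Hodge--Tate weights lies in the essential image of the étale realization $T_Y$ of Def. \ref{etale-realization}. Here $Y=\mathrm{Spf}(\cO_L)$ carries the log structure of Subsec. \ref{sec-2-2}. Moreover $T_Y$ is fully faithful by Prop. 3.20 of \cite{DLMSII}. Hence $\textbf{Rep}^{\text{st}}_{\Z_p}(\Gal_L)$ is equivalent to $\text{Vect}^{\text{an},\varphi}((Y,M_Y)_{\Prism})$; after a Tate twist reduce to effective objects.

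\textbf{Step 2.} Evaluate an analytic $F$-crystal at the Breuil--Kisin log prism $(\fS_L,E,\N)^a$ and its self-coproducts $\fS_L^{(1)},\fS_L^{(2)}$. Since $(\fS_L,E,\N)^a$ covers the final object (Lem. 2.8 of \cite{DLMSII}) and $p_1,p_2$ are faithfully flat, Lem. 3.8 of \cite{DLMSII} yields the equivalence with $\textbf{DD}_{\fS_L}$, via $\fM_L=H^0(\mathrm{Spec}\fS_L\setminus V(p,E),\cE)$. Since $\fS_L$ is two-dimensional regular, a reflexive module over it is free. Thus $\fM_L$ is finite free, as in Cor. 3.9 of \cite{DLMSII}, and lies in $\text{Mod}^\varphi_{\fS_L}$.

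\textbf{Step 3 (formula for $T^\vee$).} Evaluate the Laurent crystal $T_Y(\cE)$ at the $A_{\text{inf}}$ log prism, which receives the map $\fS_L\to W(\C_p^\flat)$ given by $u\mapsto[\pi^\flat]$, $X_i\mapsto[X_i^\flat]$. This gives $T^\vee=(W(\C_p^\flat)[1/\xi]^\wedge_p\otimes_{\fS_L}\fM_L)^{\varphi=1}$, compatibly with $\Gal_L$. The remaining task, and the main obstacle, is to remove the inversion of $E$. I would use the finite $E$-height condition: a $\varphi$-invariant vector $x$ with $E^{N}x$ integral forces $x$ integral. To see this, compare $\varphi(E)^N$ with $E^N$ using $\varphi$-stability and the fact that $\varphi^n(E)$ is a unit in $W(\C_p^\flat)[1/[\pi^\flat]]$ up to $p$. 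This is the standard argument of Kisin/Liu showing $(\fM\otimes W(\C_p^\flat))^{\varphi=1}=(\fM\otimes W(\C_p^\flat)[1/E]^\wedge_p)^{\varphi=1}$, and it gives the stated formula.
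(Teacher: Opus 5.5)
Your overall route is the paper's. The paper gives no argument of its own: it quotes Thm.~5.0.18 of \cite{DL} for $L_g$ and Prop.~4.8 of \cite{DLMSII} for $L$. Your Steps~1--2 are a fair unpacking of those citations: evaluation at the Breuil--Kisin log prism and its coproducts via Lem.~3.8 of \cite{DLMSII}, and freeness of $\fM_L$ because $\fS_L$ is a two-dimensional regular local ring, as in Cor.~3.9 there. One small correction to Step~1. Objects of $\textbf{DD}_{\fS_L}$ are effective, so the equivalence must be read with $\textbf{Rep}^{\text{st}}_{\Z_p}$ meaning lattices in semistable representations with non-negative Hodge--Tate weights. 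The paper does this tacitly when it asserts that semistable lattices are of finite $E(u)$-height. A Tate twist moves any lattice into this subcategory, but it does not make the whole semistable category equivalent to the effective one.

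Step~3 goes astray: the obstacle you identify does not exist. Here $W(\C_p^\flat)$ is the ring of Witt vectors of the field $\C_p^\flat$, not $A_{\text{inf}}=W(\cO_{\C_p^\flat})$. In it, the image $E([\pi^\flat])$ of $E$ reduces mod $p$ to $(\pi^\flat)^e\neq 0$, so it is a unit. Hence $W(\C_p^\flat)[1/\xi]^\wedge_p=W(\C_p^\flat)$, and $W(\C_p^\flat)\otimes_{\fS_L}\fM_L=W(\C_p^\flat)\otimes_{\cO_{\cE,L}}\cM_L$ with $\cM_L=\cO_{\cE,L}\otimes_{\fS_L}\fM_L$. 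So the formula for $T^\vee$ is literally the \'etale-realization formula written in the paper's introduction, and there is no inversion of $E$ to remove.

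Moreover, your integrality argument would be false in the integral setting it seems to have in mind. Over $A_{\text{inf}}$, take $\fM_L=\fS_L e$ with $\varphi(e)=Ee$. A $\varphi$-invariant $ae$ needs $a=E\varphi(a)$. Reducing mod $p$ gives $\bar a=(\pi^\flat)^e\bar a^p$ in $\cO_{\C_p^\flat}$, which forces $\bar a=0$. Since $A_{\text{inf}}$ is $p$-torsion free, iterating gives $a\in\bigcap_n p^nA_{\text{inf}}=0$. Over $W(\C_p^\flat)$, by contrast, the $\varphi$-invariants have rank one. The Kisin--Fontaine integrality result concerns $\Hom_{\fS,\varphi}(\fM,-)$, comparing $\fS^{\text{ur}}$ with $\widehat{\cO_{\cE}^{\text{ur}}}$, not $\varphi$-invariants of $\fM\otimes-$. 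Simply drop that part of Step~3.
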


\noindent The ring extension $\fS_L \rightarrow \cO_{\cE,L}$ gives the functor $\tilde{I}_{\text{ét}}: \text{Mod}_{\fS_L}^{\varphi} \rightarrow \text{Mod}_{\cO_{\cE,L}}^{\varphi}$, which is fully faithful (see arrow (2) of diagram 4.2.2 in \cite{Gao-imp}). The scalar extension along the $p$-completely flat ring map $\fS_L^{(1)} \rightarrow \fS_L^{(1)}[E^{-1}]^\wedge_p$ induces the functor $I_{\text{ét}} : \textbf{DD}_{\fS_L} \longrightarrow \textbf{DD}_{\cO_{\cE,L}}$. Clearly, $\textbf{DD}_{\fS_L}$ (resp. $\textbf{DD}_{\cO_{\cE,L}}$) is a full subcategory of $\text{Mod}_{\fS_L}^{\varphi}$ (resp. $\text{Mod}_{\cO_{\cE,L}}^{\varphi}$). In particular, the functor $I_{\text{ét}}$ is compatible with $\tilde{I}_{\text{ét}}$. So, the functor $I_{\text{ét}}$ is fully faithful. Now by definition of $\textbf{DD}_{\cO_{\cE}}^{\text{fin}}$ the essential image of $I_{\text{ét}}$ is conatined in the subcategory of finite height objects in $\textbf{DD}_{\cO_{\cE,L}}$. On the other hand, it is well-known that a semistable $\Z_p$-lattice must be of finite $E(u)$-height (in fact, with respect to any choice of $\overrightarrow{\pi}$ by Thm. 7.3.1 in \cite{Gao}). Therefore, we have the following $2$-commutative diagram :

\begin{equation}\label{diag-st-fin}
\begin{tikzcd}
\textbf{DD}_{\fS_L} \arrow[r, hook, "I_{\text{ét}}"] \arrow[d, "\cong"] & \textbf{DD}^{\text{fin}}_{\cO_{\cE,L}} \arrow[d, "\cong"] \\
\textbf{Rep}^{\text{st}}_{\Z_p}(\Gal_L) \arrow[r, hook]           &   \textbf{Rep}^{\text{fin}}_{\Z_p}(\Gal_L)        
\end{tikzcd}
\end{equation}
\noindent Here, upper and lower arrows are fully faithful functor and the left, right functors are equivalences. Similar phenomenon holds replacing $L$ by $L_g$. \smallskip

\vspace{15pt}

\noindent  

\section{Pullback of descent data and  restriction of Galois representation}\label{sec-3}

\noindent In this section, we describe for $n \geq 1$ the correspondence between the pullback of finite height Kisin descent data over $\fS^{(1)}_L[E^{-1}]^\wedge_p$ along the map $\iota_n^{(1)} : \fS_L^{(1)}[E^{-1}]^\wedge_p \rightarrow \fS_{L_n}^{(1)}[E^{-1}]^\wedge_p$ induced by $\iota_n^{(1)} : \fS_L^{(1)} \rightarrow \fS_{L_n}^{(1)}$ from Subsec. \ref{sec-2-2} and the restriction $T|_{\Gal_{L_n}}$ of $\Z_p$-representation $T$ of $\Gal_L$.\medskip

\subsection{Pullback of étale $\varphi$-modules and Breuil-Kisin modules along $\iota_n$:}

\noindent Let $(\cM, \varphi_{\cM})$ be an étale $\varphi$-module over $\cO_{\cE,L}=\fS_L[E^{-1}]^\wedge_p$. Recall from Lem. \ref{Frobenius} that for every $n$, we have the map $ \iota_n : (\fS_L, E, \N)^a \rightarrow (\fS_{L_n}, E_n, \N)^a$.
This induces the map $\iota_n : \fS_L[E^{-1}]^\wedge_p \rightarrow \fS_{L_n}[E_n^{-1}]^\wedge_p$. We consider the module $\cM_{L_n} : = \fS_{L_n}[E_n^{-1}]^\wedge_p \otimes_{\iota_n, \fS_L[E^{-1}]^\wedge_p} \cM$ over $\fS_{L_n}[E_n^{-1}]^\wedge_p$\footnote{Note that $\fS_{L_n}[E_n^{-1}]^\wedge_p = \fS_{L}[E^{-1}]^\wedge_p$.}. We equip it with a $\varphi$-semilinear endomorphism $\varphi_{\cM_{L_n}}=\varphi \otimes \varphi_{\cM}$ on $\cM_{L_n}$.

\begin{prop}\label{extn-etalephi}
Let $(\mathcal{M}_L, \varphi_{\mathcal{M}_L})$ be an étale $\varphi$-module over 
$\mathfrak{S}_L[E^{-1}]^\wedge_p$. Then $(\mathcal{M}_{L_n}, \varphi_{\mathcal{M}_{L_n}})$ 
is an étale $\varphi$-module over $\mathfrak{S}_{L_n}[E_n^{-1}]^\wedge_p$.
\end{prop}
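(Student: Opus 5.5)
The plan is to check the two defining properties of an étale $\varphi$-module directly: finite generation (projectivity) of $\cM_{L_n}$, and that the linearization of $\varphi_{\cM_{L_n}}$ is an isomorphism. Both should come from base change along $\iota_n$, once we know that $\iota_n$ commutes with Frobenius.

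First, verify that $\iota_n : \fS_L[E^{-1}]^\wedge_p \rightarrow \fS_{L_n}[E_n^{-1}]^\wedge_p$ is well defined and $\varphi$-equivariant. On $\fS_L$ it sends $u \mapsto u^{p^n}$ and is the identity on $\cO_{L_0}$. By Lem. \ref{Frobenius} it is a map of prisms, so $\iota_n(E) = E_n$; hence $E$ becomes invertible and the map extends to the $p$-adic completions. Frobenius compatibility is $\varphi(\iota_n(u)) = u^{p^{n+1}} = \iota_n(\varphi(u))$, and on $\cO_{L_0}$ both maps act the same way. Consequently $\varphi_{\cM_{L_n}} = \varphi \otimes \varphi_{\cM}$ is well defined: it respects the tensor relations $a x \otimes m = x \otimes \iota_n(a)\, m$ precisely because $\varphi \circ \iota_n = \iota_n \circ \varphi$. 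It is $\varphi$-semilinear by construction.

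Second, $\cM_{L_n}$ is finitely generated, and projective whenever $\cM_L$ is, since base change preserves both properties.

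Third, compute the linearization. Using $\varphi \circ \iota_n = \iota_n \circ \varphi$, there is a canonical isomorphism
$$\varphi^\star \cM_{L_n} = \fS_{L_n}[E_n^{-1}]^\wedge_p \otimes_{\varphi} \big(\fS_{L_n}[E_n^{-1}]^\wedge_p \otimes_{\iota_n} \cM_L\big) \cong \fS_{L_n}[E_n^{-1}]^\wedge_p \otimes_{\iota_n} \big(\fS_L[E^{-1}]^\wedge_p \otimes_{\varphi} \cM_L\big) = \iota_n^\star \varphi^\star \cM_L .$$
Under this identification, $\varphi_{\cM_{L_n}}^\star$ corresponds to $\mathrm{id} \otimes \varphi_{\cM_L}^\star$. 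Since $\varphi_{\cM_L}^\star$ is an isomorphism, so is its base change.

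The main point to check carefully is this identification of the two iterated base changes and the fact that the linearized maps match under it. It is routine once equivariance is in hand, so the step that actually needs care is Frobenius compatibility of $\iota_n$ on the completed localized rings, which follows by continuity from the polynomial level.
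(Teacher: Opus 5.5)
Your proposal is correct and follows essentially the same route as the paper. The paper's isomorphism $\beta\circ\alpha$ is exactly your identification $\varphi^\star \cM_{L_n} \cong \iota_n^\star \varphi^\star \cM_L$, built from $\varphi\circ\iota_n = \iota_n\circ\varphi$. Under it, the linearization of $\varphi_{\cM_{L_n}}$ becomes $\mathrm{id}\otimes\varphi^\star_{\cM_L}$, which is an isomorphism by base change. Your extra checks (that $\varphi\otimes\varphi_{\cM_L}$ is well defined, and that finite generation and projectivity are preserved) are left implicit in the paper but are correct.
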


\begin{proof}

We have
\[
\mathcal{M}_{L_n} = \mathfrak{S}_{L_n}[E_n^{-1}]^\wedge_p \otimes_{\iota_n, \mathfrak{S}_L[E^{-1}]^\wedge_p} \mathcal{M}_L,
\]
equipped with the $\varphi$-semilinear endomorphism 
$\varphi_{\mathcal{M}_{L_n}} := \varphi \otimes \varphi_{\mathcal{M}_L}$. To show that 
$(\mathcal{M}_{L_n}, \varphi_{\mathcal{M}_{L_n}})$ is étale, we must show that the 
$\varphi$-linearization map
\[
\Phi_{\cM_{L_n}}:= \mathrm{id} \otimes \varphi_{\mathcal{M}_{L_n}} \colon \mathfrak{S}_{L_n}[E_n^{-1}]^\wedge_p \otimes_{\varphi,\mathfrak{S}_{L_n}[E_n^{-1}]^\wedge_p} \mathcal{M}_{L_n} 
\longrightarrow \mathcal{M}_{L_n}, 
\qquad s \otimes m \longmapsto s \cdot \varphi_{\mathcal{M}_{L_n}}(m),
\]
is an isomorphism. We do this by fitting $\Phi_{\cM_{L_n}}$ into a commutative diagram 
in which the other three maps are isomorphisms. Note that $\iota_n \circ \varphi = \varphi \circ \iota_n$. Define
\[
\alpha \colon \mathfrak{S}_{L_n}[E_n^{-1}]^\wedge_p \otimes_{\varphi,\mathfrak{S}_{L_n}[E_n^{-1}]^\wedge_p} 
(\mathfrak{S}_{L_n}[E_n^{-1}]^\wedge_p \otimes_{\iota_n,\mathfrak{S}_L[E^{-1}]^\wedge_p} \mathcal{M}_L) 
\xrightarrow{\;\sim\;} 
\mathfrak{S}_{L_n}[E_n^{-1}]^\wedge_p \otimes_{\varphi \circ \iota_n,\, \mathfrak{S}_{L}[E^{-1}]^\wedge_p} \mathcal{M}_L
\]
by $s \otimes (t \otimes m) \mapsto s\varphi(t) \otimes m$, and
\[
\beta \colon \mathfrak{S}_{L_n}[E_n^{-1}]^\wedge_p \otimes_{\iota_n \circ \varphi,\, \mathfrak{S}_{L}[E^{-1}]^\wedge_p} \mathcal{M}_L 
\xrightarrow{\;\sim\;} 
\mathfrak{S}_{L_n}[E_n^{-1}]^\wedge_p \otimes_{\iota_n,\, \mathfrak{S}_{L}[E^{-1}]^\wedge_p} 
(\mathfrak{S}_{L}[E^{-1}]^\wedge_p \otimes_{\varphi,\, \mathfrak{S}_{L}[E^{-1}]^\wedge_p} \mathcal{M}_L)
\]
by $s \otimes m \mapsto s \otimes (1 \otimes m)$, with inverse 
$s \otimes (f \otimes m) \mapsto s(\iota_n(f)) \otimes m$. Both are standard 
associativity isomorphisms of tensor products, and in particular are isomorphisms.
Since $\mathcal{M}_{L_n} = \mathfrak{S}_{L_n}[E_n^{-1}]^\wedge_p \otimes_{\iota_n,\mathfrak{S}_{L}[E^{-1}]^\wedge_p} \mathcal{M}_L$,
the composition $\beta \circ \alpha$ is an isomorphism
\[
\beta \circ \alpha \colon 
\mathfrak{S}_{L_n}[E_n^{-1}]^\wedge_p \otimes_{\varphi,\, \mathfrak{S}_{L_n}[E_n^{-1}]^\wedge_p} \mathcal{M}_{L_n}
\xrightarrow{\;\sim\;}
\mathfrak{S}_{L_n}[E_n^{-1}]^\wedge_p \otimes_{\iota_n,\, \mathfrak{S}_{L}[E^{-1}]^\wedge_p} 
(\mathfrak{S}_{L}[E^{-1}]^\wedge_p \otimes_{\varphi,\, \mathfrak{S}_{L}[E^{-1}]^\wedge_p} \mathcal{M}_L).
\]

\noindent We claim that the following diagram commutes:
\begin{equation}\label{diag-etalephimodule}
\begin{tikzcd}[column sep = 5em, row sep = 3em]
\mathfrak{S}_{L_n}[E_n^{-1}]^\wedge_p \otimes_{\varphi,\, \mathfrak{S}_{L_n}[E_n^{-1}]^\wedge_p} \mathcal{M}_{L_n}
\arrow[r, "\Phi_{\cM_{L_n}}"]
\arrow[d, "\beta \circ \alpha"', "\sim"]
&
\mathcal{M}_{L_n}
\arrow[d, equal]
\\
\mathfrak{S}_{L_n}[E_n^{-1}]^\wedge_p \otimes_{\iota_n,\, \mathfrak{S}_{L}[E^{-1}]^\wedge_p} 
(\mathfrak{S}_{L}[E^{-1}]^\wedge_p \otimes_{\varphi,\, \mathfrak{S}_{L}[E^{-1}]^\wedge_p} \mathcal{M}_L)
\arrow[r, "\mathrm{id} \otimes\, \Phi_{\mathcal{M}_L}"']
&
\mathfrak{S}_{L_n}[E_n^{-1}]^\wedge_p \otimes_{\iota_n,\, \mathfrak{S}_{L}[E^{-1}]^\wedge_p} \mathcal{M}_L
\end{tikzcd}
\end{equation}
where $\Phi_{\mathcal{M}_L} := \mathrm{id} \otimes \varphi_{\mathcal{M}_L} \colon 
\mathfrak{S}_{L}[E^{-1}]^\wedge_p \otimes_{\varphi,\mathfrak{S}_{L}[E^{-1}]^\wedge_p} \mathcal{M}_L \to \mathcal{M}_L$ 
is the linearization of $\varphi_{\mathcal{M}_L}$.
To verify commutativity, let $s \otimes (t \otimes m)$ be a pure tensor in 
$\mathfrak{S}_{L_n}[E_n^{-1}]^\wedge_p \otimes_{\varphi,\mathfrak{S}_{L_n}[E_n^{-1}]^\wedge_p} \mathcal{M}_{L_n}$.
Following the top-right path:
\[
s \otimes (t \otimes m) 
\xrightarrow{\;\Phi_n\;} 
s \cdot \varphi_{\mathcal{M}_{L_n}}(t \otimes m) 
= s \cdot (\varphi(t) \otimes \varphi_{\mathcal{M}_L}(m)) 
= s\varphi(t) \otimes \varphi_{\mathcal{M}_L}(m).
\]
Following the left-bottom path:
\[
s \otimes (t \otimes m)
\xrightarrow{\;\beta \circ \alpha\;}
s\varphi(t) \otimes (1 \otimes m)
\xrightarrow{\;\mathrm{id} \otimes\, \Phi_{\mathcal{M}_L}\;}
s\varphi(t) \otimes \varphi_{\mathcal{M}_L}(m).
\]
So, the diagram commutes. Since $(\mathcal{M}_L, \varphi_{\mathcal{M}_L})$ is étale, $\Phi_{\mathcal{M}_L}$ is an 
isomorphism. Base-changing along $\iota_n \colon \mathfrak{S}_{L}[E^{-1}]^\wedge_p \to \mathfrak{S}_{L_n}[E_n^{-1}]^\wedge_p$, 
it follows that $\mathrm{id} \otimes \Phi_{\mathcal{M}_L}$ is also an isomorphism. 
Since $\beta \circ \alpha$ is an isomorphism, the commutative diagram 
in above shows that $\Phi_{\cM_{L_n}}$ is an isomorphism. Hence 
$(\mathcal{M}_{L_n}, \varphi_{\mathcal{M}_{L_n}})$ is an étale $\varphi$-module 
over $\mathfrak{S}_{L_n}[E_n^{-1}]^\wedge_p$.
\end{proof}

\noindent Let us write $\cO_{\cE,L}=\fS_L[E^{-1}]^\wedge_p=\fS_{L_n}[E_n^{-1}]^\wedge_p$. By Prop. \ref{extn-etalephi}, $(\cM_{L_n}, \varphi_{\cM_{L_n}})$ is an object of $\text{Mod}^\varphi_{\cO_{\cE,L}}$. Now, $(\cM_L, \varphi_{\cM_L})$ and $(\cM_{L_n}, \varphi_{\cM_{L_n}})$ correspond to the $\Z_p$-lattices $T=(\cM_{L} \otimes_{\cO_{\cE,L}} \widehat{\cO_{\cE,L}^{ur}})^{\varphi=1}$ and $T_n=(\cM_{L_n} \otimes_{\cO_{\cE,L}} \widehat{\cO_{\cE,L}^{ur}})^{\varphi=1}$. They differ by scalar extension along $\iota_n$, which is Frobenius invariant and $\Gal_{L_{\infty}}$-equivariant. So, $T$ and $T_n$ are isomorphic objects in $\text{Rep}_{\Z_p}(\Gal_{L_{\infty}})$. Therefore, by Thm. \ref{Kim} the objects $(\cM_L, \varphi_{\cM_L})$ and $(\cM_{L_n}, \varphi_{\cM_{L_n}})$ are isomorphic in $\text{Mod}^\varphi_{\cO_{\cE,L}}$.\medskip

\noindent Let us now consider a finite $E$-height Breuil-Kisin module $(\fM_L, \varphi_{\fM_L})$ over $\fS_L$. For every positive interger $n$, we have $\fM_{L_n} := \fS_{L_n} \otimes_{\iota_n, \fS_L} \fM_L$, which is a finite free $\fS_{L_n}$-module equipped with a $\varphi$-semilinear endomorphism $\varphi \otimes \varphi_{\fM_L}$. A similar argument as in Prop. \ref{extn-etalephi} shows that $\fM_{L_n}$ is a Breuil-Kisin module of finite $E_n$-height. We include the proof for completeness.

\begin{prop}
Let $(\mathfrak{M}_L, \varphi_{\mathfrak{M}_L})$ be a Breuil--Kisin module over $\mathfrak{S}_L$ of 
$E$-height $r$. Then $(\mathfrak{M}_{L_n}, \varphi_{\mathfrak{M}_{L_n}})$ is a Breuil--Kisin module  over $\mathfrak{S}_{L_n}$ of $E_n$-height $r$.
\end{prop}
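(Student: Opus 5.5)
The plan is to mimic the proof of Prop. \ref{extn-etalephi}, replacing $\cO_{\cE,L}$ by $\fS_L$, and then track how the cokernel of the linearization behaves under base change along $\iota_n$. First, $\fM_{L_n} = \fS_{L_n} \otimes_{\iota_n, \fS_L} \fM_L$ is finite free over $\fS_{L_n}$, since $\fM_L$ is finite free over $\fS_L$ and base change preserves freeness and rank. The endomorphism $\varphi_{\fM_{L_n}} = \varphi \otimes \varphi_{\fM_L}$ is well defined and $\varphi$-semilinear, because $\iota_n \circ \varphi = \varphi \circ \iota_n$ on $\fS_L$; both maps send $u \mapsto u^{p^{n+1}}$ and act on $\cO_{L_0}$ by the Frobenius.

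Next I would build the same commutative square as in (\ref{diag-etalephimodule}), with $\fS_{L_n}$ and $\fS_L$ in place of the Laurent rings. The associativity isomorphisms $\alpha$ and $\beta$ identify
\[
\varphi^\star \fM_{L_n} = \fS_{L_n} \otimes_{\varphi, \fS_{L_n}} \fM_{L_n} \;\cong\; \fS_{L_n} \otimes_{\iota_n, \fS_L} \bigl(\fS_L \otimes_{\varphi, \fS_L} \fM_L\bigr) = \iota_n^\star(\varphi^\star \fM_L).
\]
Under this identification, the linearization $\varphi^\star_{\fM_{L_n}}$ becomes $\mathrm{id} \otimes \varphi^\star_{\fM_L}$. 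The verification on pure tensors is exactly the one already carried out.

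Finally I would compute cokernels. Right exactness of $\fS_{L_n} \otimes_{\iota_n, \fS_L} -$ gives
\[
\mathrm{coker}(\varphi^\star_{\fM_{L_n}}) \cong \fS_{L_n} \otimes_{\iota_n, \fS_L} \mathrm{coker}(\varphi^\star_{\fM_L}).
\]
The module on the right is killed by $\iota_n(E(u))^r = E(u^{p^n})^r = E_n^r$, because $\mathrm{coker}(\varphi^\star_{\fM_L})$ is killed by $E^r$. Hence $E_n^r \fM_{L_n} \subset \mathrm{Im}(\varphi^\star_{\fM_{L_n}})$, which is the claim.

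The proof is essentially formal. The only point needing care is the exact value "height $r$", as opposed to height $\leq r$. If "height $r$" is read as the minimal such $r$, I would also check minimality. For this I would use that $\iota_n : \fS_L \to \fS_{L_n}$ is faithfully flat, since $\fS_{L_n}$ is free over $\iota_n(\fS_L)$ with basis $1, u, \dots, u^{p^n-1}$. Then $E_n^{r-1}$ killing the base-changed cokernel forces $E^{r-1}$ to kill the original cokernel, so the minimal exponent is preserved.
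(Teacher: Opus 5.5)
Your proposal is correct and follows the paper's proof essentially verbatim: you use the same $\alpha,\beta$ identification of $\varphi^\star\fM_{L_n}$ with $\fS_{L_n}\otimes_{\iota_n,\fS_L}(\varphi^\star\fM_L)$, and then base change the height condition along $\iota_n$ (your right-exactness phrasing for cokernels is equivalent to the paper's image-inclusion phrasing). Your extra faithful-flatness argument, showing that the \emph{minimal} height is preserved, goes slightly beyond the paper, which only proves that $E_n^r$ kills the cokernel; it is also correct, since $\fS_{L_n}$ is free over $\iota_n(\fS_L)$ with basis $1,u,\dots,u^{p^n-1}$.
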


\begin{proof}
Since $\mathfrak{M}_L$ is a finite free $\mathfrak{S}_L$-module, the scalar extension 
$\mathfrak{M}_{L_n} = \mathfrak{S}_{L_n} \otimes_{\iota_n,\, \mathfrak{S}_L} \mathfrak{M}_L$ 
is a finite free $\mathfrak{S}_{L_n}$-module. It remains to show that the cokernel 
of the $\varphi$-linearization map
\[
\mathrm{id} \otimes \varphi_{\fM_{L_n}} \colon \mathfrak{S}_{L_n} \otimes_{\varphi,\, \mathfrak{S}_{L_n}} \mathfrak{M}_{L_n} 
\longrightarrow \mathfrak{M}_{L_n}\ \text{defined by}
\qquad s \otimes m \longmapsto s \cdot \varphi_{\mathfrak{M}_{L_n}}(m),
\]
is killed by $E_n^r$.\smallskip

\noindent As in the proof of Prop. \ref{extn-etalephi}, we have
$\alpha \colon \mathfrak{S}_{L_n} \otimes_{\varphi,\, \mathfrak{S}_{L_n}} 
(\mathfrak{S}_{L_n} \otimes_{\iota_n,\, \mathfrak{S}_L} \mathfrak{M}_L) 
\xrightarrow{\;\sim\;} 
\mathfrak{S}_{L_n} \otimes_{\varphi \circ \iota_n,\, \mathfrak{S}_L} \mathfrak{M}_L$ defined by $s \otimes (t \otimes m) \mapsto s\varphi(t) \otimes m$, and $ \beta \colon \mathfrak{S}_{L_n} \otimes_{\iota_n \circ \varphi,\, \mathfrak{S}_L} \mathfrak{M}_L 
\xrightarrow{\;\sim\;} 
\mathfrak{S}_{L_n} \otimes_{\iota_n,\, \mathfrak{S}_L} 
(\mathfrak{S}_L \otimes_{\varphi,\, \mathfrak{S}_L} \mathfrak{M}_L)$
defined by $s \otimes m \mapsto s \otimes (1 \otimes m)$, with inverse 
$s \otimes (f \otimes m) \mapsto s(\iota_n(f)) \otimes m$. Therefore, we have the following commutative diagram:
\begin{equation}
\begin{tikzcd}[column sep = 5em, row sep = 3em]
\mathfrak{S}_{L_n} \otimes_{\varphi,\, \mathfrak{S}_{L_n}} \mathfrak{M}_{L_n}
\arrow[r, "\mathrm{id} \otimes \varphi_{\fM_{L_n}}"]
\arrow[d, "\beta \circ \alpha"', "\sim"]
&
\mathfrak{M}_{L_n}
\arrow[d, equal]
\\
\mathfrak{S}_{L_n} \otimes_{\iota_n,\, \mathfrak{S}_L} 
(\mathfrak{S}_L \otimes_{\varphi,\, \mathfrak{S}_L} \mathfrak{M}_L)
\arrow[r, "\mathrm{id} \otimes\, \Phi_{\mathfrak{M}_L}"']
&
\mathfrak{S}_{L_n} \otimes_{\iota_n,\, \mathfrak{S}_L} \mathfrak{M}_L
\end{tikzcd}
\end{equation}
where, $\Phi_{\mathfrak{M}_L} := \mathrm{id} \otimes \varphi_{\mathfrak{M}_L} \colon 
\mathfrak{S}_L \otimes_{\varphi,\, \mathfrak{S}_L} \mathfrak{M}_L \to \mathfrak{M}_L$
is the $\varphi$-linearization of $\varphi_{\mathfrak{M}_L}$.\smallskip

\noindent Since $(\mathfrak{M}_L, \varphi_{\mathfrak{M}_L})$ is a Breuil--Kisin module of 
height $r$, we have
$E(u)^r \mathfrak{M}_L \subset \mathrm{Im}(\Phi_{\mathfrak{M}_L})$.
Base-changing along $\iota_n \colon \mathfrak{S}_L \to \mathfrak{S}_{L_n}$, 
we obtain that $E(u^{p^n})^r \cdot (\mathfrak{S}_{L_n} \otimes_{\iota_n,\, \mathfrak{S}_L} \mathfrak{M}_L) 
\subset \mathrm{Im}(\mathrm{id} \otimes \Phi_{\mathfrak{M}_L})$. Since $\beta \circ \alpha$ is an isomorphism, the commutative diagram 
gives
$\mathrm{Im}(\mathrm{id} \otimes \varphi_{\fM_{L_n}}) = \mathrm{Im}(\mathrm{id} \otimes \Phi_{\mathfrak{M}_L})$,
and therefore
$E(u^{p^n})^r \mathfrak{M}_{L_n} \subset \mathrm{Im}(\mathrm{id} \otimes \varphi_{\fM_{L_n}})$. Hence the cokernel of $\mathrm{id} \otimes \varphi_{\fM_{L_n}}$ is killed by $E_n^r$, and 
$(\mathfrak{M}_{L_n}, \varphi_{\mathfrak{M}_{L_n}})$ is a Breuil--Kisin module 
of $E_n$-height $r$ over $\mathfrak{S}_{L_n}$.

\end{proof}

\begin{rem}
Let us emphasize that $(\fM_L, \varphi_{\fM_L})$ is a Breuil-Kisin module of finite $E$-height and $(\fM_{L_n}, \varphi_{\fM_{L_n}})$ is a Breuil-Kisin module of finite $E_n$-height and both are defined over same ring $\fS_L =\fS_{L_n}$. One can see that the image of $(\fM_L,\varphi_{\fM_L})$ under the fully faithful functor in Prop. 4.2.7 of \cite{Gao-imp} is a finite $E$-height $\Z_p$-representation of $\Gal_{L_{g,\infty}}$ whereas the same of $(\fM_{L_n}, \varphi_{\fM_{L_n}})$ is a finite $E_n$-height $\Z_p$-representation of $\Gal_{L_{g,\infty}}$. 
\end{rem}

\noindent  If $(\fM_L, \varphi_{\fM_L})$ is a Breuil-Kisin module and $(\cM_L, \varphi_{\cM_L})$ is the corresponding étale $\varphi$-module then for every $n$ the (\textit{$\iota_n$-twisted}) étale $\varphi$-module $(\cM_{L_n}, \varphi_{\cM_{L_n}})$ can be obtained from the (\textit{$\iota_n$-twisted}) Breuil-Kisin module $(\fM_{L_n}, \varphi_{\fM_{L_n}})$ as follows: \begin{align*}
\mathcal{M}_{L_n} 
&= \mathfrak{S}_{L_n}[E_n^{-1}]^\wedge_p 
   \otimes_{\iota_n,\, \mathfrak{S}_{L}[E^{-1}]^\wedge_p} \mathcal{M}_L \\
&= \mathfrak{S}_{L_n}[E_n^{-1}]^\wedge_p 
   \otimes_{\iota_n,\, \mathfrak{S}_{L}[E^{-1}]^\wedge_p} 
   \bigl(\mathfrak{S}_{L}[E^{-1}]^\wedge_p \otimes_{\mathfrak{S}_L} \mathfrak{M}_L\bigr) \\
&= \mathfrak{S}_{L_n}[E_n^{-1}]^\wedge_p 
   \otimes_{\mathfrak{S}_{L_n}} 
   \bigl(\mathfrak{S}_{L_n} \otimes_{\iota_n,\, \mathfrak{S}_L} \mathfrak{M}_L\bigr) \\
&= \mathfrak{S}_{L_n}[E_n^{-1}]^\wedge_p 
   \otimes_{\mathfrak{S}_{L_n}} \mathfrak{M}_{L_n}.
\end{align*}

\subsection{Pullback of Descent data:}

\noindent For reader's convenience we recall the following pushout diagram in the category $(Y, M_Y)_{\Prism}$ from Sec. \ref{sec-2}:

 \begin{equation}\label{diag-pushout-imperf}
\begin{tikzcd}
(\fS_{L_n}, E_n, \N)^a \arrow[rr, "p_1"]                 &                                 & (\fS^{(1)}_{L_n}, E_n, \N^2)^a                  \\
(\fS_L, E, \N)^a \arrow[u, "\iota_n"] \arrow[r, "p_1"']  & (\fS_L^{(1)}, E, \N^2)^a \arrow[ru, "\iota^{(1)}_n"']              &                    \\
                                  & (\fS_L, E, \N)^a \arrow[u, "p_2"] \arrow[r, "\iota_n"']& (\fS_{L_n}, E_n, \N)^a \arrow[uu, "p_2"']
\end{tikzcd}
\end{equation}

\noindent The universal map $\iota_n^{(1)}$ of prisms induces the ring map (again denoted by) $\iota_n^{(1)} : \fS_L^{(1)}[E^{-1}]^\wedge_p \rightarrow \fS_{L_n}^{(1)}[E_n^{-1}]^\wedge_p$. Now, consider an object $(\cM_L, \varphi_{\cM_L}, f)$ in $\textbf{DD}_{\cO_{\cE,L}}$. The descent data is given by the isomorphism 
$$f : \fS_L^{(1)}[E^{-1}]^\wedge_p \otimes_{p_1, \fS_L[E^{-1}]^\wedge_p} \cM_L \rightarrow  \fS_L^{(1)}[E^{-1}]^\wedge_p \otimes_{p_2, \fS_L[E^{-1}]^\wedge_p} \cM_L$$
compatible with Frobenii and satisfies cocycle condition over $\fS_L^{(2)}[E^{-1}]^\wedge_p$. We extend the scalar along  $\iota_n^{(1)} : \fS_L^{(1)}[E^{-1}]^\wedge_p \rightarrow \fS_{L_n}^{(1)}[E_n^{-1}]^\wedge_p$ and get the isomorphism
\begin{multline*}
f_n \colon 
\mathfrak{S}_{L_n}^{(1)}[E_n^{-1}]^\wedge_p 
\otimes_{\iota_n^{(1)},\, \mathfrak{S}_L^{(1)}[E^{-1}]^\wedge_p} 
\bigl(\mathfrak{S}_L^{(1)}[E^{-1}]^\wedge_p 
\otimes_{p_1,\, \mathfrak{S}_L[E^{-1}]^\wedge_p} \mathcal{M}_L\bigr) \\
\longrightarrow\; 
\mathfrak{S}_{L_n}^{(1)}[E_n^{-1}]^\wedge_p 
\otimes_{\iota_n^{(1)},\, \mathfrak{S}_L^{(1)}[E^{-1}]^\wedge_p} 
\bigl(\mathfrak{S}_L^{(1)}[E^{-1}]^\wedge_p 
\otimes_{p_2,\, \mathfrak{S}_L[E^{-1}]^\wedge_p} \mathcal{M}_L\bigr).
\end{multline*}
 
 \noindent From the pushout diagram, we have $\iota_n^{(1)} \circ p_i = p_i \circ \iota_n$ for $i=1,2$. Therefore, we write the isomorphism (again denoted by) $f_n$ as :
 \begin{multline*}
 f_n : \fS_{L_n}^{(1)}[E_n^{-1}]^\wedge_p \otimes_{p_1, \fS_{L_n}[E_n^{-1}]^\wedge_p} (\fS_{L_n}[E_n^{-1}]^\wedge_p \otimes_{\iota_n, \fS_L[E^{-1}]^\wedge_p} \cM_L\bigr)\\
 \longrightarrow\; 
 \fS_{L_n}^{(1)}[E_n^{-1}]^\wedge_p \otimes_{p_2, \fS_{L_n}[E_n^{-1}]^\wedge_p} (\fS_{L_n}[E_n^{-1}]^\wedge_p \otimes_{\iota_n, \fS_L[E^{-1}]^\wedge_p} \cM_L\bigr).
 \end{multline*}
 In other words, 
 \begin{equation}\label{nth-pullback}
 f_n : \fS_{L_n}^{(1)}[E_n^{-1}]^\wedge_p \otimes_{p_1, \fS_{L_n}[E_n^{-1}]^\wedge_p}  \cM_{L_n} \rightarrow 
 \fS_{L_n}^{(1)}[E_n^{-1}]^\wedge_p \otimes_{p_2, \fS_{L_n}[E_n^{-1}]^\wedge_p}  \cM_{L_n}.
 \end{equation}
 The compatibility with Frobenius holds automatically. Moreover, since the second order projection maps $p_{12}, p_{23}, p_{13}$ are compatible with $\iota_n^{(2)} : \fS_L^{(2)}[E^{-1}]^\wedge_p \rightarrow \fS_{L_n}^{(2)}[E_n^{-1}]^\wedge_p$, the isomorphism $ f_n$ satisfies cocycle conditions over $\fS_{L_n}^{(2)}[E_n^{-1}]^\wedge_p$. Therefore, we get an object $(\cM_{L_n}, \varphi_{\cM_{L_n}}, f_n)$ in $\textbf{DD}_{\cO_{\cE,L_n}}$. \medskip
 
 \vspace{15pt}
 
\subsubsection{Perfect residue field case}\label{perf-res-dic} 

 \noindent We first describe the pullback of descent data for the case of $L_g$. As in Subsec. \ref{cat-eq} we write $K=L_g$ in this subsection only. We have the Kummer tower $K_{\infty} = \cup_n K_n$; where $K_n = K(\pi_n)$ with respect to our fixed choice of $\overrightarrow{\pi}$. We have $\fS_{K_n} = \fS_{K}=\cO_{L_{0,g}}[[u]]$ as $K_n$ is totally ramified over $K$. Let $(\cM, \varphi_{\cM}, f)$ be an étale $\varphi$-module over $\fS_K[E^{-1}]^\wedge_p$ with descent data over $\fS^{(1)}_K[E^{-1}]^\wedge_p$. Write $\fS^{(1)}_K := \fS^{(1)}_{L_g}$ as defined in Subsec. \ref{sec-2-2}. Recall from Subsec. \ref{cat-eq} that the descent isomorphism $f$ extends to the isomorphism $f : W((K_{\infty}^{(1)})^\flat)\otimes_{j_1,W(\widehat{K}_{\infty}^\flat)} \cM \rightarrow W((K_{\infty}^{(1)})^\flat) \otimes_{j_2, W(\widehat{K}_{\infty}^\flat)} \cM$\footnote{By \cite{Wu}, the category of étale $\varphi$-module over $\fS[E^{-1}]^\wedge_p$ is equivalent to the same over $W(\widehat{K}_{\infty}^\flat)$ via the scalar extension $\fS[E^{-1}]^\wedge_p \rightarrow W(K_{\infty}^\flat)$ defined by $u \mapsto [\pi^\flat]$. So, there is no harm in considering the étale $\varphi$-module over $W(\widehat{K}_{\infty}^\flat)$.}. Moreover, for any $\gamma \in \widehat{G}$, extension along the evaluation map $e_{\gamma} : W((K_{\infty}^{(1)})^\flat) \rightarrow W(\widehat{F}^\flat)$ provides $f_{\gamma} : W(\widehat{F}^\flat) \otimes_{\gamma, W(\widehat{K}_{\infty}^\flat)} \cM \rightarrow W(\widehat{F}^\flat) \otimes_{ W(\widehat{K}_{\infty}^\flat)} \cM$. By Thm. \ref{dic}, the $W(\widehat{F}^\flat)$-semilinear action of $\gamma \in \widehat{G}$ on $\widehat{\cM}$ is given by the descent isomorphism $f_{\gamma}$.\smallskip

 \begin{prop}\label{res-perf}
 Let $T$ be a $\Z_p$-representation of $\Gal_K$ and $(\cM, \varphi_{\cM}, f)$ be the associated étale $\varphi$-module equipped with descend data in $\textbf{DD}_{\cO_{\cE}}$. Assume that $T|_{\Gal_{K_n}}$ denotes the restricted $\Z_p$-representation of $\Gal_{K_n}$. Then, the étale $\varphi$-module equipped with descend data corresponding to the $\Z_p$-representation $T|_{\Gal_{K_n}}$ of $\Gal_{K_n}$ is given by the triplet $(\cM_{K_n}, \varphi_{\cM_{K_n}}, f_n)$.
 \end{prop}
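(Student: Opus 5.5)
Let $T_n' $ denote the $\Z_p$-representation of $\Gal_{K_n}$ attached to $(\cM_{K_n}, \varphi_{\cM_{K_n}}, f_n)$ via the equivalence $\textbf{DD}_{\cO_{\cE,K_n}} \simeq \text{Rep}_{\Z_p}(\Gal_{K_n})$ (Thm. 5.0.14 of \cite{DL}, applied to $K_n$ with uniformizer $\pi_n$ and Eisenstein polynomial $E_n = E(u^{p^n})$). The plan is to show $T_n' \cong T|_{\Gal_{K_n}}$. By Thm. \ref{dic}, a $\Z_p$-representation is determined by its étale $\varphi$-module together with the semilinear action of $\widehat{G}$ (equivalently of the $\tau$-powers) on the scalar extension to $W(\widehat{F}^\flat)$, so it suffices to compare these two pieces of data.

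First, the étale $\varphi$-modules. The Kummer tower of $K_n$ defined by $\{\pi_{n+m}\}_m$ is the same $K_\infty$, so $\Gal_{K_{n,\infty}} = \Gal_{K_\infty}$. The map $\iota_n : u \mapsto u^{p^n}$ is compatible with the embeddings into $W(\widehat{K}_\infty^\flat)$ given by $u \mapsto [\pi^\flat]$ for $\fS_K$ and $u \mapsto [\pi_n^\flat]$ for $\fS_{K_n}$, because $[\pi_n^\flat]^{p^n} = [\pi^\flat]$. Hence $\cM_{K_n} \otimes W(\widehat{K}_\infty^\flat) = \cM \otimes W(\widehat{K}_\infty^\flat)$ as $\varphi$-modules, and by Prop. \ref{extn-etalephi} together with Thm. \ref{Kim} the $\Gal_{K_\infty}$-representations underlying $T_n'$ and $T|_{\Gal_{K_n}}$ coincide, which gives $T_n'|_{\Gal_{K_\infty}} \cong T|_{\Gal_{K_\infty}}$ compatibly with the identification $\widehat{\cM}_{K_n} = \widehat{\cM}$.

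Second, the Galois actions. The Galois closure of $K_{n,\infty}$ over $K_n$ is again $F$, and $\widehat{G}_n := \Gal(F/K_n)$ is a subgroup of $\widehat{G}$, topologically generated by $\tau^{p^n}$ and $H_{K_n} = H_K \cap \widehat{G}_n$. I would show that under the perfection maps and the identification $W((K_{n,\infty}^{(1)})^\flat) \cong \text{Cont}(\widehat{G}_n, W(\widehat{F}^\flat))^{H_{K_n}^2}$, the map $\iota_n^{(1)}$ (after perfection) is the restriction of functions from $\widehat{G}$ to $\widehat{G}_n$. This is done by checking that the two compositions with $j_1, j_2$ agree, using $\iota_n^{(1)} \circ p_i = p_i \circ \iota_n$ from (\ref{diag-pushout-imperf}) and the universal property of coproducts of perfectoid algebras (Lem. \ref{coprod-perf}). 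It then follows that $e_\gamma \circ \iota_n^{(1)} = e_\gamma$ for $\gamma \in \widehat{G}_n$, so $(f_n)_\gamma = f_\gamma$ for all such $\gamma$. By Thm. \ref{dic}(2),(3), the $\widehat{G}_n$-action on $\widehat{\cM}_{K_n}$ is therefore the restriction of the $\widehat{G}$-action on $\widehat{\cM}$. Taking $\varphi$-invariants after tensoring with $W(\C_p^\flat)$ gives $(T_n')^\vee \cong T^\vee|_{\Gal_{K_n}}$ as $\Gal_{K_n}$-modules, since $\Gal_{K_n}$ is generated by $\Gal_{K_\infty}$ and lifts of $\widehat{G}_n$.

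The main obstacle is the second step: identifying the perfection of $\iota_n^{(1)}$ with restriction of continuous functions. The perfectoid coproduct $K_\infty^{(1)}$ computed over $K$ differs from the one computed over $K_n$, because the base changes from $K$ to $K_n$. I would handle this by realizing the $K_n$-coproduct as a quotient of the $K$-coproduct, namely $\text{Cont}(\widehat{G}, \widehat{F}) \to \text{Cont}(\widehat{G}_n, \widehat{F})$, using Thm. 5.6 of \cite{Wu} on both sides, and then checking that this quotient map is the one induced by the universal property.
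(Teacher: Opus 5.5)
Your proposal is correct and is essentially the paper's argument: identify $\cM_{K_n}$ with $\cM$ over $W(\widehat{K}_\infty^\flat)$ (since $\iota_n$ perfects to the identity), show that $\iota_n^{(1)}$ becomes the restriction map $\text{Cont}(\widehat{G}, W(\widehat{F}^\flat))^{H_K^2} \rightarrow \text{Cont}(\widehat{G}_n, W(\widehat{F}^\flat))^{H_{K_n}^2}$, and conclude $(f_n)_\gamma = f_\gamma$ for all $\gamma \in \widehat{G}_n$ by evaluating at $e_\gamma$. Your justification of the restriction identification, via the universal property of the perfectoid coproducts and the observation that the $K_n$-coproduct is a quotient of the $K$-coproduct, fills in a step the paper only asserts.
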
 
 
 \begin{proof} 
 Write $\widehat{G_n}:=\Gal(F/K_n)$, which is generated by $\tau^{p^n}$ and $H_{K_n}:=\Gal(F/K_{\infty}) \cong \Gal((K_n)_{p^{\infty}}/ K_n)$ (so, $H_{K_n}=H_K$). Note that the étale \textit{$(\varphi, \tau)$-module} corresponding to the $\Z_p$-representation $T|_{\Gal_{K_n}}$ of $\Gal_{K_n}$ is given by $(\cM, \varphi_{\cM}, \widehat{G_n})$. The scalar extension of $f$ along $\iota_n^{(1)} : \fS_{K}^{(1)}[E^{-1}]^\wedge_p \rightarrow \fS_{K_n}^{(1)}[E_n^{-1}]^\wedge_p$ gives the isomorphism  
$$ f_n : \fS_{K_n}^{(1)}[E_n^{-1}]^\wedge_p \otimes_{p_1, \fS_{K_n}[E_n^{-1}]^\wedge_p} \cM_{K_n} \longrightarrow \fS_{K_n}^{(1)}[E_n^{-1}]^\wedge_p \otimes_{p_2, \fS_{K_n}[E_n^{-1}]^\wedge_p} \cM_{K_n}.$$ Let $\widehat{F}_n^{(1)}$ and $\widehat{K}_{n, \infty}^{(1)}$ be the self-products of $\widehat{F}$ and $\widehat{K}_{\infty}$ respectively, in the category of perfectoid $K_n$-algebras by Lem. \ref{coprod-perf}. Then by Lem. 5.3 and Thm. 5.6 of \cite{Wu}, we have $W((\widehat{F}_n^{(1)})^\flat) \cong \text{Cont}\ (\widehat{G}_n, W(\widehat{F}^\flat))$ and $W((\widehat{K}_{n, \infty}^{(1)})^\flat) = \text{Cont}\ (\widehat{G}_n, W(\widehat{F}^\flat))^{H_{K_n}^2}$; where $\text{Cont}$ denotes the ring of all continuous functions. Similar to the discussion given in Subsec. \ref{cat-eq}, for any $\gamma \in \widehat{G_n}$, the isomorphism $f_n$ gives rise to an isomorphism $$(f_n)_{\gamma} : W(\widehat{F}^\flat) \otimes_{W(\widehat{K}_{\infty}^\flat), \gamma} \cM_{K_n} \rightarrow W(\widehat{F}^\flat) \otimes_{W(\widehat{K}_{\infty}^\flat)} \cM_{K_n}$$ and hence a $W(\widehat{F}^\flat)$-semilinear action of $\gamma$ on $(\cM_{K_n} \otimes_{W((\widehat{K}_{\infty})^\flat)} W(\widehat{F}^\flat))$. But by definition, $\cM_{K_n}$ is isomorphic to $\cM$ as $\fS_{K}[E^{-1}]^\wedge_p$-module via scalar extension along $\iota_n : \fS_K[E^{-1}]^\wedge_p \rightarrow \fS_{K_n}[E_n^{-1}]^\wedge_p$. In particular, they are isomorphic as $W(\widehat{K}_{\infty}^\flat)$-modules. Therefore, it suffices to see the following diagram commutes for all $\gamma \in \widehat{G}_n$:
 \begin{equation}\label{diag-pullback-perf}
\begin{tikzcd}
W(\widehat{F}^\flat) \otimes_{W(\widehat{K}_{\infty}^\flat), \gamma} \cM \arrow[r, " f_{\gamma}"] \arrow[d] & W(\widehat{F}^\flat) \otimes_{W(\widehat{K}_{\infty}^\flat)} \cM  \arrow[d] \\
W(\widehat{F}^\flat) \otimes_{W(\widehat{K}_{\infty}^\flat), \gamma} \cM_{K_n} \arrow[r, "(f_n)_{\gamma}"]           & W(\widehat{F}^\flat) \otimes_{W(\widehat{K}_{\infty}^\flat)} \cM_{K_n}          
\end{tikzcd}
\end{equation}    

\noindent The map $\iota_n^{(1)} : \fS_K^{(1)}[E^{-1}]^\wedge_p \rightarrow \fS_{K_n}^{(1)}[E_n^{-1}]^\wedge_p$ induces $\iota_n^{(1)} : W((K^{(1)}_{\infty})^\flat) \rightarrow W((K^{(1)}_{\infty,n})^\flat)$. So, we have $f_n = \mathrm{id}_{W((K^{(1)}_{\infty,n})^\flat)} \otimes_{\iota_n^{(1)}, W((K^{(1)}_{\infty})^\flat)} f$. Now, $e_{\gamma}$ is compatible with $\iota_n^{(1)}$. More precisely, through the isomorphisms $W((K^{(1)}_{\infty})^\flat) \cong \text{Cont}\ (\widehat{G}, W(\widehat{F}^\flat))^{H_K^2}$ and $W((K^{(1)}_{\infty,n})^\flat) \cong \text{Cont}\ (\widehat{G_n}, W(\widehat{F}^\flat))^{H_{K_n}^2}$, the map $\iota_n^{(1)} : W((K^{(1)}_{\infty})^\flat) \rightarrow W((K^{(1)}_{\infty,n})^\flat)$ translates simply to the restriction map : $\text{Cont}\ (\widehat{G}, W(\widehat{F}^\flat))^{H_K^2} \rightarrow \text{Cont}\ (\widehat{G_n}, W(\widehat{F}^\flat))^{H_{K_n}^2}$. Therefore, evaluating at $e_{\gamma}$, we have the commutative diagram. Hence the proof follows.
 
 \end{proof}

\begin{cor}\label{res-perf-fin}
Assume that $T$ is a finite $E(u)$-height $\Z_p$-representation of $\Gal_K$ and $(\fM_K, \varphi_{\fM_K}, f)$ be the corresponding object in $\textbf{DD}^{\text{fin}}_{\cO_{\cE}}$. Then, the object in $\textbf{DD}^{\text{fin}}_{\cO_{\cE}}$ associated to $T|_{\Gal_{K_n}}$ is given by the pullback $(\fM_{K_n}, \varphi_{\fM_{K_n}}, f_n)$ of $(\fM_K, \varphi_{\fM_K}, f)$ along $\iota_n^{(1)} : \fS_K^{(1)}[E^{-1}]^\wedge_p \rightarrow \fS_{K_n}^{(1)}[E_n^{-1}]^\wedge_p$. 
\end{cor}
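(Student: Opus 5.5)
The corollary should follow by combining Prop. \ref{res-perf} with the compatibility between pulling back étale $\varphi$-modules and pulling back Breuil-Kisin modules along $\iota_n$. The plan has three steps.

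\textbf{Step 1 (the étale descent datum of $T|_{\Gal_{K_n}}$).} Since $T$ has finite $E(u)$-height, its associated object of $\textbf{DD}_{\cO_{\cE}}$ is $(\cM, \varphi_{\cM}, f)$ with $\cM = \cO_{\cE} \otimes_{\fS_K} \fM_K$. Here $\varphi_{\cM}$ is the scalar extension of $\varphi_{\fM_K}$, and $f$ is the descent isomorphism over $\fS_K^{(1)}[E^{-1}]^\wedge_p$. Applying Prop. \ref{res-perf}, the object of $\textbf{DD}_{\cO_{\cE}}$ attached to $T|_{\Gal_{K_n}}$ is $(\cM_{K_n}, \varphi_{\cM_{K_n}}, f_n)$. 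Here $f_n$ is the scalar extension of $f$ along $\iota_n^{(1)}$, rewritten using the relations $\iota_n^{(1)} \circ p_i = p_i \circ \iota_n$ coming from the pushout diagram (\ref{pushout-sec-2-2}), as in (\ref{nth-pullback}).

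\textbf{Step 2 (the underlying module comes from a Breuil-Kisin module).} By the computation at the end of the previous subsection, transported to $K = L_g$, we have $\cM_{K_n} = \fS_{K_n}[E_n^{-1}]^\wedge_p \otimes_{\fS_{K_n}} \fM_{K_n}$, where $\fM_{K_n} = \fS_{K_n} \otimes_{\iota_n, \fS_K} \fM_K$. Under this identification, $\varphi_{\cM_{K_n}} = \varphi \otimes \varphi_{\cM}$ corresponds to the scalar extension of $\varphi_{\fM_{K_n}} = \varphi \otimes \varphi_{\fM_K}$. I will check this on pure tensors $s \otimes (t \otimes m)$, using $\varphi \circ \iota_n = \iota_n \circ \varphi$. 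The preceding proposition shows that $\fM_{K_n}$ is a finite free Breuil-Kisin module of $E_n$-height $\le r$; its proof only uses freeness and the relation $\iota_n(E) = E_n$, so it applies verbatim for $K = L_g$. Consequently $(\cM_{K_n}, \varphi_{\cM_{K_n}}, f_n)$ lies in $\textbf{DD}^{\text{fin}}_{\cO_{\cE}}$ for $K_n$, and its finite-height presentation is exactly $(\fM_{K_n}, \varphi_{\fM_{K_n}}, f_n)$.

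\textbf{Step 3 (conclusion).} The equivalence $\text{Rep}^{\text{fin}}_{\Z_p}(\Gal_{K_n}) \simeq \textbf{DD}^{\text{fin}}_{\cO_{\cE}}$ from diagram (\ref{diag-fin-descentdata}) is the restriction of the equivalence with $\textbf{DD}_{\cO_{\cE}}$. Together with Steps 1 and 2, this identifies the object associated to $T|_{\Gal_{K_n}}$ with $(\fM_{K_n}, \varphi_{\fM_{K_n}}, f_n)$. In particular, $T|_{\Gal_{K_n}}$ is of finite $E_n$-height.

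The only point requiring care is the bookkeeping in Step 2. One must see that two isomorphisms of $\fS_{K_n}^{(1)}[E_n^{-1}]^\wedge_p$-modules agree: the one obtained by tensoring the $\fS_K^{(1)}[E^{-1}]^\wedge_p$-linear map $f$ along $\iota_n^{(1)}$, and $f_n$ viewed as a map out of $\fS_{K_n}^{(1)}[E_n^{-1}]^\wedge_p \otimes_{p_i, \fS_{K_n}} \fM_{K_n}$. Since both sides are iterated base changes along commuting squares of ring maps, this is associativity of tensor products, and I expect no real obstacle.
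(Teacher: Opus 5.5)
Your proposal is correct and follows essentially the same route as the paper. You apply Prop.~\ref{res-perf} to obtain $(\cM_{K_n}, \varphi_{\cM_{K_n}}, f_n)$, then use $\cM_{K_n} = \fS_{K_n}[E_n^{-1}]^\wedge_p \otimes_{\fS_{K_n}} \fM_{K_n}$ with $\fM_{K_n} = \fS_{K_n} \otimes_{\iota_n, \fS_K} \fM_K$ to rewrite $f_n$ as a finite-height descent datum; you are somewhat more explicit than the paper in checking Frobenius compatibility and that $\fM_{K_n}$ has finite $E_n$-height, points the paper leaves implicit.
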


\begin{proof}
This is direct consequence of Prop. \ref{res-perf}. Note that the underlying étale $\varphi$-module $(\cM_K, \varphi_{\cM_K})$ can be written as $(\fS_K[E^{-1}]^\wedge_p \otimes _{\fS_K} \fM_K, \varphi \otimes \varphi_{\fM})$. So, $\cM_{K_n} = \fS_{K_n}[E_n^{-1}]^\wedge_p \otimes_{\fS_{K_n}} (\fS_{K_n} \otimes_{\iota_n, \fS_K} \fM_K)$. We have  $\fM_{K_n} = \fS_{K_n} \otimes_{\iota_n, \fS_K} \fM_K$. Therefore, the descent datum $f_n$ attached to finite height representation $T|_{\Gal_{K_n}}$ takes the form :
$$ f_n : \fS_{K_n}^{(1)}[E_n^{-1}]^\wedge_p \otimes_{p_1, \fS_{K_n}} \fM_{K_n} \rightarrow \fS_{K_n}^{(1)}[E_n^{-1}]^\wedge_p \otimes_{p_2, \fS_{K_n}} \fM_{K_n};$$
which is compatible with Frobenii and satisfies cocycle conditions over $\fS_{K_n}^{(2)}[E_n^{-1}]^\wedge_p$.
\end{proof}

\vspace{15pt}

\subsubsection{Imperfect residue field case:}\label{imperf-res-dic}

Let us now consider the case for $\Z_p$-representation of $\Gal_L$. Recall that $\fS_L=\cO_{L_0}[[u]]$ and $\cO_{\cE, L}=\fS_L[E^{-1}]^\wedge_p$. Recall also the continuous map $\Gal_{L_g} \rightarrow \Gal_L$, where, $L_g$ is the CDVF with perfect residue field $k_g$ (c.f. Subsec. \ref{sec-2-1}). Let $T$ be a $\Z_p$-representation of $\Gal_L$ and $(\cM_L, \varphi_{\cM_L}, f)$ be the associated descent datum. Extend the scalar along the map $i_g^{(1)} : \fS_L^{(1)}[E^{-1}]^\wedge_p \rightarrow \fS_{L_g}^{(1)}[E^{-1}]^\wedge_p$ and get

\begin{multline*}
f_g : \fS_{L_g}^{(1)}[E^{-1}]^\wedge_p \otimes_{i_g^{(1)}, \fS_L^{(1)}[E^{-1}]^\wedge_p} (\fS_L^{(1)}[E^{-1}]^\wedge_p \otimes_{p_1, \fS_L[E^{-1}]^\wedge_p} \cM_L\bigr)\\
 \longrightarrow\; 
\fS_{L_g}^{(1)}[E^{-1}]^\wedge_p \otimes_{i_g^{(1)}, \fS_L^{(1)}[E^{-1}]^\wedge_p} (\fS_L^{(1)}[E^{-1}]^\wedge_p \otimes_{p_2, \fS_L[E^{-1}]^\wedge_p} \cM_L\bigr).
 \end{multline*}
From the commutativity of the universal diagram, we have,
\begin{multline*}
f_g : \fS_{L_g}^{(1)}[E^{-1}]^\wedge_p  \otimes_{p_1, \fS_{L_g}[E^{-1}]^\wedge_p} (\fS_{L_g}[E^{-1}]^\wedge_p \otimes_{\fS_L[E^{-1}]^\wedge_p} \cM_L\bigr)\\ \longrightarrow\;
\fS_{L_g}^{(1)}[E^{-1}]^\wedge_p  \otimes_{p_2, \fS_{L_g}[E^{-1}]^\wedge_p} (\fS_{L_g}[E^{-1}]^\wedge_p \otimes_{\fS_L[E^{-1}]^\wedge_p} \cM_L\bigr)
 \end{multline*}
which equals 
$$ f_g : \fS_{L_g}^{(1)}[E^{-1}]^\wedge_p  \otimes_{p_1, \fS_{L_g}[E^{-1}]^\wedge_p}  \cM_{L_g} \rightarrow \fS_{L_g}^{(1)}[E^{-1}]^\wedge_p  \otimes_{p_2, \fS_{L_g}[E^{-1}]^\wedge_p}  \cM_{L_g}.$$
Therefore, we obtain a descend datum $(\cM_{L_g}, \varphi_{\cM_{L_g}}, f_g)$. By the following lemma, it corresponds to the restriction $T|_{\Gal_{L_g}}$ via the continuous map $\Gal_{L_g} \rightarrow \Gal_L$.

\begin{lem}\label{func-imperf}
The descent data associated to the restriction $T|_{\Gal_{L_g}}$ via the continuous map $\Gal_{L_g} \rightarrow \Gal_L$ is given by $(\cM_{g}, \varphi_{\cM_{g}}, f_g)$; where $f_g$ is given as above.
\end{lem}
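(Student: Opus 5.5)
Since the descent data in $\textbf{DD}_{\cO_{\cE,L}}$ is the evaluation of a Laurent $F$-crystal on the Breuil--Kisin log prism and its self-coproduct (Prop. \ref{DD-OE}), I would argue entirely through crystals. The plan is to show that the pullback of the crystal along the morphism of sites induced by $\Spf(\cO_{L_g})\to\Spf(\cO_L)$ corresponds, under the étale realization, to restriction along $\Gal_{L_g}\to\Gal_L$. I would then identify its evaluation on $(\fS_{L_g},E,\N)^a$ and $(\fS^{(1)}_{L_g},E,\N^2)^a$ with $(\cM_{L_g},\varphi_{\cM_{L_g}},f_g)$.

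\textbf{Step 1: the evaluation equals the scalar extension.} Let $\cE$ be the Laurent $F$-crystal on $(Y,M_Y)_{\Prism}$ attached to $T$. The map $\cO_L\to\cO_{L_g}$ (sending $X_i\mapsto[X_i^\flat]$) makes every log prism over $\cO_{L_g}$ a log prism over $\cO_L$. Hence $\iota_g:(\fS_L,E,\N)^a\to(\fS_{L_g},E,\N)^a$ and $\iota_g^{(1)}$ from diagram (\ref{pushout-sec-2-3}) are morphisms in $(Y,M_Y)_{\Prism}^{\mathrm{opp}}$. By the crystal property, $\cE(\fS_{L_g})=\fS_{L_g}[E^{-1}]^\wedge_p\otimes_{\fS_L[E^{-1}]^\wedge_p}\cM_L=\cM_{L_g}$. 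Likewise, the value on $\fS^{(1)}_{L_g}$ is the base change along $\iota_g^{(1)}$ of the value on $\fS^{(1)}_L$. Since $\iota_g^{(1)}\circ p_i=p_i\circ\iota_g$, the transition isomorphism over $\fS^{(1)}_{L_g}[E^{-1}]^\wedge_p$ is exactly $f_g$. So the triple $(\cM_{L_g},\varphi_{\cM_{L_g}},f_g)$ is the descent datum of the restricted crystal $\cE|_{(\Spf\cO_{L_g},\N)_{\Prism}}$, computed via the Breuil--Kisin prism over $\cO_{L_g}$. Frobenius compatibility and the cocycle condition follow from the same functoriality over $\fS^{(2)}$.

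\textbf{Step 2: the restricted crystal has the restricted representation.} Under the equivalence of Prop. \ref{DD-OE} and Thm. 3.14 of \cite{DLMSII}, $T^\vee=\cE(A_{\inf}(\overline{\cO_L}),E,\N)^{\varphi=1}$ as a $\Gal_L$-module. Fix the embedding $\overline{L}\hookrightarrow\overline{L_g}$ used to define $\Gal_{L_g}\to\Gal_L$. It induces a $\Gal_{L_g}$-equivariant map of log prisms $A_{\inf}(\overline{\cO_L})\to A_{\inf}(\overline{\cO_{L_g}})$ over $\cO_L$, compatible with $\fS_L\to\fS_{L_g}$ because $X_i^\flat\mapsto X_i^\flat$ and $\pi^\flat\mapsto\pi^\flat$. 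The crystal property then gives $\cE(A_{\inf}(\overline{\cO_{L_g}}))=W(\C_p^\flat(\overline{L_g}))\otimes\cE(A_{\inf}(\overline{\cO_L}))$. Taking $\varphi$-invariants identifies the $\Z_p$-module of the restricted crystal with $T^\vee$, with $\Gal_{L_g}$ acting through $\Gal_{L_g}\to\Gal_L$. Invariants commute here by the standard fact that both sides are $T^\vee\otimes W(\cdot)$ and $W(\cdot)^{\varphi=1}=\Z_p$.

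\textbf{Step 3: conclude.} Comparing Steps 1 and 2 via the equivalence $\textbf{DD}_{\cO_{\cE,L_g}}\simeq\textbf{Rep}_{\Z_p}(\Gal_{L_g})$ of Thm. 5.0.14 of \cite{DL} gives the lemma. As an alternative check one could use Thm. \ref{dic}, which relates $f_g$ to the $\tau$-action. For that one compares the evaluation maps $e_\gamma$ on $W((L_{g,\infty}^{(1)})^\flat)$ with the Galois action on $T^\vee\otimes W(\C_p^\flat)$, as in Prop. \ref{res-perf}.

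The main obstacle is Step 2. One must check that the $A_{\inf}$-evaluation description of the equivalence is compatible with changing the base formal scheme from $\cO_L$ to $\cO_{L_g}$. The map $\cO_L\to\cO_{L_g}$ is not finite, but it is a map of log formal schemes, so pullback of Laurent $F$-crystals is defined. The real point is matching the two étale-comparison isomorphisms, which I would do by functoriality of the construction in \cite{DLMSII}, Thm. 3.14, with respect to maps of base log formal schemes.
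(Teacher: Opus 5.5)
Your proposal is correct and follows essentially the same route as the paper. The paper's proof consists only of invoking functoriality of Laurent $F$-crystals (and hence of their descent data) along the base change $\cO_{L_0}\to\cO_{L_{0,g}}$, citing the proof of Prop.~3.27(ii)(c) of \cite{DLMS}. Your Steps 1--2 spell out that functoriality via the Breuil--Kisin prism, its self-coproduct and the $A_{\text{inf}}$-evaluation, and the compatibility of étale realizations that you flag as the main obstacle is precisely what the paper outsources to that reference.
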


\begin{proof}
This follows from the proof of part (c) of Prop. 3.27 (ii) from \cite{DLMS}. Basically, this is functoriality of Laurent $F$-crystals (and hence its descent datum) along the base change $\cO_{L_0} \rightarrow \cO_{L_{0,g}}$. 
\end{proof}

\noindent We have the following analogue of Prop. \ref{res-perf} in the imperfect residue field case. Recall that $L_n =L(\pi_n)$ and $L_g(\pi_n) = L_{n,g}$. 

\begin{prop}\label{res-imperf}
Let $T$ be a $\Z_p$-representation of $\Gal_L$ and $(\cM_L, \varphi_{\cM_L}, f)$ be the associated descent data in $\textbf{DD}_{\cO_{\cE,L}}$. Then, the descent data associated to the restriction $T|_{\Gal_{L_n}}$ is given by $(\cM_{L_n}, \varphi_{\cM_{L_n}}, f_n)$.
\end{prop}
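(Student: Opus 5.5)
The plan is to reduce to the perfect residue field case, Prop. \ref{res-perf}, using the base change to $L_g$ and the faithfulness of that base change on descent data. Let $(\cM', \varphi_{\cM'}, f')$ denote the object of $\textbf{DD}_{\cO_{\cE,L_n}}$ attached to $T|_{\Gal_{L_n}}$ under the equivalence of Prop. \ref{DD-OE}/Thm. 5.0.14-type results for $L_n$. We must produce an isomorphism $(\cM', \varphi_{\cM'}, f') \cong (\cM_{L_n}, \varphi_{\cM_{L_n}}, f_n)$ in $\textbf{DD}_{\cO_{\cE,L_n}}$.

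\emph{Step 1: the underlying étale $\varphi$-modules agree.} Since $L_n \subset L_\infty$, we have $\Gal_{L_{n,\infty}} = \Gal_{L_\infty}$ (the Kummer towers of $L$ and $L_n$ coincide). So $\cM'$ is the étale $\varphi$-module of $T|_{\Gal_{L_\infty}}$ via Thm. \ref{Kim}. By the discussion after Prop. \ref{extn-etalephi}, $\cM_{L_n}$ corresponds under Thm. \ref{Kim} to a representation isomorphic to $T|_{\Gal_{L_\infty}}$. Hence there is an isomorphism $\cM' \cong \cM_{L_n}$ of étale $\varphi$-modules. The care needed here is to fix the identification compatibly with the embedding $\fS_{L_n} \hookrightarrow W(\overline{\cO_L}^\flat)$ sending $u\mapsto[\pi_n^\flat]$, which is the one for which $\iota_n$ becomes the identity on $\cO_{\cE,L}$ inside $W(\overline{\cO_L}^\flat[(\pi^\flat)^{-1}])$.

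\emph{Step 2: compare the descent isomorphisms after base change to $L_{n,g}$.} Using the commutative square (\ref{square1-sec-2-3}), $\iota_{n,g}^{(1)}\circ\iota_n^{(1)}=\iota_{g,n}^{(1)}\circ\iota_g^{(1)}$. Pulling back $f$ along the two paths therefore gives the same descent datum over $\fS^{(1)}_{L_{n,g}}[E_n^{-1}]^\wedge_p$. Along the lower path we proceed in two stages. First, Lem. \ref{func-imperf} shows that $\iota_g^{(1),\star}f = f_g$ is the datum of $T|_{\Gal_{L_g}}$. Second, Prop. \ref{res-perf} applied with $K=L_g$ shows that its $\iota_n$-pullback is the datum of $T|_{\Gal_{L_{n,g}}}$. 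Along the upper path, $\iota_{n,g}^{(1),\star}f_n$ is compared with $\iota_{n,g}^{(1),\star}f'$, which by Lem. \ref{func-imperf} (applied to $L_n$ and the map $\Gal_{L_{n,g}}\to\Gal_{L_n}$) is also the datum of $T|_{\Gal_{L_{n,g}}}$. Since $\Gal_{L_{n,g}}\to\Gal_{L_n}$ factors compatibly through $\Gal_{L_g}\to \Gal_L$, both restrictions are the same representation. Thus the pullbacks of $f_n$ and $f'$ to $L_{n,g}$ coincide, once we use the identification from Step 1 and check, via the explicit formulas, that it base changes correctly.

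\emph{Step 3: descend the equality.} By Prop. 4.2.5 of \cite{Gao-imp}, base change $\text{Mod}^\varphi_{\cO_{\cE,L_n}}\to\text{Mod}^\varphi_{\cO_{\cE,L_{n,g}}}$ is an equivalence. One then needs $\fS^{(1)}_{L_n}[E_n^{-1}]^\wedge_p\to\fS^{(1)}_{L_{n,g}}[E_n^{-1}]^\wedge_p$ to be injective, or at least injective on the finite projective modules involved (for instance via faithful flatness, as for $p_1,p_2$). Granting this, two descent isomorphisms $f_n,f'$ on the same module that agree after base change must be equal. This gives $(\cM',f')\cong(\cM_{L_n},f_n)$.

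I expect Step 3, the injectivity of $\iota_{n,g}^{(1)}$ on the Laurent coproducts, to be the main obstacle, because $\fS^{(1)}_{L_n}$ has extra variables $1-X_{i,2}/X_i$ that are killed in $\fS^{(1)}_{L_{n,g}}$. I would therefore argue in the other direction: work inside $W((L_{\infty}^{(1)})^\flat)$-type perfectoid coproducts, as in the proof of Prop. \ref{res-perf}, and identify the descent data with the Galois action of $\Gal(\widehat F/L_n)$-type groups via evaluation maps $e_\gamma$. Restriction to $\Gal_{L_n}$ then visibly corresponds to restricting the function algebra $\text{Cont}(\widehat G,\cdot)$ to the subgroup. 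This mirrors the perfect case directly and avoids the injectivity issue.
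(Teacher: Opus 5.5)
Your Steps 1--3 are the paper's proof. It introduces the datum $(\cN_{L_n},h_n)$ of $T|_{\Gal_{L_n}}$ and identifies $\cN_{L_n}\cong\cM_{L_n}$, via \cite[Prop.~4.2.5]{Gao-imp} after passing to $L_{n,g}$ rather than via Thm.~\ref{Kim}. It then observes that $(h_n)_g$ and $(f_g)_n=\iota^{(1)\star}_{n,g}f_n$ both realize $T|_{\Gal_{L_{n,g}}}$. Finally it concludes $h_n=f_n$ by calling $\iota^{(1)}_{n,g}$ ``$p$-completely faithfully flat''.

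Your suspicion about that last step is justified, and it is a genuine gap in this route. The map sends $X_{i,2}$ and $X_i$ to the same element; the paper itself records $\iota^{(1)}_g(X_{i,2})=\iota^{(1)}_g(X_i)$ before Lem.~\ref{intersection-equality}. So it is not injective on the Laurent coproducts, and ``equal after base change $\Rightarrow$ equal'' really fails. For instance, suppose $\zeta_p\in L$, and let $P=\Z_p[\Gal(L(X_1^{1/p})/L)]$ with the regular action of $\Gal_L$. Both $P$ and $\Z_p^{p}$ are trivial on $\Gal_{L_\infty}$. They are also trivial on $\Gal_{L_g}$, because $X_1\mapsto[\bar X_1]$ is a $p$-th power in $W(k_g)$. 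Writing both étale $\varphi$-modules as $\cO_{\cE,L}^{p}$ via $\Z_p$-bases, the descent isomorphisms $f_P\neq\mathrm{id}$ both become the identity after $\iota^{(1)}_g$, by Lem.~\ref{func-imperf} and Thm.~\ref{dic}(4) over $L_g$. The same example over $L_n$ kills injectivity of $\iota^{(1)}_{n,g}$. So the $L_g$ detour cannot prove the proposition, and you were right to drop Step 3.

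Your fallback is the correct route, and it genuinely differs from the paper because it never leaves $L$ and $L_n$. It can be made short and needs no injectivity.

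\begin{itemize}
\item For $\gamma\in\Gal_{L_n}$, let $e_\gamma:\fS^{(1)}_L\to A_{\text{inf}}(\overline{\cO_L})$ be induced by the pair $(\gamma\circ\mathrm{can},\mathrm{can})$, and $e^{(n)}_\gamma:\fS^{(1)}_{L_n}\to A_{\text{inf}}(\overline{\cO_L})$ by $(\gamma\circ\mathrm{can}_n,\mathrm{can}_n)$. Here $\mathrm{can}(u)=[\pi^\flat]$, $\mathrm{can}_n(u)=[\pi_n^\flat]$, and $X_i\mapsto[X_i^\flat]$ in both.
\item Since $\mathrm{can}_n\circ\iota_n=\mathrm{can}$ (as $[\pi_n^\flat]^{p^n}=[\pi^\flat]$), the universal property of $\fS^{(1)}_L$ gives $e^{(n)}_\gamma\circ\iota^{(1)}_n=e_\gamma$. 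Hence $(f_n)_\gamma=f_\gamma$ for every $\gamma\in\Gal_{L_n}$.
\item The equivalence $\textbf{DD}_{\cO_{\cE,L}}\simeq\text{Rep}_{\Z_p}(\Gal_L)$ is, by construction, evaluation of the Laurent $F$-crystal at $A_{\text{inf}}(\overline{\cO_L})$, with $\gamma$ acting through these $f_\gamma$. So the representation attached to $(\cM_{L_n},\varphi_{\cM_{L_n}},f_n)$ is $T|_{\Gal_{L_n}}$.
\item Full faithfulness of the equivalence for $L_n$ then gives $(\cM_{L_n},f_n)\cong(\cM',f')$.
\end{itemize}

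Two cautions on how you finish. First, conclude through the equivalence rather than deducing $f'=f_n$ from $f'_\gamma=(f_n)_\gamma$. The latter needs an imperfect-residue-field analogue of Thm.~\ref{dic}(4), which the paper does not supply. Second, you do not need the $\text{Cont}(\widehat G,W(\widehat F^\flat))^{H^2}$ description of the perfectoid coproduct over $L$. That description would have to be re-proved, since $\widehat G$ now also contains the Kummer directions of the $X_i$.

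In crystal language, this is just functoriality of Laurent $F$-crystals along $\mathrm{Spf}(\cO_{L_n})\to\mathrm{Spf}(\cO_L)$, with $\iota_n$ and $\iota^{(1)}_n$ as morphisms in $(Y,M_Y)_{\Prism}$. It is the same principle the paper already invokes for Lem.~\ref{func-imperf} and in Subsec.~\ref{sec-5-2}.
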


\begin{proof} Note that $\fS_{L_{n,g}}= \fS_{L_g}$ as $L_{n,g}$ is totally ramified extension of $L_g$. Let us denote the étale $\varphi$-module with descend data associated to $\Z_p$-representation $T|_{\Gal_{L_n}}$ by $(\cN_{L_n}, \varphi_{\cN_{L_n}}, h_n)$. We write the descent isomorphism by  
$$ h_n : \fS_{L_n}^{(1)}[E_n^{-1}]^\wedge_p \otimes_{p_1, \fS_{L_n}[E^{-1}_n]^\wedge_p} \cN_{L_n} \rightarrow \fS_{L_n}^{(1)}[E_n^{-1}]^\wedge_p \otimes_{p_2, \fS_{L_n}[E^{-1}_n]^\wedge_p} \cN_{L_n}$$
compatible with Frobenii and satisfies the cocycle conditions over $\fS_{L_n}^{(2)}[E_n^{-1}]^\wedge_p$. The descent isomorphism attached to $T$ is given by
$$f : \fS_L^{(1)}[E^{-1}]^\wedge_p \otimes_{p_1, \fS_L[E^{-1}]^\wedge_p} \cM_L \rightarrow \fS_L^{(1)}[E^{-1}]^\wedge_p \otimes_{p_2, \fS_L[E^{-1}]^\wedge_p} \cM_L$$
By Lem. \ref{func-imperf}, the descent datum attached to $T|_{\Gal_{L_g}}$ is given by $(\cM_{L_g}, \varphi_{\cM_{L_g}}, f_g)$, where $\cM_{L_g}= \fS_{L_g}[E^{-1}]^\wedge_p \otimes_{i_g, \fS_L[E^{-1}]^\wedge_p} \cM_L$ and $f_g$ equals $$ f_g : \fS_{L_g}^{(1)}[E^{-1}]^\wedge_p \otimes_{p_1, \fS_{L_g}[E^{-1}]^\wedge_p} \cM_{L_g} \rightarrow \fS_{L_g}^{(1)}[E^{-1}]^\wedge_p \otimes_{p_2, \fS_{L_g}[E^{-1}]^\wedge_p} \cM_{L_g}.$$
Now, by Prop. \ref{res-perf}, extention of scalar along $i_{g,n}^{(1)} : \fS_{L_g}^{(1)}[E^{-1}]^\wedge_p \rightarrow \fS_{L_{n,g}}^{(1)}[E_n^{-1}]^\wedge_p$ gives the descent data attached to $T|_{\Gal_{(L_n)_g}}$:
\begin{multline*}
(f_g)_n : \fS_{L_{n,g}}^{(1)}[E_n^{-1}]^\wedge_p \otimes_{p_1, \fS_{L_{n,g}}[E_n^{-1}]^\wedge_p} (\fS_{L_{n,g}}[E_n^{-1}]^\wedge_p \otimes_{\iota_n, \fS_{L_g}[E^{-1}]^\wedge_p} \cM_{L_g} \bigr)\\\longrightarrow\;
 \fS_{L_{n,g}}^{(1)}[E_n^{-1}]^\wedge_p \otimes_{p_2, \fS_{L_{n,g}}[E_n^{-1}]^\wedge_p} (\fS_{L_{n,g}}[E_n^{-1}]^\wedge_p \otimes_{\iota_n, \fS_{L_g}[E^{-1}]^\wedge_p} \cM_{L_g} \bigr),
\end{multline*}
where $(\fS_{L_{n,g}}^{(1)}, E_n, \N^2)^a$ is the absolute self-coproduct of the prism $(\fS_{L_{n,g}}, E_n, \N)^a$ as in Subsec. \ref{sec-2-3}. By Lem. \ref{func-imperf}, the étale $\varphi$-module attached to $T|_{\Gal_{(L_n)_g}}$ is $\cN_{L_n} \otimes_{\iota_g, \fS_{L_n}[E_n^{-1}]^\wedge_p} \fS_{L_{n,g}}[E_n^{-1}]^\wedge_p$. Note that $L_{n,g}=(L_n)_g$. So, we have
\begin{align*}
\fS_{L_{n,g}}[E_n^{-1}]^\wedge_p \otimes_{\iota_g, \fS_{L_n}[E_n^{-1}]^\wedge_p} \cN_{L_n}
& \cong  \fS_{L_{n,g}}[E_n^{-1}]^\wedge_p \otimes_{\iota_n, \fS_{L_g}[E^{-1}]^\wedge_p} \cM_{L_g} \\
& \cong \fS_{L_{n,g}}[E_n^{-1}]^\wedge_p \otimes_{\iota_n, \fS_{L_g}[E^{-1}]^\wedge_p} (\fS_{L_g}[E^{-1}]^\wedge_p \otimes_{\iota_g, \fS_L[E^{-1}]^\wedge_p} \cM_L)
\end{align*}
 Commutativity of $\iota_g$ and $\iota_n$ implies that $$\fS_{L_{n,g}}[E_n^{-1}]^\wedge_p \otimes_{\iota_g, \fS_{L_n}[E_n^{-1}]^\wedge_p} \cN_{L_n} \cong \fS_{L_{n,g}}[E_n^{-1}]^\wedge_p \otimes_{\iota_g, \fS_{L_n}[E_n^{-1}]^\wedge_p} (\fS_{L_n}[E_n^{-1}]^\wedge_p \otimes_{\iota_n, \fS_L[E^{-1}]^\wedge_p} \cM_L).$$ Prop. 4.2.5 in \cite{Gao-imp} implies that $\cN_{L_n} \cong \fS_{L_n}[E_n^{-1}]^\wedge_p \otimes_{\iota_n, \fS_L[E^{-1}]^\wedge_p} \cM_L$ in the category $Mod^{\varphi}_{\cO_{\cE,L}}$.\smallskip

\noindent Let $(h_n)_g$ be the scalar extension of $h_n$ along the map $\iota_{n,g}^{(1)} : \fS_{L_n}^{(1)}[E_n^{-1}]^\wedge_p \rightarrow \fS_{L_{n,g}}^{(1)}[E_n^{-1}]^\wedge_p$. It corresponds to $T|_{\Gal_{(L_n)_g}}$ by Lem. \ref{func-imperf}. On the other hand, $(f_g)_n$ also corresponds to $T|_{\Gal_{(L_g)_n}}$ by Prop. \ref{res-perf}. But $(f_g)_n=\mathrm{id}_{\fS_{L_{n,g}}^{(1)}[E^{-1}_n]^\wedge_p} \otimes_{\iota_{n,g}^{(1)}, \fS_{L_n}^{(1)}[E^{-1}_n]^\wedge_p} f_n$, where $f_n$ is as in Eq. \ref{nth-pullback}. Therefore, $h_n$ and $f_n$ are two isomorphisms over $\fS_{L_n}^{(1)}[E_n^{-1}]^\wedge_p$ such that after extending the scalar along a $p$-completely faithfully flat map $\iota_{n,g}^{(1)} : \fS_{L_n}^{(1)}[E_n^{-1}]^\wedge_p \rightarrow \fS_{L_{n,g}}^{(1)}[E_n^{-1}]^\wedge_p$, they becomes equal. Hence, $h_n$ and $f_n$ are equal.
\end{proof}

\noindent The analogue of Cor. \ref{res-perf-fin} for the imperfect residue field case can be deduced similarly. 

\begin{cor}\label{res-imperf-fin}
Let $T$ be a finite height $\Z_p$-lattice in $p$-adic representation of $\Gal_L$ and $(\fM_L, \varphi_{\fM_L}, f)$ be the associated finite height descent data over $\fS_L^{(1)}[E^{-1}]^\wedge_p$. Then for every $n > 0$, the descend data corresponding to the restriction $T|_{\Gal_{L_n}}$ is given by $(\fM_{L_n}, \varphi_{\fM_{L_n}}, f_n)$ in $\textbf{DD}_{\cO_{\cE}}^{\text{fin}}$.

\end{cor}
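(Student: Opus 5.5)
The plan is to deduce the statement from Prop. \ref{res-imperf}, in the same way that Cor. \ref{res-perf-fin} follows from Prop. \ref{res-perf}. Let $(\fM_L, \varphi_{\fM_L}, f)$ be the finite height descent datum attached to $T$. This means that the étale $\varphi$-module is $\cM_L = \cO_{\cE,L} \otimes_{\fS_L} \fM_L$, and that $f$ is the descent isomorphism over $\fS_L^{(1)}[E^{-1}]^\wedge_p$. Prop. \ref{res-imperf} tells us that the object of $\textbf{DD}_{\cO_{\cE}}$ attached to $T|_{\Gal_{L_n}}$ is $(\cM_{L_n}, \varphi_{\cM_{L_n}}, f_n)$. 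Here $\cM_{L_n} = \fS_{L_n}[E_n^{-1}]^\wedge_p \otimes_{\iota_n} \cM_L$, and $f_n$ is the base change of $f$ along $\iota_n^{(1)}$. So it only remains to check that this triple is a finite height object, with underlying Breuil-Kisin module $\fM_{L_n}$.

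For this I will use two facts proved earlier. First, by the proposition following Prop. \ref{extn-etalephi}, $\fM_{L_n} = \fS_{L_n} \otimes_{\iota_n, \fS_L} \fM_L$ is a finite free Breuil-Kisin module of $E_n$-height $r$. Second, the displayed chain of base-change identities right after that proposition gives
$$\cM_{L_n} \cong \fS_{L_n}[E_n^{-1}]^\wedge_p \otimes_{\fS_{L_n}} \fM_{L_n}.$$
I will check that this is compatible with Frobenius. Both Frobenii are $\varphi \otimes \varphi_{\fM_L}$, and $\iota_n$ commutes with $\varphi$, so this is immediate. Hence $T|_{\Gal_{L_n}}$ is of finite $E_n$-height, and its descent datum is $(\fM_{L_n}, \varphi_{\fM_{L_n}}, f_n)$.

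Next I rewrite $f_n$ in terms of $\fM_{L_n}$. Since $\iota_n^{(1)} \circ p_i = p_i \circ \iota_n$ (diagram \ref{diag-pushout-imperf}), associativity of tensor products identifies
$$\fS_{L_n}^{(1)}[E_n^{-1}]^\wedge_p \otimes_{p_i, \fS_{L_n}[E_n^{-1}]^\wedge_p} \cM_{L_n} \quad\text{with}\quad \fS_{L_n}^{(1)}[E_n^{-1}]^\wedge_p \otimes_{p_i, \fS_{L_n}} \fM_{L_n}.$$
Under these identifications $f_n$ takes the required form. Its compatibility with Frobenius and the cocycle condition over $\fS_{L_n}^{(2)}[E_n^{-1}]^\wedge_p$ were already verified when $f_n$ was constructed in Eq. \ref{nth-pullback}.

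I expect no real obstacle; the substance lies in Prop. \ref{res-imperf}. The one point needing care is that Prop. \ref{res-imperf} gives the étale $\varphi$-module of $T|_{\Gal_{L_n}}$ only up to isomorphism in $\text{Mod}^\varphi_{\cO_{\cE,L}}$. I will therefore transport the Breuil-Kisin lattice $\fM_{L_n}$ along that isomorphism. This is harmless, because the property of being of finite height is preserved under isomorphism of étale $\varphi$-modules.
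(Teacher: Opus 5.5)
Your proposal is correct and takes the same route as the paper. The paper deduces this corollary "similarly" to Cor.~\ref{res-perf-fin}: combine Prop.~\ref{res-imperf} with the base-change identification $\cM_{L_n} = \fS_{L_n}[E_n^{-1}]^\wedge_p \otimes_{\fS_{L_n}} \fM_{L_n}$, then rewrite $f_n$ accordingly. Your extra points (that $\fM_{L_n}$ has $E_n$-height $r$, the Frobenius compatibility, and transporting the lattice along the isomorphism from Prop.~\ref{res-imperf}) just make explicit what the paper leaves implicit.
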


\section{Proof of main result : CDVR case}\label{Sec-4}

\subsection{CDVR version of Thm. \ref{Gao-main-perf}} 

In this section, we prove Thm. \ref{main-thm-2}. More precisely, we prove that for a given finite $E(u)$-height $\Z_p$-representation $T$ of $\Gal_L$, there exists $m \in \N \cup \{0\}$ such that $T|_{\Gal_{L_m}}$ is semistable representation of $\Gal_{L_m}$. 

\noindent Let us denote the category of finite $E(u)$-height (with respect to $\overrightarrow{\pi}$ and $\overrightarrow{X_i}$; c.f. Def. \ref{def-2}) $\Z_p$-representations of $\Gal_L$ (resp. $\Gal_{L_g}$) by $\text{Rep}_{\Z_p}^{\text{fin}}(\Gal_L)$ (resp. $\text{Rep}_{\Z_p}^{\text{fin}}(\Gal_{L_g})$). The continuous map $\Gal_{L_g} \rightarrow \Gal_L$ induces the natural restriction functor 
$$ \cT_g : \text{Rep}_{\Z_p}(\Gal_L) \longrightarrow \text{Rep}_{\Z_p}(\Gal_{L_g}) $$
defined by restricting the Galois action : $T \mapsto T|_{\Gal_{L_g}}$. Let $T$ be an object in $\text{Rep}_{\Z_p}^{\text{fin}}(\Gal_L)$ that corresponds to an object $(\fM_L, \varphi_{\fM_L}, f)$ in $\textbf{DD}^{\text{fin}}_{\cO_{\cE,L}}$. Then $\cT_g(T)=T|_{\Gal_{L_g}}$ is an object in $\text{Rep}_{\Z_p}^{\text{fin}}(\Gal_{L_g})$ that corresponds to $(\fM_{L_g}, \varphi_{\fM_{L_g}}, f_g)$, where the underlying Breuil-Kisin module $\fM_{L_g}=\fM_L \otimes_{\fS_L} \fS_{L_g}$ over $\fS_{L_g}$ and $f_g$ is scalar extension of $f$ along $\iota_g^{(1)} : \fS_L[E^{-1}]^\wedge_p \rightarrow \fS_{L_g}[E^{-1}]^\wedge_p$. Since Thm. \ref{Gao-main-perf} holds for any CDVF with perfect residue field, we have that $T|_{\Gal_{L_{m,g}}}$ is semistable $\Z_p$-representation of $\Gal_{L_{m,g}}$; where $m=\text{max} \{ i : \zeta_{p^i} \in (L_g)^{un} \}$. Note that $m=\text{max} \{ i : \zeta_{p^i} \in L^{un} \}$.\medskip

\noindent Recall that $\iota_g : (\fS_L, E, \N)^a \rightarrow (\fS_{L_g}, E, \N)^a$ is the map of prism given by the faithfully flat map $\iota_g : \fS_L \rightarrow \fS_{L_g}$ (c.f. Subsec. \ref{sec-2-3}); in particular it is injective. It is not difficult to check that $\fS_L[E^{-1}]^\wedge_p \cap \fS_{L_g} = \fS_L$ as subsets of $\fS_{L_g}[E^{-1}]^\wedge_p$. From the pushout diagram in Subsec. \ref{sec-2-3} we have the map $\iota_g^{(1)} : (\fS^{(1)}_L, E, \N)^a \rightarrow (\fS^{(1)}_{L_g}, E, \N)^a$. This map is not injective; for instance one can see that $\iota_g^{(1)} (X_{i,2}) = \iota_g^{(1)} (X_i)$ for all $1 \leq i \leq b$. We have the following lemma on intersection equality for the self-coproducts of Breuil-Kisin prisms:

\begin{lem}\label{intersection-equality}
One has that $\iota_g^{(1)} (\fS^{(1)}_L[E^{-1}]^\wedge_p) \cap \fS^{(1)}_{L_g} = \iota_g^{(1)} (\fS_L^{(1)})$; regarded as subsets of $\fS_{L_g}^{(1)}[E^{-1}]^\wedge_p$.
\end{lem}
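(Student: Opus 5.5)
The plan is to reduce the intersection statement to a structural description of $\iota_g^{(1)}$ as a retraction. First, one inclusion is trivial: $\iota_g^{(1)}(\fS_L^{(1)})$ lies in $\fS_{L_g}^{(1)}$, and inside $\iota_g^{(1)}(\fS_L^{(1)}[E^{-1}]^\wedge_p)$. For the reverse inclusion, I would describe the image of $\iota_g^{(1)}$ concretely. Since $\iota_g^{(1)}$ identifies $X_{i,2}$ with $X_i$, its image is generated by $\fS_{L}$ (embedded via $\iota_g$, i.e. $X_i\mapsto[X_i^\flat]$) together with $u_2$ and the divided elements $\delta$-generated by $(1-u_2/u)/E$. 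So the image of $\fS_L^{(1)}$ should be the closed subring
\[
A:=\iota_g(\fS_L)[[1-\tfrac{u_2}{u}]]\{\tfrac{1-\frac{u_2}{u}}{E}\}^\wedge_\delta\subset \fS_{L_g}^{(1)},
\]
i.e. the self-coproduct of the Breuil--Kisin log prism of the base $\fS_L$ ``relative in the $u$-direction only''. The first step is to verify this, by noting that the elements $(1-X_{i,2}/X_i)/E$ map to $0$ and computing the image of the $\delta$-completion; equivalently, exhibit $\fS_L^{(1)}\to A$ as the quotient by the $\delta$-ideal generated by the $1-X_{i,2}/X_i$ (and their divided versions), which is the pushout of $\fS_L^{(1)}$ along the multiplication-type map collapsing the $X$-variables.

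The second step is a base change description: I expect $\fS_{L_g}^{(1)}\cong \fS_{L_g}\widehat{\otimes}_{\fS_L} A$ ($(p,E)$-completed), using the explicit presentation $\fS_{L_g}^{(1)}=\fS_{L_g}[[1-u_2/u]]\{(1-u_2/u)/E\}^\wedge_\delta$ from Subsec. \ref{sec-2-3} and that the $\delta$-envelope construction commutes with the $(p,E)$-completely flat base change $\fS_L\to\fS_{L_g}$ (which is $(p,E)$-completely flat since $\cO_{L_0}\to W(k_g)$ is flat, being a map of Cohen rings). Similarly $\fS^{(1)}_{L_g}[E^{-1}]^\wedge_p\cong \fS_{L_g}[E^{-1}]^\wedge_p\widehat\otimes A[E^{-1}]^\wedge_p$, and the image of $\fS_L^{(1)}[E^{-1}]^\wedge_p$ is $A[E^{-1}]^\wedge_p$.

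The third step is the intersection itself: $A[E^{-1}]^\wedge_p\cap \fS_{L_g}^{(1)}=A$ inside $\fS_{L_g}^{(1)}[E^{-1}]^\wedge_p$. This is the analogue of the stated fact $\fS_L[E^{-1}]^\wedge_p\cap\fS_{L_g}=\fS_L$, and I would deduce it the same way: reduce modulo $p$ (all rings are $p$-torsion free and $p$-complete, and $E\equiv u^e$ mod $p$), so it suffices to show $\bar A[u^{-1}]\cap \bar{B}=\bar A$ with $\bar B=\fS_{L_g}^{(1)}/p$, then lift by a standard $p$-adic successive approximation. Mod $p$, $\bar B=\bar A\otimes_{\bar\fS_L}\bar\fS_{L_g}$ is faithfully flat over $\bar A$, and $\bar A\to\bar A[u^{-1}]$; the claim becomes that $\bar B/\bar A$ is $u$-torsion free, which follows from flatness of $\bar\fS_{L_g}/\bar\fS_L$ and $u$-torsion freeness of $\bar\fS_{L_g}/\bar\fS_L$ (i.e. $k_g[[u]]/k(X)[[u]]$ is $u$-torsion free, clear coefficientwise), tensored with the flat $\bar A$.

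The main obstacle is the first two steps: identifying the image of $\iota_g^{(1)}$ with $A$ and proving the completed base change decomposition, since $\delta$-envelopes and completions need not commute naively with tensor products and $\fS_L^{(1)}\to A$ is not injective; I would handle it via the explicit description of coproducts in Lem. 2.17 and Subsec. 2.3 of \cite{DLMSII}, checking transversality and $p$-complete flatness so that the $\delta$-envelope is computed by a flat base change.
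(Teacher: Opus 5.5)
Your proposal is correct and is essentially the paper's proof. The paper also identifies $\iota_g^{(1)}(\fS_L^{(1)})$ with a completed base change of the $u$-direction-only coproduct, namely $\fS^{(1)}_K\widehat{\otimes}_{W(k)}A$, where the paper's $A$ is the image of $\cO_{L_0}$ in $W(k_g)$; it writes $\fS^{(1)}_{L_g}\cong\fS^{(1)}_K\widehat{\otimes}_{W(k)}W(k_g)$ and shows that $E\equiv u^e$ is regular on $Q/pQ$ for the cokernel $Q$, so $Q\hookrightarrow Q[E^{-1}]^\wedge_p$, which is exactly your $u$-torsion-freeness of $\bar B/\bar A$ followed by $p$-adic approximation. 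The differences are cosmetic: you base change along $\fS_L\to\fS_{L_g}$ rather than along $A\subset W(k_g)$ over $W(k)$, and you make explicit two points the paper only asserts, namely that the image of $\iota_g^{(1)}$ is this closed subring and that the image of $\fS^{(1)}_L[E^{-1}]^\wedge_p$ is its completed localization.
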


\begin{proof}
$\fS_L = \cO_{L,0}[[u]]$; $\cO_{L,0} = (W(k)\langle X_1^\pm, \cdots, X_b^\pm \rangle )_{(p)}^\wedge$. $\fS_{L_g}=W(k_g)[[u]]$; where the coperfection $k_g = k(X_1^{1/p^\infty}, X_2^{1/p^\infty}, \cdots, X_b^{1/p^\infty})$ (c.f. proof of Prop. \ref{mainthm-prelim}). Recall the description of the rings $\fS_L^{(1)}$ and $\fS_{L_g}^{(1)}$ from Subsec. \ref{sec-2-2} and \ref{sec-2-3}:

$$ \fS_L^{(1)} =  \fS_L[[1-\frac{u_2}{u}, 1-\frac{X_{1,2}}{X_{1}}, 1-\frac{X_{2,2}}{X_{2}}, \cdots, 1-\frac{X_{b,2}}{X_{b}}]]\{ \dfrac{1-\frac{u_2}{u}}{E(u)}, \dfrac{1-\frac{X_{1,2}}{X_{1}}}{E(u)}, \dfrac{1-\frac{X_{2,2}}{X_{2}}}{E(u)} ,\cdots, \dfrac{1-\frac{X_{b,2}}{X_{b}}}{E(u)}  \}^\wedge_{\delta};$$

$$\fS_{L_g}^{(1)} = \fS_{L_g}[[1-\frac{u_2}{u}]]\{ \dfrac{1-\frac{u_2}{u}}{E(u)}\}^\wedge_{\delta}.$$
The map $\iota_g^{(1)} : \fS_L^{(1)} \rightarrow \fS_{L_g}^{(1)}$ is given by $X_i \mapsto [X_i^\flat]$ and $X_{i,2} \mapsto [X_i^\flat]$ for $1 \leq i \leq b$. Let us write \footnote{We denote it by $\fS^{(1)}_{K}$ because it is self-coproduct of Breuil-Kisin log prism $(\fS_K, E(u), \N)^a$ over $\cO_K$; where $\fS_K=W(k)[[u]]$.} $\fS^{(1)}_{K} := W(k)[[u]][[1-\frac{u_2}{u}]]\{ \dfrac{1-\frac{u_2}{u}}{E(u)}\}^\wedge_{\delta}$ and $A:=(W(k) \langle [X_1^\flat]^\pm, [X_2^\flat]^\pm, \cdots, [X_b^\flat]^\pm \rangle)^\wedge_{(p)} \subset W(k_g)$. Note that the image $\iota_g^{(1)}(\fS_L^{(1)})$ is isomorphic to $\fS^{(1)}_{K} \widehat{\otimes}_{W(k)} A$. Similarly, $\fS_{L_g}^{(1)} \cong \fS^{(1)}_{K} \widehat{\otimes}_{W(k)} W(k_g)$. Consider the injection $A \hookrightarrow W(k_g)$. The map of corresponding residue fields $k(X_1, \dots, X_b) \hookrightarrow k_g$ is injective. So, the quotient $W(k_g)/A$ is $p$-torsion free.\smallskip

\noindent The ring $\fS^{(1)}_{K}$ is $p$-completely flat over $W(k)$. We apply $\fS^{(1)}_{K} \widehat{\otimes}_{W(k)} -$ to the exact sequence $$ 0 \rightarrow A \rightarrow W(k_g) \rightarrow W(k_g)/A \rightarrow 0$$ to get the short exact sequence: 
$$ 0 \rightarrow \iota_g^{(1)}(\fS_L^{(1)}) \rightarrow \fS_{L_g}^{(1)} \rightarrow Q \rightarrow 0;$$ where $Q := \fS^{(1)}_{K} \widehat{\otimes}_{W(k)} W(k_g)/A$. Now, $Q/pQ = (\fS^{(1)}_{K} / p.\fS^{(1)}_{K}) \otimes_k (k_g / k(X))$. Note that $u$ is a non-zero divisor in $Q/pQ$ and $E(u)=u^e$ in $Q/pQ$; so $E(u)$ is non-zero divisor in $Q/pQ$ and hence $E(u)$ is non-zero divisor in $Q/p^nQ$ for all $n \geq 1$. Therefore, $Q \hookrightarrow Q[E^{-1}]^\wedge_p$ is injective as $Q$ is $p$-adically complete. Since, localization is exact and inverse limit is left-exact, we have
\begin{equation}
\begin{tikzcd}
0 \arrow[r] & \iota_g^{(1)}(\fS_L^{(1)}) \arrow[r] \arrow[d] & \fS_{L_g}^{(1)} \arrow[r] \arrow[d] & Q \arrow[d, "\text{injective}"] \\
0 \arrow[r] & \iota_g^{(1)}(\fS_L^{(1)})[E^{-1}]^\wedge_p \arrow[r]           & \fS_{L_g}^{(1)}[E^{-1}]^\wedge_p \arrow[r]           & Q[E^{-1}]^\wedge_p          
\end{tikzcd}
\end{equation}

\noindent Let $x \in \iota_g^{(1)}(\fS_L^{(1)})[E^{-1}]^\wedge_p \cap \fS_{L_g}^{(1)} \subset \fS_{L_g}^{(1)}[E^{-1}]^\wedge_p$. Then, $x$ mapsto $0$ in $Q[E^{-1}]^\wedge_p$. By injectivity of $Q \hookrightarrow Q[E^{-1}]^\wedge_p$ and exactness of the upper sequence, $x \in \iota_g^{(1)}(\fS_L^{(1)})$. The reverse inclusion is obvious. Hence, we have the equality  $\iota_g^{(1)}(\fS_L^{(1)})[E^{-1}]^\wedge_p \cap \fS_{L_g}^{(1)} = \iota_g^{(1)}(\fS_L^{(1)})$.
\end{proof}

\begin{prop}\label{prop-imp-perf-st}
 Let $T$ be a $\Z_p$-representation of $\Gal_L$ and $(\cM_{L}, \varphi_{\cM_L}, f)$ be the associated descent data in $\textbf{DD}_{\cO_{\cE,L}}$. If $T$ is finite $E(u)$-height $\Z_p$-representation of $\Gal_L$ and its restriction $T|_{\Gal_{L_g}}$ is semistable $\Z_p$-representation of $\Gal_{L_g}$, then $T$ is semistable $\Z_p$-representation of $\Gal_L$.
\end{prop}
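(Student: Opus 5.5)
By the equivalence of Thm. \ref{st-dd} and the diagram \eqref{diag-st-fin}, it suffices to produce an object of $\textbf{DD}_{\fS_L}$ whose image under $I_{\text{ét}}$ is the descent datum $(\cM_L,\varphi_{\cM_L},f)$ of $T$. Because $T$ has finite $E(u)$-height, there is a finite free Breuil--Kisin module $\fM_L$ of finite $E$-height with $\cM_L=\cO_{\cE,L}\otimes_{\fS_L}\fM_L$. The descent isomorphism is then $f:\fS_L^{(1)}[E^{-1}]^\wedge_p\otimes_{p_1,\fS_L}\fM_L\to \fS_L^{(1)}[E^{-1}]^\wedge_p\otimes_{p_2,\fS_L}\fM_L$. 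The whole task is to show that $f$ and $f^{-1}$ carry the integral lattices $\fS_L^{(1)}\otimes_{p_i}\fM_L$ into each other. Once that holds, Frobenius compatibility and the cocycle condition over $\fS_L^{(2)}$ follow from those over $\fS_L^{(2)}[E^{-1}]^\wedge_p$, because the maps $\fS_L^{(j)}\to\fS_L^{(j)}[E^{-1}]^\wedge_p$ are injective.

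I would get the integrality by passing to $L_g$. By Lem. \ref{func-imperf}, the descent datum of $T|_{\Gal_{L_g}}$ is $(\fM_{L_g},\varphi,f_g)$, where $\fM_{L_g}=\fS_{L_g}\otimes_{\iota_g}\fM_L$ and $f_g$ is the base change of $f$ along $\iota_g^{(1)}$. Since $T|_{\Gal_{L_g}}$ is semistable, Thm. \ref{st-dd} and the full faithfulness of $I_{\text{ét}}$ for $L_g$ give an integral descent datum $(\fN,\varphi,h)\in\textbf{DD}_{\fS_{L_g}}$ together with an isomorphism $I_{\text{ét}}(\fN,h)\cong(\cM_{L_g},f_g)$.

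I then need $\fN=\fM_{L_g}$ inside $\cM_{L_g}$. I would use the full faithfulness of $\tilde I_{\text{ét}}:\text{Mod}^\varphi_{\fS_{L_g}}\to\text{Mod}^\varphi_{\cO_{\cE,L_g}}$. This says a finite height Breuil--Kisin lattice in an étale $\varphi$-module is unique, and since the perfect residue field case has finite free $\fM$ by Cor. 3.9 of \cite{DLMSII}, this yields $\fN\cong\fM_{L_g}$ compatibly. It follows that $f_g$ maps $\fS_{L_g}^{(1)}\otimes_{p_1}\fM_{L_g}$ isomorphically onto $\fS_{L_g}^{(1)}\otimes_{p_2}\fM_{L_g}$.

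Next I descend this integrality to $L$. Choose an $\fS_L$-basis $e_1,\dots,e_d$ of $\fM_L$ and write $f(1\otimes e_j)=\sum_i a_{ij}(1\otimes e_i)$ with $a_{ij}\in\fS_L^{(1)}[E^{-1}]^\wedge_p$, and similarly for $f^{-1}$. The matrix of $f_g$ in the induced bases is $(\iota_g^{(1)}(a_{ij}))$, and by the previous step its entries lie in $\fS_{L_g}^{(1)}$. Lem. \ref{intersection-equality} then gives $\iota_g^{(1)}(a_{ij})\in\iota_g^{(1)}(\fS_L^{(1)})$.

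The main obstacle is lifting this back to $a_{ij}\in\fS_L^{(1)}$, because $\iota_g^{(1)}$ is not injective. I would first try to show that $\iota_g^{(1)}$ is injective on $\fS_L^{(1)}[E^{-1}]^\wedge_p$ modulo $\ker\iota_g^{(1)}$ in a compatible way, i.e.\ that $\ker(\iota_g^{(1)}[E^{-1}]^\wedge_p)=(\ker\iota_g^{(1)})[E^{-1}]^\wedge_p$. I would also use the $\varphi$-equivariance of $f$ and the finite-height condition: for $\tilde a=a-b$ with $b\in\fS_L^{(1)}$ and $\tilde a\in\ker$, apply the standard iteration of the Frobenius relation $A\varphi(\cdot)=\varphi(\cdot)A'$ (with $E$-height bounded matrices) to bound the $E$-denominators of $\tilde a$ and force it to be integral. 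If this works, $f$ is integral, and so is $f^{-1}$ by the same argument. This gives an object of $\textbf{DD}_{\fS_L}$ mapping to $(\cM_L,f)$, so $T$ is semistable by \eqref{diag-st-fin}.
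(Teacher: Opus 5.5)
\textbf{There is a genuine gap, and it cannot be closed.}

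Your strategy is the paper's: pass to $L_g$, use that $f_g$ is integral, and apply Lem.~\ref{intersection-equality}. Identifying the integral lattice over $\fS_{L_g}$ with $\fS_{L_g}\otimes_{\iota_g}\fM_L$ via uniqueness of finite-height lattices is a point the paper leaves implicit, and your treatment of it is fine.

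The gap is the one you flag yourself. Lem.~\ref{intersection-equality} only gives $\iota_g^{(1)}(a_{ij})\in\iota_g^{(1)}(\fS_L^{(1)})$. Since $\iota_g^{(1)}$ kills $X_{i,2}-X_i$, this says nothing about $a_{ij}$ itself. The paper's proof makes exactly this jump, from the lemma straight to $f_{\text{int}}$, so it supplies no missing ingredient either.

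Neither of your remedies works:
\begin{itemize}
\item Even if $\ker(\iota_g^{(1)}[E^{-1}]^\wedge_p)=(\ker\iota_g^{(1)})[E^{-1}]^\wedge_p$ held, the right-hand side contains elements with unbounded $E$-denominators, e.g.\ $E^{-n}(X_{1,2}-X_1)$ for $n\gg0$. So it gives no integrality.
\item No Frobenius iteration can force integrality. The kernel contains $\varphi$-invariant, height-$0$, non-integral descent matrices, as the following example shows.
\end{itemize}

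\textbf{Counterexample.} Assume $\zeta_p\in K$. Let $X_1^{1/p}$ be the relevant term of $\overrightarrow{X_1}$, put $L'=L(X_1^{1/p})$, and let $T=\Z_p[\Gal(L'/L)]$. Then $L'/L$ is cyclic of degree $p$ with purely inseparable residue field extension.
\begin{itemize}
\item Finite height: since $L'\subset L_\infty$, $T|_{\Gal_{L_\infty}}$ is trivial. So $\cM_L=\cO_{\cE,L}\otimes_{\Z_p}T$ arises from $\fM_L=\fS_L\otimes_{\Z_p}T$ with Frobenius $\varphi\otimes 1$, which has $E$-height $0$. Thus $T$ is of finite $E(u)$-height.
\item Semistable over $L_g$: $\iota_g(X_1)=[X_1^\flat]$ has a $p$-th root in $W(k_g)$ and $\zeta_p\in L_g$, so $L'$ embeds into $L_g$. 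Hence $T|_{\Gal_{L_g}}$ is trivial, and in particular semistable.
\item Not semistable over $L$: the residue extension of $L'/L$ is inseparable, so $L'\cap L^{\mathrm{ur}}=L$. Hence inertia acts nontrivially on $T$. But semistable representations with all Hodge--Tate weights $0$ are crystalline and unramified.
\end{itemize}
In your notation, the descent matrix $A$ of $T$ satisfies $\varphi(A)=A$ and $\iota_g^{(1)}(A)=I$. Yet $A\notin\mathrm{GL}_p(\fS_L^{(1)})$: otherwise $(\fS_L\otimes_{\Z_p}T,\varphi\otimes 1,A)$ would be an object of $\textbf{DD}_{\fS_L}$, and $T$ would be semistable by \eqref{diag-st-fin}.

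So the lifting step genuinely fails, and the proposition as stated is false. The finite-height condition only sees $\Gal_{L_\infty}$, and $\Gal_{L_g}$ maps into $\Gal_{L(X_1^{1/p^\infty},\dots,X_b^{1/p^\infty})}$. Neither hypothesis detects ramification in the directions of the $p$-basis. The same $T$ also contradicts Thm.~\ref{mainthm-cdvf}, since $L_m=L(\pi_m)$ does not contain $X_1^{1/p}$. A correct statement would have to adjoin roots of the $X_i$ as well, or add a hypothesis controlling that direction.
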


\begin{proof}
The descent data associated to $T$ is given by $(\cM_{L}, \varphi_{\cM_L}, f)$; where $$ f : \fS_L^{(1)}[E^{-1}]^\wedge_p \otimes_{p_1, \fS_L[E^{-1}]^\wedge_p} \cM_L \rightarrow \fS_L^{(1)}[E^{-1}]^\wedge_p \otimes_{p_2, \fS_L[E^{-1}]^\wedge_p} \cM_L,$$ which is compatible with Frobenii and satisfies cocycle conditions over $\fS_L^{(2)}[E^{-1}]^\wedge_p$. $T$ is of finite $E$-height implies that there exists finite free $\fS_L$-module $\fM_L$ such that $\cM_L = \fM_L[E^{-1}]^\wedge_p$. So, we can write the descent isomorphism $f$ as :
$$ f : \fS_L^{(1)}[E^{-1}]^\wedge_p \otimes_{p_1, \fS_L} \fM_L \rightarrow \fS_L^{(1)}[E^{-1}]^\wedge_p \otimes_{p_2, \fS_L} \fM_L.$$
Since $T|_{\Gal_{L_g}}$ is semistable $\Z_p$-representation of $\Gal_{L_g}$, by Thm. \ref{st-dd}, it corresponds to integral Kisin descent data $(\fM_{L_g}, \varphi_{\fM_{L_g}}, f_g)$ in $\textbf{DD}_{\fS_{L_g}}^{\text{fin}}$ given by $$ f_g : \fS_{L_g}^{(1)} \otimes_{p_1, \fS_{L_g}} \fM_{L_g} \rightarrow \fS_{L_g}^{(1)}\otimes_{p_2, \fS_{L_g}} \fM_{L_g}.$$ The isomorphisms $f$ and $f_g$ are compatible via $$\mathrm{id}_{\fS^{(1)}_{L_g}[E^{-1}]^\wedge_p} \otimes_{\iota^{(1)}_g, \fS_L^{(1)}[E^{-1}]^\wedge_p} f = \mathrm{id}_{\fS^{(1)}_{L_g}[E^{-1}]^\wedge_p} \otimes_{\fS_{L_g}^{(1)}} f_g.$$

Let us fix a basis of $\fM_L$. Then, the matrix of $\mathrm{id}_{\fS^{(1)}_{L_g}[E^{-1}]^\wedge_p} \otimes_{\iota^{(1)}_g, \fS_L^{(1)}[E^{-1}]^\wedge_p} f $ belongs to $\text{Mat}(\fS_L^{(1)}[E^{-1}]^\wedge_p)$. On the other hand, the matrix of $\mathrm{id}_{\fS^{(1)}_{L_g}[E^{-1}]^\wedge_p} \otimes_{\fS_{L_g}^{(1)}} f_g$ belongs to $\text{Mat}(\fS_{L_g}^{(1)})$. By Lem. \ref{intersection-equality}, we have $\iota_g^{(1)}(\fS^{(1)}_L) = \iota_g^{(1)}(\fS^{(1)}_L[E^{-1}]^\wedge_p) \cap \fS_{L_g}^{(1)} \subset \fS^{(1)}_{L_g}[E^{-1}]^\wedge_p$. Therefore, it gives an isomorphism $$ f_{\text{int}} : \fS_L^{(1)} \otimes_{p_1, \fS_L} \fM_L \rightarrow \fS_L^{(1)} \otimes_{p_2, \fS_L} \fM_L.$$ All scalar extensions are compatible with Frobenius, so $f_{\text{int}}$ is compatible with Frobenii. Moreover, it satisfies cocycle conditions over $\fS_L^{(2)}$. Therefore, we get the integral Kisin descent data $(\fM_L, \varphi_{\fM_L}, f_{\text{int}})$ in $\textbf{DD}_{\fS_L}$ that induces the given descent data $(\cM_{L}, \varphi_{\cM_L}, f)$ associated to $T$. Hence, $T$ is semistable $\Z_p$-representation of $\Gal_L$.
\end{proof}

\begin{thm}\label{mainthm-cdvf}
Let $m=\text{max} \{ i : \zeta_{p^i} \in L^{un} \}$. Let $T$ be an object of $\text{Rep}_{\Z_p}^{\text{fin}}(\Gal_{L})$. Then $T|_{\Gal_{L_m}}$ is semistable $\Z_p$-representation of $\Gal_{L_m}$. 
\end{thm}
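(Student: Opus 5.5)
The plan is to reduce to the perfect residue field case through $L_g$, and then descend semistability back to $L_m$ using Prop. \ref{prop-imp-perf-st}, applied to the field $L_m$ in place of $L$.

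First, I will check that $T|_{\Gal_{L_m}}$ is a finite $E_m$-height representation of $\Gal_{L_m}$. Let $(\fM_L, \varphi_{\fM_L}, f)$ be the finite height descent datum attached to $T$. By Cor. \ref{res-imperf-fin}, the descent datum of $T|_{\Gal_{L_m}}$ is the pullback $(\fM_{L_m}, \varphi_{\fM_{L_m}}, f_m)$ along $\iota_m^{(1)}$. The preceding proposition on $\fM_{L_n}$ shows that $\fM_{L_m}=\fS_{L_m}\otimes_{\iota_m,\fS_L}\fM_L$ is a Breuil--Kisin module of $E_m$-height $\le r$. Note that $L_m$ is again a CDVF with residue field $k(X_1,\dots,X_b)$ and uniformizer $\pi_m$, whose minimal polynomial is $E_m=E(u^{p^m})$. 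So every construction of Sec. \ref{sec-2} and Sec. \ref{sec-3} applies to $L_m$, with $(L_m)_g=L_{m,g}=L_g(\pi_m)$.

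Second, I will get semistability over $L_{m,g}$. By Lem. \ref{func-imperf} (in the form used in the proof of Prop. \ref{mainthm-prelim}), $T|_{\Gal_{L_g}}$ is of finite $E$-height with Breuil--Kisin module $\fM_{L_g}=\fM_L\otimes_{\fS_L}\fS_{L_g}$. Since $L_g$ has perfect residue field, Thm. \ref{Gao-main-perf} applies. Here $m$ is the same for $L$ and $L_g$, because the roots of unity in $\widehat{L^{un}}$ and $\widehat{L_g^{un}}$ agree: both are determined by $\widehat{K^{un}}$ and the residue extension is purely inseparable over $k(X)$. Hence $T|_{\Gal_{L_{m,g}}}$ extends to a semistable representation. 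In particular $T|_{\Gal_{L_{m,g}}}$ is semistable, with Hodge--Tate weights $\ge 0$.

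Third, I will apply Prop. \ref{prop-imp-perf-st} to $T':=T|_{\Gal_{L_m}}$ over $L_m$. The key compatibility is that the restriction of $T'$ along $\Gal_{(L_m)_g}\to\Gal_{L_m}$ equals $T|_{\Gal_{L_{m,g}}}$. This should follow from the commutative square (\ref{square-sec-2-3}) together with Lem. \ref{func-imperf} and Prop. \ref{res-perf}; the proof of Prop. \ref{res-imperf} already identifies the corresponding descent data. The proposition then yields an integral descent datum $f_{\mathrm{int}}$ over $\fS_{L_m}^{(1)}$. By Thm. \ref{st-dd}, this makes $T'$ semistable.

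The main obstacle is the input to Prop. \ref{prop-imp-perf-st} over $L_m$, namely Lem. \ref{intersection-equality} for the coproducts $\fS^{(1)}_{L_m}$ and $\fS^{(1)}_{L_{m,g}}$, which are built with $E_m$ rather than $E$. I would rerun the same argument: write $\fS^{(1)}_{L_m}\cong \fS^{(1)}_{K_m}\widehat{\otimes}_{W(k)}A$ and $\fS^{(1)}_{L_{m,g}}\cong\fS^{(1)}_{K_m}\widehat{\otimes}_{W(k)}W(k_g)$. Then I use that $E_m\equiv u^{ep^m}$ modulo $p$ is a nonzerodivisor on $Q/pQ$. A second point to check is that the lattice $\fM_{L_{m,g}}$ produced by Thm. \ref{Gao-main-perf}/Thm. \ref{st-dd} coincides with $\fM_{L_m}\otimes\fS_{L_{m,g}}$. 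I expect this from full faithfulness of $\mathrm{Mod}^\varphi_{\fS}\to\mathrm{Mod}^\varphi_{\cO_\cE}$, i.e. uniqueness of the Kisin lattice via $\fS_L[E^{-1}]^\wedge_p\cap\fS_{L_g}=\fS_L$.
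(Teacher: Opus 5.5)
Your outline is the paper's own proof: finite $E_m$-height of $T|_{G_{L_m}}$ via Cor.~\ref{res-imperf-fin}, then semistability over $L_{m,g}=(L_m)_g$ via Thm.~\ref{Gao-main-perf}, then Prop.~\ref{prop-imp-perf-st} over $L_m$. The gap is that Prop.~\ref{prop-imp-perf-st}, the step that carries the argument, is not valid, and neither point you flag is where it breaks.

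Its proof infers that the matrix of $f$ has entries in $\fS_L^{(1)}$ because the matrix of $\iota_g^{(1)}(f)$ lies in $\iota_g^{(1)}(\fS_L^{(1)}[E^{-1}]^\wedge_p)\cap \fS_{L_g}^{(1)}=\iota_g^{(1)}(\fS_L^{(1)})$. That only puts the entries of $f$ in $\fS_L^{(1)}+\ker \iota_g^{(1)}$. The map $\iota_g^{(1)}$ is far from injective: it sends $X_{i,2}$ and $X_i$ to the same element, so its kernel contains non-integral elements such as $E^{-N}(X_{i,2}-X_i)$ for $N$ large. Conceptually, $G_{L_g}$ maps into $G_{L(X_1^{1/p^\infty},\dots,X_b^{1/p^\infty})}$. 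So $T|_{G_{L_g}}$, equivalently $f_g$, forgets exactly the part of the descent datum recording the Galois action in the $X_i$-Kummer directions. Lem.~\ref{intersection-equality}, with $E$ or with $E_m$, only describes the image and cannot recover that part.

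In fact the statement fails as written. Take $\zeta_p\in K$ (e.g.\ $K=\Q_p(\zeta_p)$, so $m=1$) and $b\ge 1$. Let $T=\Z_p[\mathrm{Gal}(L(X_1^{1/p})/L)]$ be the regular representation of this cyclic Kummer extension of degree $p$.
\begin{itemize}
\item Since $X_1^{1/p}\in L_\infty$, $T|_{G_{L_\infty}}$ is trivial, so $T$ has finite $E$-height with $\fM_L=\fS_L^{\oplus p}$ of height $0$.
\item $L_g$ contains a $p$-th root of $[X_1^\flat]$, so $T|_{G_{L_g}}$ is trivial, and a fortiori so is $T|_{G_{L_{m,g}}}$. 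Every hypothesis you feed into Prop.~\ref{prop-imp-perf-st} over $L_m$ therefore holds.
\item $L_m=L(\pi_m)$ has residue field $k(X_1,\dots,X_b)$, so $X_1^{1/p}\notin L_m$. Hence $L_m(X_1^{1/p})/L_m$ has degree $p$ with purely inseparable residue extension.
\item So $T|_{G_{L_m}}$ is an Artin representation on which inertia acts nontrivially. It is not crystalline, since crystalline Artin representations are unramified. Finite image forces $N=0$, so it is not semistable either.
\end{itemize}
In descent-data terms, $\iota_{m,g}^{(1)}$ sends the datum of $T|_{G_{L_m}}$ to the trivial, integral one, while the datum itself does not descend to $\fS^{(1)}_{L_m}$.

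So your two checks cannot close the gap. The $E_m$-version of Lem.~\ref{intersection-equality} is plausibly fine, and the lattice identification is fine by full faithfulness of $\mathrm{Mod}^\varphi_{\fS}\to\mathrm{Mod}^\varphi_{\cO_{\cE}}$. A correct argument would have to control the $X$-directions of $f$ directly. Moreover $L(\pi_m)$ is too small a field: when $\zeta_{p^m}\in L$, the tower $L_\infty/L$ has Galois subextensions such as $L(X_i^{1/p^m})$ that $L(\pi_m)$ does not absorb.
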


\begin{proof}

\noindent Let $T$ be a finite $E(u)$-height $\Z_p$-representation of $\Gal_L$. It corresponds to the descent data $(\fM_L, \varphi_{\fM_L}, f)$ over $\fS_L^{(1)}[E^{-1}]^\wedge_p$. The restriction $T|_{\Gal_{L_g}}$ is also of finite $E(u)$-height and corresponds to the descent data $(\fM_{L_g}, \varphi_{\fM_{L_g}}, f_g)$ over $\fS_{L_g}^{(1)}[E^{-1}]^\wedge_p$ (c.f. Lem. \ref{func-imperf}). By Thm. \ref{Gao-main-perf}, $T|_{\Gal_{L_{m,g}}}$ is semistable $\Z_p$-representation of $\Gal_{L_{m,g}}$. Now, $L_{m,g} = L_g(\pi_m) = (L_m)_g$. On the other hand, $T|_{\Gal_{L_m}}$ is finite $E_m$-height $\Z_p$-representation since by Cor. \ref{res-imperf-fin}, it corresponds to the finite height descent data $(\fM_{L_m}, \varphi_{\fM_{L_m}}, f_m)$ over $\fS_{L_m}^{(1)}[E_m^{-1}]^\wedge_p$. Therefore, applying Prop. \ref{prop-imp-perf-st} to the $\Z_p$-representation $T|_{\Gal_{L_m}}$ of $\Gal_{L_m}$, we have that $T|_{\Gal_{L_m}}$ is semistable. 

\end{proof}

\begin{rem}
As mentioned in Subsec. \ref{subsec-1.2}, Thm. \ref{mainthm-cdvf} does not fully provide the CDVR version of Thm. \ref{Gao-main-perf}. Indeed, we could not prove that $T|_{\Gal_{L_m}}[1/p]$ can be extended to a semistable $p$-adic representation of $\Gal_L$. A key ingredient to prove Thm. \ref{Gao-main-perf} in \cite{Gao} is the overconvergence of (rational) étale $(\varphi, \tau)$-modules for any CDVR with perfect residue field. In our relative setting, we need the overconvergence of Laurent $F$-crystals or equivalently its associated descent datum. This has been conjectured as Conj. 2.24 in \cite{DLo}. All of these would be discussed in a sequel of this work.
\end{rem}

\section{Proof of main result : small affine case}\label{sec-5}

In this section, we prove the potential semistability of families of \textit{finite height representations} in small affine case using the purity result Thm. \ref{purity-thm} from \cite{DLMSII}. We define the notion of finite $E$-height for a $\Z_p$-local system on an affinoid adic space $\cX$ over $K$ having semistable reduction. We see that finite $E$-height $\Z_p$-local system $\cX$ becomes semistable after pullback along a finite étale cover of $\cX$.\smallskip

\subsection{Finite $E$-height $\Z_p$-local system}\label{sec-5-1}
\noindent Recall from Sec. \ref{sec-1} that $R$ is a \textit{connected} $\cO_K$-algebra equipped with a $p$-adically completed étale map $\square : R^0 \rightarrow R$; where $$R^0 := \cO_K \langle X_1, X_2, \cdots, X_r, X_{r+1}^{\pm}, X_{r+2}^{\pm}, \cdots X_{b}^{\pm} \rangle/ (X_1.X_2. \dots X_r - \pi).$$
So, $X:=\mathrm{Spf}(R)$ is a semistable affine formal scheme over $\cO_K$. The adic generic fibre $\cX=X_{\eta}$ of $X$ is identified with locally noetherian adic space $\cX = \mathrm{Spa}(R[1/p], R)$. Let us denote the category of $\Z_p$-local systems over $\cX$ by $\textbf{Loc}_{\Z_p}(\cX)$. The formal scheme $X$ is equipped with the log structure $M_X$ given by $\N^b \rightarrow R^0 \xrightarrow{\square} R : e_\ell \mapsto X_\ell$. So we have a log formal scheme $(X,M_X)$. Let us denote the corresponding absolute log prismatic site by $(X, M_X)_{\Prism}$.\smallskip

\noindent We need to describe the category of descent data which is equivalent to $\textbf{Loc}_{\Z_p}(X_{\eta})$. Let us briefly recall the Breuil-Kisin prism in $(X, M_X)_{\Prism}$ from Example \ref{example}. We have $$\fS_{R^0} = W(k) \langle X_1, X_2, \cdots, X_r, X_{r+1}^{\pm}, X_{r+2}^{\pm}, \cdots X_{b}^{\pm} \rangle [[u]]/ (X_1.X_2. \dots X_r - u).$$ Recall that the $p$-adically completed étale map $\square: R^0 \rightarrow R$ induces $(p, E(u))$-adically completed étale map $\square_{\fS} : \fS_{R^0} \rightarrow \fS_{R,\square}$. We write $\fS_R :=\fS_{R,\square}$ (denoted by $\fS$ in Example \ref{example}). So, we have the Breuil-Kisin log prism $(\mathrm{Spf}(\fS_R), E(u), M_{\mathrm{Spf}(\fS_R)})$ in $(X, M_X)_{\Prism}$. Let us denote the self co-product and self triple-product of $(\mathrm{Spf}(\fS_R), E(u), M_{\mathrm{Spf}(\fS_R)})$ by $(\mathrm{Spf}(\fS_R^{(1)}), E(u), M_{\mathrm{Spf}(\fS_R^{(1)})})$ and $(\mathrm{Spf}(\fS_R^{(2)}), E(u), M_{\mathrm{Spf}(\fS_R^{(2)})})$, respectively in $(X, M_X)_{\Prism}$. We refer the reader to Subsec. 2.2 in \cite{DLMSII} for precise description of these coproducts. These are equipped with two projection maps given by $p_1, p_2 : \fS_R \rightarrow \fS_R^{(1)}$ and $p_{12},p_{23},p_{13} : \fS_R^{(1)} \rightarrow \fS_R^{(2)}$. Write $\cO_{\cE,R}=\fS_R[E^{-1}]^\wedge_p$.

\begin{defn}
We define the category $\textbf{DD}_{\cO_{\cE,R}}$ of descent data consisting of triplets $(\cM_R, \varphi_{\cM_R}, f)$, where

(1) $(\cM_R, \varphi_{\cM_R})$ is étale $\varphi$-module over $\cO_{\cE,R}$ i.e. $\cM_R$ is projective $\cO_{\cE,R}$-module equipped with $\varphi$-semilinear endomorphism $\varphi_{\cM_R}$ whose $\varphi$-linearization is an isomorphism, 

(2) $f$ is an isomorphism : $$ f : \fS_R^{(1)}[E^{-1}]^\wedge_p \otimes_{p_1, \fS_R[E^{-1}]^\wedge_p} \cM_R \rightarrow \fS_R^{(1)}[E^{-1}]^\wedge_p \otimes_{p_2, \fS_R[E^{-1}]^\wedge_p} \cM_R$$ compatible with Frobenii and satisfies cocycle conditions over $\fS_R^{(2)}[E^{-1}]^\wedge_p$. 
The morphisms are defined in obvious way.
\end{defn}

\noindent Recall the diagram \ref{triangle-diag} i.e. we have the following:
\begin{thm}(Thm. 3.14 and Lem. 3.21 in \cite{DLMSII})\label{cat-eq-rel}
The categories $\textbf{Loc}_{\Z_p}(\cX)$ and $\textbf{DD}_{\cO_{\cE,R}}$ are equivalent. 
\end{thm}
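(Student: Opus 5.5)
The statement is Thm. \ref{cat-eq-rel}, quoted from \cite{DLMSII}; the plan is to assemble it from the two equivalences in diagram \ref{triangle-diag}. The target is to compose
\[
\textbf{Loc}_{\Z_p}(\cX) \;\simeq\; \text{Vect}((X, M_X)_{\Prism}, \cO_{\Prism}[\cI_{\Prism}^{-1}]^\wedge_p)^{\varphi =1} \;\simeq\; \textbf{DD}_{\cO_{\cE,R}} .
\]

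\textbf{Step 1.} For the first equivalence (Thm. 3.14 of \cite{DLMSII}), I would use that the Laurent $F$-crystals on the log site agree with those on the underlying non-log absolute prismatic site. The reason is that inverting $\cI_{\Prism}$ and $p$-completing kills the dependence on the log structure, since $u$ becomes a unit after inverting $E$ mod $p$. The equivalence with $\Z_p$-local systems on $\cX$ is then the Bhatt--Scholze theorem (\cite{BS23}). Concretely, for a local system $\bL$ one sends it to the crystal whose value on $(A,I,M)$ is $(A[I^{-1}]^\wedge_p \otimes \bL)$. This is evaluated via the perfection of $A$ and the pro-étale comparison; for the $A_{\text{inf}}(\overline{R})$-prism it recovers $T^\vee$, as described in the introduction.

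\textbf{Step 2.} For the second equivalence (Lem. 3.21 of \cite{DLMSII}), I would evaluate a Laurent $F$-crystal $\cE$ at the Breuil--Kisin log prism $(\fS_R, E, M_{\mathrm{Spf}(\fS_R)})$ and at its self-coproducts $\fS_R^{(1)}, \fS_R^{(2)}$.
\begin{itemize}
\item The crystal property gives the isomorphism $f$ via $p_1,p_2$.
\item The compatibility with $p_{12},p_{23},p_{13}$ gives the cocycle condition.
\item Frobenius compatibility comes from $\varphi_{\cE}$.
\end{itemize}
For essential surjectivity and full faithfulness, I would use that the Breuil--Kisin log prism covers the final object of the topos (the analogue of Lem. 2.8 of \cite{DLMSII} in the small affine case). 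I would also use that $p_1,p_2$ are $(p,E)$-completely faithfully flat. Together these make the Čech nerve a cover, and vector bundles on $\cO_{\Prism}[\cI^{-1}]^\wedge_p$ satisfy flat descent.

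\textbf{Main obstacle.} I expect the hardest point to be this descent. The sheaf $\cO_{\Prism}[\cI^{-1}]^\wedge_p$ is not simply a base change of $\cO_{\Prism}$, so one must check that finite projective modules over $A[I^{-1}]^\wedge_p$ satisfy descent along $p$-completely faithfully flat maps. I would first reduce mod $p^n$, where this becomes ordinary faithfully flat descent for $A/p^n[I^{-1}]$, and then pass to the limit. The limit step uses that finite projective modules over a $p$-complete ring are determined by their reductions mod $p^n$.

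Once both equivalences are in place, composing them yields the theorem. Since this result is taken verbatim from \cite{DLMSII}, the proof in the paper should simply cite it.
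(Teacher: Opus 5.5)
Your plan matches the paper: the statement is just the composite of the two equivalences in diagram \ref{triangle-diag}, namely Thm.~3.14 and Lem.~3.21 of \cite{DLMSII}, and the paper gives no argument beyond citing them. One caveat on your optional sketch of Lem.~3.21: $(p,I)$-complete faithful flatness of a general cover $A\to B$ does not by itself make $A/p^n[I^{-1}]\to B/p^n[I^{-1}]$ faithfully flat, so the descent step should rest on the stack property of $\text{Vect}(\cO_{\Prism}[\cI_{\Prism}^{-1}]^\wedge_p)$ from \cite{BS23} rather than on ordinary faithfully flat descent.
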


\noindent We use this equivalence to define finite $E$-height $\Z_p$-local system $\bL$ over $\cX$ as in Def. \ref{finiteEheightlocal system-def}.

\begin{defn}\label{BreuilKisin-relative}
(1) A pair $(\fM_R, \varphi_{\fM_R})$ is called a Breuil-Kisin module over $\fS_R$ of $E$-height $\leq r$ if $\fM_R$ is a finite torsion free $\fS_R$-module such that $\fM_R[p^{-1}]$ (resp. $\fM_R[E^{-1}]$) is projective $\fS_R[p^{-1}]$-module (resp. $\fS_R[E^{-1}]$-module), $\fM_R = \fM_R[p^{-1}] \cap \fM_R[E^{-1}]$ and $\varphi_{\fM_R}$ is $\varphi$-semilinear endomorphism such that the cokernel of its $\varphi$-linearization $\varphi_{\fM_R}^\star$ is killed by $E^r$.\smallskip

\noindent (2) A $\Z_p$-local system $\bL$ is called of finite $E$-height if the underlying étale $\varphi$-module $(\cM_R, \varphi_{\cM_R})$ of the descent data $(\cM_R, \varphi_{\cM_R}, f)$ attached to $\bL$ arises from a Breuil-Kisin module $(\fM_R, \varphi_{\fM_R})$ over $\fS_R$ of finite $E$-height i.e. $\cM_R = \cO_{\cE,R} \otimes_{\fS_R} \fM_R$.
\end{defn}

\noindent If $\bL$ is of finite $E$-height, then the associated descent data $(\cM_R, \varphi_{\cM_R}, f)$ can be written as $(\fM_R, \varphi_{\fM_R}, f)$, where $f$ is an isomorphism: $$ f :  \fS_R^{(1)}[E^{-1}]^\wedge_p \otimes_{p_1, \fS_R} \fM_R \rightarrow \fS_R^{(1)}[E^{-1}]^\wedge_p \otimes_{p_2, \fS_R} \fM_R$$ compatible with Frobenii and satisfies cocycle conditions. Therefore, the above categorical equivalence i.e. Thm. \ref{cat-eq-rel} holds between the category $\textbf{Loc}^{\text{fin}}_{\Z_p}(\cX)$ of finite $E$-height $\Z_p$-local systems (in the sense Def. \ref{BreuilKisin-relative} (2)) and the category $\textbf{DD}^{\text{fin}}_{\cO_{\cE,R}}$ of finite $E$-height descent data over $\fS_R^{(1)}[E^{-1}]^\wedge_p$. In summary, we have the following $2$-commutative diagram : 

\begin{equation}\label{diag-5-1}
\begin{tikzcd}
\textbf{DD}^{\text{fin}}_{\cO_{\cE,R}} \arrow[r, hook] \arrow[d, "\cong"] & \textbf{DD}_{\cO_{\cE,R}} \arrow[d, "\cong"] \\
\textbf{Loc}^{\text{fin}}_{\Z_p}(\cX) \arrow[r, hook]           &   \textbf{Loc}_{\Z_p}(\cX)       
\end{tikzcd}
\end{equation}

\subsection{Pullback of $\Z_p$-local system and Descent data:}\label{sec-5-2}

Let us write $X_n^0 := \mathrm{Spf}(R_n^0)$; where $$R_n^0 := \cO_{K_n} \langle X_1, X_2, \cdots, X_r, X_{r+1}^{\pm}, X_{r+2}^{\pm}, \cdots X_{b}^{\pm} \rangle/ (X_1.X_2. \dots X_r - \pi_n).$$
We have the map $\iota_n^0 : R^0 \rightarrow R_n^0 $ given by $X_i \mapsto X_i^{p^n}$ for all $1 \leq i \leq r$. Consider the $p$-completed tensor product $R_n:=R_n^0 \widehat{\otimes}_{R^0} R$; where $R_n^0$ is $R^0$-algebra via $\iota_n^0$. Write $X_n:=\mathrm{Spf}(R_n)$; so we have $X_n = X_n^0 \times_{X^0} X$. It induces a map of semistable formal schemes $\iota_n : X_n \rightarrow X$. Pulling back the natural log structure we get a map of log formal schemes $(X_n, M_{X_n}) \rightarrow (X, M_X)$. Moreover, this gives rise to a map of absolute log prismatic sites (again denoted by) $ \iota_n : (X_n, M_{X_n})_{\Prism} \rightarrow (X, M_X)_{\Prism}$.\smallskip

\noindent Denote the generic fibre of $X_n$ by $\cX_n$. Let $\bL$ be a $\Z_p$-local system over $\cX$ and $(\cM_R, \varphi_{\cM_R}, f)$ be its associated descent data in $\textbf{DD}_{\cO_{\cE,R}}$. The pullback of $\bL$ along the map $\iota_n$ gives a $\Z_p$-local system $\bL_n :=\iota_n^\star  \bL$ over $\cX_n$. Let us describe the descent data associated to $\bL_n$ by pulling back the descent data $(\cM_R, \varphi_{\cM_R}, f)$. Write $$ \fS_{R_n}^0 : = \fS_R^0 = W(k) \langle X_1, X_2, \cdots, X_r, X_{r+1}^{\pm}, X_{r+2}^{\pm}, \cdots X_{b}^{\pm} \rangle [[u]]/ (X_1.X_2. \dots X_r - u) $$ and we have the map of prisms $\iota_n^0 : (\fS_{R^0}, E, \N)^a \rightarrow (\fS_{R_n^0}, E_n, \N)^a$ defined by $u \mapsto u^{p^n}$ and $X_i \mapsto X_i^{p^n}$ for all $1 \leq i \leq r$. Now, we consider the $(p, E(u))$-completed tensor product $\fS_{R_n} :=\fS_R \widehat{\otimes}_{\fS_{R^0}} \fS_{R_n^0}$; where $\fS_{R_n^0}$ is regarded as $\fS_{R^0}$-algebra via the map $\iota_n^0$. It induces the map of prisms $\iota_n : (\fS_R, E, M_{\mathrm{Spf}(\fS_R)})^a \rightarrow (\fS_{R_n}, E_n, M_{\mathrm{Spf}(\fS_{R_n})})^a$. Similar to the pushout diagram \ref{pushout-sec-2-2}, we have a map of prisms $$\iota_n^{(1)} : (\fS_R^{(1)}, E, M_{\mathrm{Spf}(\fS_R^{(1)})})^a \rightarrow (\fS_{R_n}^{(1)}, E_n, M_{\mathrm{Spf}(\fS_{R_n}^{(1)})})^a.$$ Pullback of $(\cM_R, \varphi_{\cM_R}, f)$ along $\iota^{(1)}_n$ gives the descent data $(\cM_{R_n}, \varphi_{\cM_{R_n}}, f_n)$; where $$\cM_{R_n} = \fS_{R_n}[E_n^{-1}]^\wedge_p \otimes_{\iota_n, \fS_R[E^{-1}]^\wedge_p} \cM_R,$$ and $$ f_n :  \fS_{R_n}^{(1)}[E_n^{-1}]^\wedge_p \otimes_{p_1, \fS_{R_n}[E_n^{-1}]^\wedge_p} \cM_{R_n} \rightarrow \fS_{R_n}^{(1)}[E_n^{-1}]^\wedge_p \otimes_{p_2, \fS_{R_n}[E_n^{-1}]^\wedge_p} \cM_{R_n}$$ compatible with Frobenii and satisfies cocycle conditions over $\fS_{R_n}^{(2)}[E_n^{-1}]^\wedge_p$. By functoriality of the equivalence in Thm. \ref{cat-eq-rel}, the descent data associated to $\bL_n$ is given by $(\cM_{R_n}, \varphi_{\cM_{R_n}}, f_n)$. In particular, if $\bL$ is a finite $E$-height $\Z_p$-local system over $\cX$ with the associated descent data $(\fM_R, \varphi_{\fM_R}, f)$ then the descent data corresponding to the pullback $\bL_n$ over $\cX_n$ is given by the triplet $(\fM_{R_n}, \varphi_{\fM_{R_n}}, f_n)$; where $\fM_{R_n} : = \fS_{R_n} \otimes_{\iota_n, \fS_R} \fM_R$.\medskip

\noindent We assume that $\square$ induces bijection between the set of generic points of $X^0=\mathrm{Spf}(R^0)$ and $X=\mathrm{Spf}(R)$. Let $\{ \xi_1, \xi_2, \cdots, \xi_r\}$ be the set of generic points of all irreducible components of $X$. These points are given by $\{ (X_1), (X_2), \cdots, (X_r) \}$. For each $\xi_j$ we have the complete discrete valuation ring $\cO_{X, \xi_j}^\wedge=R_{(X_j)}^\wedge$ with uniformizer $\pi$ for all $1\leq j \leq r$. Write $\cO_{L_j}$ for $R_{(X_j)}^\wedge$ and $L_j$ for its fraction field. Let $\Delta_j := \mathrm{Spf}(\cO_{X, \xi_j})$. The localization map $ \iota_j : R \rightarrow  \cO_{L_j}$ induces the morphism of log formal schemes $ \iota_j : (\Delta_j, M_{\Delta_j}) \rightarrow (X, M_X)$, where $M_{\Delta_j}$ is the pullback log structure from $M_X$. This moreover gives the map of sites $\iota_j : (\Delta_j, M_{\Delta_j})_{\Prism} \rightarrow (X, M_X)_{\Prism}$. For each $j$, choose a Cohen ring $\cO_{L_{0,j}} \subset \cO_{L_j}$ of $\text{Frac}((\cO_{L_j})/(\pi))$ as a subring of $\cO_{L_j}$ with a lift of Frobenius $\varphi$ such that $\varphi(X_i)=X_i^p$ for $i \neq j$. Clearly, $\cO_{L_j} = \cO_{L_{0,j}}[\pi]$. Consider the ring $\fS_{L_j} := \cO_{L_{0,j}}[[u]]$ equipped with obvious Frobenius given by $u \mapsto u^p$ and $X_i \mapsto X_i^p$ for $i \neq j$. This gives a Breuil-Kisin log prism $(\fS_{L_j}, E(u), \N)^a$ over $\cO_{L_j}$. For each $j$, we have
$$ \iota_j^0 : \fS_{R^0} \rightarrow \fS_{L_j}\ \text{given by}\ X_j \mapsto u(X_1.X_2 \dots X_{j-1}X_{j+1} \dots X_r)^{-1}; u \mapsto u; X_i \mapsto X_i\ (i \neq j).$$ This lifts uniquely to $\iota_j: \fS_R \rightarrow  \fS_{L_j}$, which is flat, injective and compatible with Frobenius. This gives a map of log prisms $\iota_j : (\fS_R, E, M_{\mathrm{Spf}(\fS_R)})^a \rightarrow (\fS_{L_j}, E, \N)^a$. Similar to the case for $\iota_n$, functoriality of the equivalence in Thm. \ref{cat-eq-rel} implies that the descent data attached to the pullback $\iota_j^\star \bL$ of $\bL$ along $ \iota_j : \mathrm{Spa}(L_j, \cO_{L_j}) \rightarrow \cX$ is given by $(\cM_{L_j}, \varphi_{\cM_j}, f_j)$ in $\textbf{DD}_{\cO_{\cE, L_j}}$; where $\cM_{L_j}=\cM_R \otimes_{\fS_R[E^{-1}]^\wedge_p, \iota_j} \fS_{L_j}[E^{-1}]^\wedge_p$ and $f_j$ is isomorphism $$f_j :\fS_{L_j}^{(1)}[E^{-1}]^\wedge_p \otimes_{p_1, \fS_{L_j}} \cM_{L_j} \rightarrow \fS_{L_j}^{(1)}[E^{-1}]^\wedge_p \otimes_{p_2, \fS_{L_j}} \cM_{L_j}$$ compatible with Frobenius and satisfies cocycle conditions over $\fS_{L_j}^{(2)}[E^{-1}]^\wedge_p$ for each $1 \leq j \leq r$. Here, $(\fS_{L_j}^{(1)}, E, \N^2)^a$ and $(\fS_{L_j}^{(2)}, E, \N^3)^a$ are the self co-product and self triple-product of the Breuil-Kisin prism $(\fS_{L_j}, E, \N)^a$ in the site $(\Delta_j, M_{\Delta_j})_{\Prism}^{\text{opp}}$. In particular, if $\bL$ is a finite $E$-height $\Z_p$-local system over $\cX$ with the associated descent data $(\fM_R, \varphi_{\fM_R}, f)$ then the descent data corresponding to the pullback $\iota_j^\star \bL$ realized as a finite $E$-height $\Z_p$- representation of $\Gal_{L_j}$ (by Thm. 4.1 in \cite{DLMSII}) is given by the triplet $(\fM_{L_j}, \varphi_{\fM_{L_j}}, f_j)$; where $\fM_{L_j} : = \fS_{L_j} \otimes_{\iota_j, \fS_{R}} \fM_R$ for each $1 \leq j \leq r$. \medskip

\subsection{Proof of Main Thm. \ref{main-thm-1}:}\label{sec-5-3}

We are now ready to prove the Thm. \ref{main-thm-1}. The proof relies on Thm. \ref{mainthm-cdvf} and the purity result Thm. \ref{purity-thm}. The setup is given by $\cX=\mathrm{Spa}(R[1/p],R)$, $\cX_n=\mathrm{Spa}(R_n[1/p],R_n)$ and finite étale map $\iota_n : \cX_n \rightarrow \cX$ as in previous Subsec. \ref{sec-5-2}. \smallskip

\begin{thm}
Assume that $m: = \text{max} \{ i \in \N \cup \{0\}: \zeta_{p^i} \in K^{un} \}$. Let $\bL$ be a finite $E$-height $\Z_p$-local system over $\cX$. Then its pullback $\bL_m:=\iota_m^\star \bL$ of $\bL$ along the finite étale cover $\iota_m : \cX_m \rightarrow \cX$ is semistable $\Z_p$-local system over $\cX_m$. 
\end{thm}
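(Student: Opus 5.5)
The plan is to reduce to the CDVR case through the purity theorem, Thm. \ref{purity-thm}, applied to the semistable formal scheme $X_m$ over $\cO_{K_m}$ rather than to $X$. First I check that $X_m=\mathrm{Spf}(R_m)$ satisfies the hypotheses of the small affine setup with $K$ replaced by $K_m$ and $\pi$ by $\pi_m$. Indeed, $R_m=R_m^0\widehat{\otimes}_{R^0}R$ is $p$-completely étale over $R_m^0$, by base change of $\square$. Its generic points are the primes $(X_j)$ for $1\le j\le r$, and they correspond bijectively to those of $X$, because $X_j\mapsto X_j^{p^m}$ preserves the components $X_j=0$. By Thm. \ref{purity-thm} over $K_m$, it then suffices to show that for each $j$ the pullback of $\bL_m$ to $\mathrm{Spa}(L_{m,j},\cO_{L_{m,j}})$ is semistable as a $\Gal_{L_{m,j}}$-representation. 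Here $L_{m,j}=\mathrm{Frac}(\cO^\wedge_{X_m,\xi_j})$.

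The second step identifies the field $L_{m,j}$ and the restriction. Localizing $R_m$ at $(X_j)$ and completing, I want to show that $\cO^\wedge_{X_m,\xi_j}$ is the CDVR $\cO_{L_j}(\pi_m)=\cO_{L_j}\otimes_{\cO_K}\cO_{K_m}$. This should follow because the units $X_i$ with $i\neq j$ have $p^m$-th roots adjoined. That adjunction changes only the imperfect residue field by a purely inseparable extension, which keeps the $p$-basis finite. The element $X_j$ is then determined by $X_j=\pi_m\prod_{i\ne j}X_i^{-1}$. If the residue field changes, I will replace the chosen Cohen ring accordingly; the resulting field is still a CDVF with finite $p$-basis, which is all that Thm. \ref{mainthm-cdvf} needs. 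The commutative square of formal schemes $\Delta_{m,j}\to X_m$, $\Delta_j\to X$ then gives $\iota_{m,j}^\star\bL_m\cong(f_j^\star\bL)|_{\Gal_{L_{m,j}}}$ as Galois representations, by functoriality of pullback of local systems on the generic fibres.

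The third step applies the CDVR result. From Subsec. \ref{sec-5-2}, $f_j^\star\bL$ is a finite $E$-height representation of $\Gal_{L_j}$, with Breuil--Kisin module $\fS_{L_j}\otimes_{\iota_j,\fS_R}\fM_R$ and descent data $f_j$. The number $m$ is unchanged when passing from $K$ to $L_j$, as noted before Lemma \ref{intersection-equality}. Thm. \ref{mainthm-cdvf} therefore gives that $(f_j^\star\bL)|_{\Gal_{L_j(\pi_m)}}$ is semistable. If $L_{m,j}$ is only a finite unramified-in-$\pi$ extension of $L_j(\pi_m)$ coming from the $X_i^{1/p^m}$, I will use that semistability is preserved under restriction to such extensions. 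This follows from Thm. \ref{st-dd} and base change of the integral descent data, or directly from the admissibility formalism. Combining this with purity yields the semistability of $\bL_m$.

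I expect the main obstacle to be the second step: computing $\cO^\wedge_{X_m,\xi_j}$ exactly and matching the prismatic data. Specifically, I must show that the Breuil--Kisin prism $(\fS_{L_{m,j}},E_m,\N)^a$ is compatible with both $\iota_m$ and $\iota_j$, so that the pulled-back descent data agree, as in the commutative squares (\ref{square-sec-2-3}) and (\ref{square1-sec-2-3}). I will first try to carry this out at the level of Galois representations, which avoids comparing prisms and only needs functoriality of Thm. \ref{cat-eq-rel}. I will fall back on explicit prism maps only if needed.
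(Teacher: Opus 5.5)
Your route is the paper's: purity (Thm.~\ref{purity-thm}) applied to $X_m$ over $\cO_{K_m}$, then identification of the pullback of $\bL_m$ at each generic point with a restriction of $f_j^\star\bL$, then Thm.~\ref{mainthm-cdvf} at each $L_j$. Working with Galois representations and functoriality of pullback on generic fibres, instead of the square of coproducts, is a harmless simplification.

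The one real difference is the completed local ring. Your fallback there is mandatory, and you should drop the first-choice claim of your second step, $\cO^\wedge_{X_m,\xi_j}=\cO_{L_j}\otimes_{\cO_K}\cO_{K_m}$, which is false as soon as $r\ge 2$. Since $\iota^0_m$ raises \emph{all} of $X_1,\dots,X_r$ to the $p^m$-th power, the residue field of $\cO^\wedge_{X_m,\xi_j}$ is that of $\cO_{L_j}$ with $X_i^{1/p^m}$ adjoined for $i\le r$, $i\ne j$. Hence $e=p^m$, $f=p^{m(r-1)}$ and $[L_{m,j}:L_j]=p^{mr}$, which matches the generic rank of $R_m$ over $R$. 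By contrast, $[L_j(\pi_m):L_j]=p^m$. So $L_{m,j}/L_j(\pi_m)$ has $e=1$ and a purely inseparable residue extension; it is not unramified in the usual sense.

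The paper's own proof sets $L_{j,m}=L_j(\pi_m)$ and $\fS_{L_{j,m}}=\fS_{L_j}$ with $u\mapsto u^{p^m}$. For $r\ge 2$ the square (\ref{square1-sec-5-3}) then cannot commute: it would require a $p^m$-th root of $X_i$ ($i\ne j$, $i\le r$) in $\cO_{L_{0,j}}[[u]]$, and none exists even modulo $(p,u)$. Your extra restriction from $L_j(\pi_m)$ to $L_{m,j}$ is therefore exactly what makes the argument correct in the genuinely semistable case. For $r=1$ the two fields agree.

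For that extra step, the cleanest justification stays inside the paper's framework. Write $T_j|_{\Gal_{L_j(\pi_m)}}$ as the étale realization of an analytic prismatic $F$-crystal on the log prismatic site of $\cO_{L_j(\pi_m)}$, with log structure from $\pi_m$. Restrict that crystal along the morphism of log formal schemes $\mathrm{Spf}\,\cO_{L_{m,j}}\to\mathrm{Spf}\,\cO_{L_j(\pi_m)}$; the uniformizer $\pi_m$ is common to both. The étale realization of Def.~\ref{etale-realization} and the equivalence with local systems are compatible with pullback, so the restricted crystal realizes $T_j|_{\Gal_{L_{m,j}}}$.

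If you prefer explicit descent data via Thm.~\ref{st-dd}, use the map of Breuil--Kisin log prisms
\[
\cO_{L_{0,j}}[[u]]\to\bigl(\cO_{L_{0,j}}[Y_i]/(Y_i^{p^m}-X_i)\bigr)[[u]],\qquad u\mapsto u,
\]
which preserves $E_m$. You must then also prove that base change along this map corresponds to restriction of the Galois action, the analogue of Lem.~\ref{func-imperf} and Prop.~\ref{res-imperf}; the paper establishes this only for $\iota_n$ and $\iota_g$. I would not rely on a one-line appeal to ``the admissibility formalism'' here. The period rings in the imperfect residue field case are built from a Cohen ring of the base, and that Cohen ring changes along this inseparable residue extension, so that route needs its own comparison.
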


\begin{proof}

\noindent For each $j$, we have $\cO_{L_{m,j}}:= (R_m)^\wedge_{(X_j)}=R_{(X_j)}^\wedge \widehat{\otimes}_R R_m$. Let us write $\iota_{j,m} : (\fS_{L_j}, E, \N)^a \rightarrow (\fS_{L_{j,m}}, E_m, \N)^a$ with $\fS_{L_{j,m}}=\fS_{L_j}$ defined by $u \mapsto u^{p^m}$ for each $j$. Similar to diagram \ref{square-sec-2-3}, we have:

\begin{equation}
\begin{tikzcd}
(\fS_R, E, M_{\mathrm{Spf}(\fS_R)})^a \arrow[r, "\iota_m"] \arrow[d, " \iota_j"] & (\fS_{R_m}, E_m, M_{\mathrm{Spf}(\fS_{R_m})})^a \arrow[d, "\iota_j"] \\
(\fS_{L_j}, E, \N)^a \arrow[r, "\iota_{j,m}"]           & (\fS_{L_{j,m}}, E_m, \N)^a          
\end{tikzcd} 
\end{equation} 

\noindent This commutativity and universal property of respective pushout diagrams yields the following commutative square analogous to diagram \ref{square1-sec-2-3} for each $j \in \{ 1, 2, \cdots, r\}$:
\begin{equation}\label{square1-sec-5-3}
\begin{tikzcd}
(\fS_{R}^{(1)}, E, M_{\mathrm{Spf}(\fS_{R}^{(1)})})^a \arrow[r, "\iota^{(1)}_m"] \arrow[d, "\iota_j^{(1)}"] & (\fS_{R_m}^{(1)}, E_m, M_{\mathrm{Spf}(\fS_{R_m}^{(1)})})^a \arrow[d, " \iota^{(1)}_{m,j}"] \\
(\fS^{(1)}_{L_j}, E, \N^2)^a \arrow[r, " \iota^{(1)}_{j,m}"]           & (\fS_{L_{j,m}}^{(1)}, E_m, \N^2)^a          
\end{tikzcd}
\end{equation}

\noindent Let $\bL$ be a finite $E$-height $\Z_p$-local system in $\textbf{Loc}_{\Z_p}(\cX)$. It corresponds to descent data $(\fM_R, \varphi_{\fM_R}, f)$ over $\fS^{(1)}_R[E^{-1}]^\wedge_p$. For each generic point $\xi_j$, the pullback along $\iota_j : R \rightarrow \cO_{L_j}$ gives rise to a finite $E$-height $\Z_p$-representation $T_j :=\iota_j^\star \bL$ of $\Gal_{L_j}$ with associated descent data $(\fM_{L_j}, \varphi_{\fM_{L_j}}, f_j)$ over $\fS_{L_j}^{(1)}[E^{-1}]^\wedge_p$. By Thm. \ref{mainthm-cdvf}, the restriction $T_j|_{\Gal_{L_{j,m}}}$ is semistable, where $L_{j,m}=L_j (\pi_m)$. The descent data corresponding to $T_j|_{\Gal_{L_{j,m}}}$ is the pullback of $(\fM_{L_j}, \varphi_{\fM_{L_j}}, f_j)$ along $\iota^{(1)}_{j,m} : (\fS^{(1)}_{L_j}, E, \N^2)^a \rightarrow  (\fS_{L_{j,m}}^{(1)}, E_m, \N^2)^a$ and given by $(\fM_{L_{j,m}}, \varphi_{\fM_{L_{j,m}}}, (f_j)_m)$; where $\fM_{L_{j,m}} = \fS_{L_{j,m}} \otimes_{\iota_m, \fS_{L_j}} \fM_{L_j}$ and $(f_j)_m$ is isomorphism $$ (f_j)_m : \fS_{L_{j,m}}^{(1)} \otimes_{p_1, \fS_{L_{j,m}}} \fM_{L_{j.m}} \rightarrow \fS_{L_{j,m}}^{(1)} \otimes_{p_2, \fS_{L_{j,m}}} \fM_{L_{j,m}} $$ compatible with Frobenii and satisfies cocycle conditions over $\fS_{L_{j,m}}^{(2)}$. The descent data associated to $\bL_m$ is given by $(\fM_{R_m}, \varphi_{\fM_{R_m}}, f_m)$. Using the commutativity of the diagram \ref{square1-sec-5-3}, we can see that the pullback of $(\fM_{R_m}, \varphi_{\fM_{R_m}}, f_m)$ along $\iota^{(1)}_{m,j} : (\fS^{(1)}_{R_m}, E_m,  M_{\mathrm{Spf}(\fS_{R_m}^{(1)})})^a \rightarrow  (\fS_{L_{j,m}}^{(1)}, E_m, \N^2)^a$ coincides with $(\fM_{L_{j,m}}, \varphi_{\fM_{L_{j,m}}}, (f_j)_m)$, which is semistable. Therefore, $\bL_m$ is a $\Z_p$-local system in $\textbf{Loc}_{\Z_p}(\cX_m)$ such that its pullback at each generic point, realized as $\Z_p$-representation of $\Gal_{L_{j,m}}$, is semistable. Hence, by purity result Thm. \ref{purity-thm}, $\bL_m \in \textbf{Loc}_{\Z_p}(\cX_m)$ is semistable.\smallskip

\end{proof}

\end{document}